\documentclass[letterpaper,11pt]{article}
\usepackage{dsfont}
\usepackage[utf8]{inputenc}
\usepackage[margin=1in,footskip=0.25in]{geometry}
\usepackage{mystyle}

\begin{document}

	\title{On the Convergence of Interior-Point Methods for Bound-Constrained Nonlinear Optimization Problems with Noise}

\author{Shima Dezfulian\thanks{(\url{dezfulian@u.northwestern.edu}) Department of Industrial Engineering and Management Sciences, Northwestern University
		University} \and Andreas W\"achter \thanks{(\url{andreas.waechter@northwestern.edu})  Department of Industrial Engineering and Management Sciences, Northwestern University}}
	\date{}
	\maketitle
 
\begin{abstract}
We analyze the convergence properties of a modified barrier method for solving bound-constrained optimization problems where evaluations of the objective function and its derivatives are affected by  bounded and non-diminishing noise.  The only modification compared to a standard barrier method is a relaxation of the Armijo line-search condition.
We prove that the algorithm generates iterates at which the size of the barrier function gradient eventually falls below a threshold that converges to zero if the noise level converges to zero.
Based on this result, we propose a practical stopping test that does not require estimates of unknown problem parameters and identifies iterations in which the theoretical threshold is reached.
We also analyze the local convergence properties of the method when noisy second derivatives are used.  Under a strict-complementarity assumption, we show that iterates stay in a neighborhood around the optimal solution once it is entered.  The neighborhood is defined in a scaled norm that becomes narrower for variables with active bound constraints as the barrier parameter is decreased.  As a consequence, we show that active bound constraints can be identified despite noise.
Numerical results demonstrate the effectiveness of the stopping test and illustrate the active-set identification properties of the method.
\end{abstract}

\begin{keywords}
	Interior-point method, Noisy optimization, Nonlinear Optimization, Nonconvex Optimization
\end{keywords}

\begin{AMS}
	65K05, 49M37, 90C53, 90C30, 90C51 
\end{AMS}

\section{Introduction}\label{sec.intro}
Interior-point methods are some of the most powerful techniques for solving nonlinear optimization problems with general equality and inequality constraints. They enjoy convergence guarantees in the non-noisy setting under conditions often satisfied in practice. However, when applied to problems where the function or constraints can only be evaluated with noise, their performance can be erratic. It is therefore natural to ask if the interior-point framework can be adapted to the noisy setting or if it is intrinsically unsuitable in this case. 

In this paper, as a first step towards understanding the impact of noise in the interior-point framework, we focus on bound-constrained optimization problems
\begin{equation}\label{eq.orig_bnd_const_prob}
	\min_{x \in \RR^n}\ f(x) \ \ \st \ \ x \geq 0,
\end{equation}
where $f: \RR^n \to \RR$ is continuously differentiable. 
We consider the standard barrier approach, where the only modification is a natural relaxation of the Armijo line-search condition.

In this paper, we assume that noise is persistent and cannot be diminished, in other words, it is outside the control of the user. Therefore, the convergence to the exact solution cannot be guaranteed and we present instead convergence results to neighborhoods with small barrier function gradients.  
We also assume that an estimate of the noise level in the function is available. We argue that this is natural since knowledge of the noise level is necessary to determine if a problem is satisfactorily solved and is required to ensure that certain computations within the algorithm are not contaminated by noise.

Applications of nonlinear programming in which the function and/or constraints contain
noise abound. We refer the reader to the examples given in \cite{ng2014multifidelity} involving robust design in the presence of stochastic noise, and to \cite{more2011estimating} for examples involving (deterministic) computational error. Interest in solving noisy nonlinear problems has been receiving increasing attention \cite{berahas2019derivative, berahas2021global, berahas2021sequential, 
	curtis2021inexact,
	curtis2023sequential,  curtis2019stochastic, na2023inequality, oztoprak2023constrained, paquette2020stochastic, qiu2023sequential, sun2023trust, xie2020analysis}.

\subsection{Contributions}
Our global convergence results are based on \cite{berahas2019derivative} which analyzes the behavior of line-search methods for unconstrained optimization with relaxed Armijo conditions in the presence of bounded noise.  The analysis in \cite{berahas2019derivative} does not immediately apply in our setting because the required assumption that the objective function is bounded is violated by the $\log$-barrier term. In addition, the analysis in \cite{berahas2019derivative} focuses on the gradient descent method, while we permit more general choices of the Hessian matrix, including second-derivatives.
Our global convergence result is stated similarly to that in \cite{oztoprak2023constrained} and shows that eventually the stationary measure (the barrier objective gradients in our setting) must become small.  The thresholds in \cite{oztoprak2023constrained} are defined in terms of unknown problem-dependent quantities and cannot be used as a practical stopping test.
Overcoming this deficiency, we present a novel termination test that does not depend on unknown parameters and permits the user to recognize when the algorithm reaches the level of accuracy predicted by the global analysis.  The effectiveness of this test is confirmed with numerical experiments. 

Our local convergence analysis shows that locally the iterates converge at a quadratic-linear-type rate when noisy second-derivatives are used for a non-degenerate instance, similar to \cite{kelley2022newton}.  However, the analysis in \cite{kelley2022newton} is not immediately applicable because the Hessian of the barrier function becomes unbounded as variables approach their bounds.  We overcome this by utilizing a scaled norm, and we also prove the existence of a neighborhood around an optimal solution inside which the iterates are confined once it is entered.  
The particular scaling also allows us to argue that the method is able to identify which of the bound constraints are active at the solution as the barrier parameter goes to zero, despite the presence of noise.  We present numerical results that demonstrate the identification of the active set.

\subsection{Literature Review}
Nonlinear optimization problems in the presence of noise have received increasing interest in recent years. In particular, recent research has focused on designing noise-tolerant algorithms based on certain assumptions about noise and its properties. Many of these studies aim to modify existing algorithms to obtain convergence guarantees. 

Unconstrained problems with noisy function and gradient evaluations are studied in \cite{berahas2019derivative,  berahas2021global, blanchet2019convergence, cao2023first, curtis2019stochastic, paquette2020stochastic, scheinberg2022stochastic, shi2022noise, sun2023trust, xie2020analysis}. They can be grouped into two categories: those that consider stochastic noise \cite{berahas2021global, blanchet2019convergence, cao2023first, curtis2019stochastic, paquette2020stochastic, scheinberg2022stochastic} and those that consider bounded and non-diminishing noise \cite{berahas2019derivative, shi2022noise, sun2023trust, xie2020analysis}. Since our assumptions align with the second category we provide a brief overview of them here. In \cite{berahas2019derivative}, linear convergence of the gradient-descent algorithm with a relaxed Armijo line-search to a neighborhood of the solution for non-noisy problems is proved when the objective function is strongly convex. 
In \cite{sun2023trust}, a modification of the trust-region algorithm that relaxes the numerator and denominator of the actual-to-predicted-reduction ratio is introduced. This modification guarantees  convergence to a neighborhood of the non-noisy solution. In \cite{xie2020analysis}, a modification of  the BFGS algorithm with a specialized Armijo-Wolfe line-search is proposed and convergence guarantees to a neighbourhood of the solution are provided.

Noisy equality-constrained optimization problems are studied in \cite{berahas2021sequential, curtis2021inexact, oztoprak2023constrained}. In \cite{berahas2021sequential}, it is assumed that the evaluations of the objective function and its gradient are contaminated with a stochastic noise; however, equality constraint evaluations are deterministic. In this context, a modification of the line-search SQP algorithm is proposed that replaces the traditional line search with a step-size selection scheme based on Lipschitz constants or their approximations. The algorithm achieves global convergence, in expectation, to a local solution.  The work in \cite{curtis2021inexact} considers a similar problem setting as \cite{berahas2021sequential}; however, it allows inexact solutions for the subproblems. More relevant to our context, \cite{oztoprak2023constrained} considers the setting where the objective function, equality constraints, and all gradients are affected by a non-diminishing bounded noise. The authors propose a modification of the line-search SQP algorithm that relaxes the Armijo condition \cite{nocedal1999numerical}, similar to \cite{berahas2019derivative}. Convergence to a neighborhood of the solution is shown under the assumption that first-order steps are taken.

Noisy optimization methods for solving problems with both inequality  and equality constraints are studied in \cite{curtis2023sequential, na2023inequality, qiu2023sequential}, where noise is assumed to be stochastic. All of these works assume noisy objective function and deterministic constraint evaluations and propose a modification of the SQP algorithm.  Convergence in expectation \cite{curtis2023sequential} and almost sure convergence \cite{na2023inequality, qiu2023sequential} are established in this setting.

In this paper, we aim to design a noise-tolerant interior-point algorithm. In the deterministic setting, interior-point methods have been extensively studied and proven successful for solving very large-scale nonlinear optimization problems with equality and inequality constraints.
As one of the first references, \cite{yamashita1998globally} considers a primal-dual line-search interior-point algorithm and provides global convergence to a local solution. Trust-region interior-point methods are studied, for example, in  \cite{byrd2000trust, byrd1999interior, yamashita2005globally} and a convergence proof is provided in \cite{byrd2000trust, yamashita2005globally}. A line-search filter interior-point algorithms are studied in \cite{wachter2005line, wachter2006implementation} and convergence proof of such algorithms is provided in \cite{wachter2005line}. 

In a setting with stochastic noise, an interior-point algorithm for solving bound-constrained problems is proposed and analyzed \cite{curtis2023stochastic}. The algorithm proposed in \cite{curtis2023stochastic} deviates  more significantly from the standard interior-point framework than our work, as it does not employ the fraction-to-the-boundary rule; instead, it suggests a strategy that maintains the iterates within an inner neighborhood of the feasible region.

\subsection{Notation}\label{sec:notation}
The sets of real numbers, $n$-dimensional real vectors, and $n$-by-$m$-dimensional real matrices are denoted by $\RR$, $\RR^n$, and $\RR^{n \times m}$, respectively.  The sets of nonnegative and positive real numbers are denoted by $\RR_{\geq0}$ and $\RR_{>0}$, respectively. The set of nonnegative integers is denoted by $\NN := \{0,1,\dots\}$ and we define $[n] := \{1,\dots,n\}$ for any $n \in \NN \setminus \{0\}$.  The identity matrix is denoted as $I$, and the vector of all ones is denoted as $\mathbf{e}$, where in each case the size of
the object is determined by the context. We write $v_{i} \in \RR$ to denote the $i$th element of a vector $v \in \RR^n$ for all $i \in [n]$ and $v_k \in \RR^n$  to denote the value of vector $v \in \RR^n$ in iteration $k \in \NN$ of the algorithm. In addition, $v_{k, i} \in \RR$ denotes the $i$th element of vector $v_k \in \RR^n$ in iteration $k \in \NN$ of the algorithm, for all $i \in [n]$. For any vector $v \in \RR^n$ we write $V \in \RR^{n \times n}$ to denote $\text{diag}(v)$. The $\ell_2$-norm of any vector $v \in \RR^n$ is denoted by $\| v\|$ and the induced $2$-norm of any matrix $M \in \RR^{m \times n}$ is denoted by $\| M \|$. The minimum and maximum  eigenvalues of a matrix $M \in \RR^{n \times n}$ are respectively denoted by $\sigma_{\min}(M)$ and $\sigma_{\max}(M)$. 
Given two symmetric matrices $M_1$ and $M_2$ in $\RR^{n \times n}$, we write $M_1 \succ M_2$ and $M_1  \succeq M_2$ to denote $M_1 - M_2$ is positive definite and positive semi-definite, respectively. For a positive definite matrix $M \succ 0$ and a vector $v$, we use $\| v \|_M $ to denote $\sqrt{v^T M v}$.
If $v_1\in\RR^{n_1}$ and $v_2\in\RR^{n_2}$, we write $(v_1,v_2)$ to denote $(v_1^T,v_2^T)^T$. 
A fraction with a positive, finite numerator and a zero denominator evaluates to $\infty$.

\subsection{Organization}
The formulation of the barrier subproblem and an algorithm for solving instances with noisy function evaluations are presented in Section~\ref{sec.barrier_problem}. The global convergence analysis for solving the barrier subproblem is presented in Section~\ref{sec.global_conv}, and local convergence is analyzed in Section~\ref{sec.local_conv}.  Numerical experiments are discussed in Section~\ref{sec.numerical_exp} and concluding remarks are given in Section~\ref{sec.conclusion}.

\section{An interior point method with noisy function evaluations} 
\label{sec.barrier_problem}
The analysis of interior-point methods for problems with general equality and inequality constraints is rather complex due (in part) to the complicated mechanisms of globalization techniques based on penalty functions or filters.  Thus, in this paper, we consider
the simpler setting of bound-constrained optimization \eqref{eq.orig_bnd_const_prob}, where the main design questions can be studied more directly and the numerical results are easier to analyze. In this case, the globalization mechanism reduces to a line search for the barrier objective function.

\subsection{Barrier methods}

In order to solve problem~\eqref{eq.orig_bnd_const_prob} using an interior-point or barrier algorithm, we define the \emph{barrier subproblem}  
\begin{align}
	\label{eq.barrier_bnd_const}  
	\min_{x \in \RR^n} \ \  \varphi^{\mu}(x) := \,  f(x) - \mu \sum_{i = 1}^n \log(x_i),
\end{align}
for a given barrier parameter $\mu \in \RR_{>0}$.  The main idea behind interior-point methods is to solve, often only to a certain tolerance,  a sequence of barrier problems \eqref{eq.barrier_bnd_const} for a decreasing sequence of barrier parameters that converges to zero.

Before introducing our algorithm that is able to handle the situation in which only noisy estimates of $f$ and its derivatives are available, let us first recall some basic facts about barrier methods.
We start with the primal-dual formulation of the necessary first-order optimality conditions for the barrier problem \eqref{eq.barrier_bnd_const}.
\begin{theorem}[\cite{nocedal1999numerical}]
	\label{theorem:ness_optcond}
	Suppose $f$ is continuously differentiable and let $\mu \in \RR_{>0}$. 
	If $x_*^\mu\in \RR_{>0}^n$ is a local minimizer of \eqref{eq.barrier_bnd_const}, then there exists a vector $z^\mu_* \in \RR_{>0}^n$ so that
	\begin{align}
		\label{eq.kktmu}
		\begin{split}
			\nabla f(x) - z =&\; 0, \\
			Xz =&\; \mu \mathbf{e},
		\end{split}
	\end{align}
	holds with $x=x_*^\mu$ and $z=z_*^\mu$.
\end{theorem}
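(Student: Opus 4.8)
The plan is to derive this from the classical first-order (KKT) necessary conditions for the unconstrained minimization problem~\eqref{eq.barrier_bnd_const}. Since $x_*^\mu \in \RR_{>0}^n$ is a local minimizer and the objective $\varphi^\mu$ is differentiable on the open set $\RR_{>0}^n$ (here we use that $f$ is continuously differentiable and that $-\mu\log(x_i)$ is differentiable for $x_i > 0$), the point $x_*^\mu$ is an interior local minimizer of a differentiable function, so Fermat's rule applies: $\nabla \varphi^\mu(x_*^\mu) = 0$.

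The key computational step is to evaluate this gradient explicitly. Differentiating $\varphi^\mu(x) = f(x) - \mu\sum_{i=1}^n \log(x_i)$ term by term gives
\begin{align*}
	\nabla \varphi^\mu(x) = \nabla f(x) - \mu X^{-1}\mathbf{e},
\end{align*}
where $X^{-1} = \operatorname{diag}(1/x_1,\dots,1/x_n)$ is well-defined precisely because $x_*^\mu > 0$. Setting this to zero at $x = x_*^\mu$ yields $\nabla f(x_*^\mu) = \mu X^{-1}\mathbf{e}$, i.e.\ componentwise $\nabla f(x_*^\mu)_i = \mu/x_{*,i}^\mu$ for all $i \in [n]$.

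It then remains only to introduce the dual variable. Define $z_*^\mu := \mu X^{-1}\mathbf{e}$, equivalently $z_{*,i}^\mu := \mu/x_{*,i}^\mu$ for each $i$. Since $\mu > 0$ and $x_{*,i}^\mu > 0$, we have $z_*^\mu \in \RR_{>0}^n$ as required. With this definition, $\nabla f(x_*^\mu) - z_*^\mu = 0$ is exactly the first equation of~\eqref{eq.kktmu}, and multiplying $z_{*,i}^\mu = \mu/x_{*,i}^\mu$ through by $x_{*,i}^\mu$ gives $x_{*,i}^\mu z_{*,i}^\mu = \mu$ for all $i$, i.e.\ $X z = \mu\mathbf{e}$ with $x = x_*^\mu$, $z = z_*^\mu$, which is the second equation.

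There is essentially no obstacle here: the only subtlety worth flagging is that the argument relies on $x_*^\mu$ lying in the \emph{open} positive orthant, so that the logarithmic terms are differentiable and the unconstrained optimality condition $\nabla\varphi^\mu(x_*^\mu)=0$ is legitimate — this is built into the hypothesis $x_*^\mu \in \RR_{>0}^n$. (One could alternatively phrase problem~\eqref{eq.barrier_bnd_const} as a minimization over $\RR_{>0}^n$ and invoke the same interior-stationarity fact.) The positivity of $z_*^\mu$ is then automatic rather than an extra assumption, which is the characteristic feature that makes the barrier reformulation attractive.
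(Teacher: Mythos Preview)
Your proof is correct and is the standard argument. The paper does not actually give its own proof of this statement: Theorem~\ref{theorem:ness_optcond} is stated with a citation to \cite{nocedal1999numerical} and no proof is provided, so there is nothing to compare against beyond noting that your derivation is exactly the elementary one a textbook would give.
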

From this, one concludes $z_*^\mu = \mu( X_*^{\mu})^{-1}\mathbf{e}$.
The following assumption and theorem states sufficient second-order optimality conditions for the \emph{original} problem \eqref{eq.orig_bnd_const_prob}, as well as the local existence of a central path. 
\begin{assumption}\label{as:suff_optcond}
	$f$ is twice continuously differentiable.
	Let $x_*^0\in \RR_{\geq0}^n$ and $z_*^0\in \RR_{\geq0}^n$ be such that (i) \eqref{eq.kktmu} holds for $\mu=0$, $x=x_*^0$, and $z=z_*^0$; (ii) strict complementarity holds, i.e., $x_*^0+z_*^0>0$ for each $i\in[n]$; and (iii) the reduced Hessian $H_{*, \III\III}^0$---with $H_*^0=\na^2 f\left(x_*^0\right)$ and $\III = \{i\in[n]: x_{*, i}^0>0\}$---is positive definite, i.e., $\sigma_l^H=\sigma_{\min}(H_{*, \III\III}^0) >0$.
\end{assumption}

For later reference, we also define $\AAA=[n]\setminus\III$.

\begin{theorem}[\cite{nocedal1999numerical}]
	\label{theorem:suff_optcond}
	Suppose Assumption~\ref{as:suff_optcond} holds. Then $x_*^0$ is a strict local minimizer of \eqref{eq.orig_bnd_const_prob} .
\end{theorem}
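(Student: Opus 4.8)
The plan is to argue by contradiction via the standard normalized-direction-sequence argument, specialized from the second-order sufficiency proof for general nonlinear programs (cf.~\cite{nocedal1999numerical}) to the bound constraints $x\geq 0$. Suppose $x_*^0$ is \emph{not} a strict local minimizer of~\eqref{eq.orig_bnd_const_prob}. Then there is a sequence $\{x_k\}\subset\RR_{\geq0}^n$ with $x_k\to x_*^0$, $x_k\neq x_*^0$, and $f(x_k)\leq f(x_*^0)$ for all $k$. I would write $x_k=x_*^0+t_k d_k$ with $t_k:=\|x_k-x_*^0\|\to 0^+$ and $\|d_k\|=1$, and pass to a subsequence so that $d_k\to d$ with $\|d\|=1$. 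For $i\in\AAA$ we have $x_{*,i}^0=0$, so $x_{k,i}=t_k d_{k,i}\geq 0$ forces $d_{k,i}\geq 0$, and hence $d_i\geq 0$.

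Next I would extract the first-order information using strict complementarity. By Assumption~\ref{as:suff_optcond}(i), $\nabla f(x_*^0)=z_*^0$ with $X_*^0 z_*^0=0$, so $z_{*,i}^0=0$ for $i\in\III$; by Assumption~\ref{as:suff_optcond}(ii), $z_{*,i}^0>0$ for $i\in\AAA$. Hence $\nabla f(x_*^0)^T d_k=\sum_{i\in\AAA}z_{*,i}^0 d_{k,i}\geq 0$. A second-order Taylor expansion of $f$ at $x_*^0$ together with $f(x_k)\leq f(x_*^0)$ gives $t_k\nabla f(x_*^0)^T d_k+\frac{1}{2}t_k^2 d_k^T H_*^0 d_k+o(t_k^2)\leq 0$. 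Dividing by $t_k$ and letting $k\to\infty$ yields $\sum_{i\in\AAA}z_{*,i}^0 d_i\leq 0$, and since every term is nonnegative with $z_{*,i}^0>0$ this forces $d_i=0$ for all $i\in\AAA$, i.e., $d=(d_\III,0)$ with $\|d_\III\|=1$. Returning to the Taylor inequality and using $\nabla f(x_*^0)^T d_k\geq 0$ to drop that (nonnegative) term, I get $\frac{1}{2}t_k^2 d_k^T H_*^0 d_k+o(t_k^2)\leq 0$; dividing by $t_k^2$ and passing to the limit (using $d_{k,\AAA}\to 0$, so the mixed and active-block contributions of $d_k^T H_*^0 d_k$ vanish) gives $d_\III^T H_{*,\III\III}^0 d_\III\leq 0$. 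Since $H_{*,\III\III}^0\succ 0$ and $\|d_\III\|=1$, this contradicts Assumption~\ref{as:suff_optcond}(iii), which is the desired contradiction.

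I expect the only delicate point to be pushing the limiting direction into the reduced space $\III$: one must invoke strict complementarity so that the first-order condition annihilates exactly the active components of $d$, making the positive definiteness of the reduced Hessian applicable. Without strict complementarity some active-bound multipliers could vanish, allowing $d$ to retain a nonzero active component along which no curvature information is available, and the argument would break down. The remaining ingredients---feasibility of $d$ on $\AAA$, the two-stage division (first by $t_k$, then by $t_k^2$), and the vanishing of the cross terms in the limit---are routine.
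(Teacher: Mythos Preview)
Your proof is correct and follows the standard second-order sufficiency argument specialized to bound constraints. Note that the paper does not actually supply its own proof of this theorem: it is stated as a cited result from \cite{nocedal1999numerical} with no accompanying argument. Your argument is essentially the textbook proof (contradiction via a normalized direction sequence, strict complementarity to push the limiting direction into the inactive subspace, then positive definiteness of the reduced Hessian), so there is nothing to contrast.
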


\begin{theorem}[\cite{wright2002properties}]
	\label{thm:central_path}
	Suppose Assumption~\ref{as:suff_optcond} holds. Then there exists $\hat\mu \in \RR_{>0}$ and a continuously differentiable primal-dual path $\phi(\mu):[0,\hat\mu]\to\RR^{n}\times\RR^{n}$ so that $\phi(\mu)=(x_*^\mu,z_*^\mu)$ where $x_*^\mu$ and $z_*^\mu$ satisfy \eqref{eq.kktmu}  with $x=x_*^\mu$ and $z=z_*^\mu$ for all $\mu\in[0,\hat \mu]$.
	Furthermore, there exists $\theta_\mu\in(0,1)$ such that for all $i\in\III$ and $\mu\in[0,\hat \mu]$
	\begin{equation*}
		\theta_\mu\, \mu \leq |x_{*, i}^{\mu} - x_{*, i}^0 | \leq \frac{1}{\theta_\mu} \mu,
	\end{equation*}
	where $x_{*, i}^{\mu}$ denotes the $i$th element of $x_*^{\mu}$.
\end{theorem}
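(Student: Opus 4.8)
The plan is to realize $\phi$ as a branch of solutions of the primal--dual system
\[
F(x,z,\mu):=\bigl(\nabla f(x)-z,\ Xz-\mu\mathbf{e}\bigr)=0
\]
emanating from $(x_*^0,z_*^0,0)$ by means of the implicit function theorem, and then to read the two-sided estimate off a first-order expansion of $\phi$ at $\mu=0$. First I would check that the Jacobian of $F$ with respect to $(x,z)$, namely $J=\left(\begin{smallmatrix}\nabla^2 f(x)&-I\\ Z&X\end{smallmatrix}\right)$, is nonsingular at $(x_*^0,z_*^0)$. Strict complementarity (Assumption~\ref{as:suff_optcond}(ii)) together with $X_*^0z_*^0=0$ forces $z_{*,i}^0=0$ for $i\in\III$ and $x_{*,i}^0=0$ for $i\in\AAA$; hence if $(u,v)$ is in the kernel of $J$ at that point, the complementarity block $Z_*^0u+X_*^0v=0$ gives $v_\III=0$ and $u_\AAA=0$, the $\III$-rows of $\nabla^2 f(x_*^0)u=v$ reduce to $H_{*,\III\III}^0u_\III=0$ so that $u_\III=0$ by Assumption~\ref{as:suff_optcond}(iii), and the $\AAA$-rows then yield $v_\AAA=0$. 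Since $F$ is $C^1$ (because $f\in C^2$), the implicit function theorem produces some $\hat\mu>0$ and a continuously differentiable map $\mu\mapsto\phi(\mu)=(x_*^\mu,z_*^\mu)$ on $[0,\hat\mu]$ with $\phi(0)=(x_*^0,z_*^0)$, $F(\phi(\mu),\mu)=0$, and local uniqueness, which takes care of the existence and differentiability part of the statement.

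Next I would verify that $\phi$ is in fact the central path. Continuity of $\phi$ and positivity of $x_{*,i}^0$ for $i\in\III$ and of $z_{*,i}^0$ for $i\in\AAA$ give, after shrinking $\hat\mu$ if necessary, that $x_{*,i}^\mu>0$ for $i\in\III$ and $z_{*,i}^\mu>0$ for $i\in\AAA$ on all of $[0,\hat\mu]$; the complementarity rows $x_{*,i}^\mu z_{*,i}^\mu=\mu$ then give $z_{*,i}^\mu=\mu/x_{*,i}^\mu>0$ for $i\in\III$ and $x_{*,i}^\mu=\mu/z_{*,i}^\mu>0$ for $i\in\AAA$ whenever $\mu\in(0,\hat\mu]$, so $x_*^\mu,z_*^\mu\in\RR_{>0}^n$ there. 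For the size estimate I would differentiate $F(\phi(\mu),\mu)=0$ at $\mu=0$ to get $J\phi'(0)=(0,\mathbf{e})$ (with $J$ evaluated at $(x_*^0,z_*^0)$) and solve it by the same block elimination: the complementarity block shows that the derivative of $x_{*,i}^\mu$ at $\mu=0$ equals $1/z_{*,i}^0$ for $i\in\AAA$ and that of $z_{*,i}^\mu$ equals $1/x_{*,i}^0$ for $i\in\III$, while the first block expresses the remaining derivative components in terms of $H_{*,\III\III}^0$, $H_{*,\III\AAA}^0$, and the entries $x_{*,i}^0,z_{*,i}^0$. Since $x_{*,i}^0=0$ for $i\in\AAA$ and $\phi$ is differentiable at $0$, the ratio $|x_{*,i}^\mu-x_{*,i}^0|/\mu=x_{*,i}^\mu/\mu$ tends to $1/z_{*,i}^0\in(0,\infty)$ as $\mu\to 0^+$, and symmetrically $|z_{*,i}^\mu-z_{*,i}^0|/\mu\to 1/x_{*,i}^0$ for $i\in\III$; extended by these limits at $\mu=0$, each of these finitely many ratios is a continuous, strictly positive function of $\mu$ on the compact interval $[0,\hat\mu]$, hence bounded above and away from $0$. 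Taking $\theta_\mu\in(0,1)$ no larger than the common lower bound of these ratios and no larger than the reciprocal of their common upper bound then yields $\theta_\mu\mu\le|x_{*,i}^\mu-x_{*,i}^0|\le\mu/\theta_\mu$, the case $\mu=0$ being trivial.

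I expect the crux to be the nonsingularity of $J$ at $(x_*^0,z_*^0)$: this is the only place where both parts of Assumption~\ref{as:suff_optcond} are used in an essential way, the mechanism being that although $X_*^0$ and $Z_*^0$ are each singular, strict complementarity makes their kernels complementary, so that the complementarity block pins down the active primal and inactive dual directions while the reduced Hessian takes over on the remaining block. A secondary point that requires care is the lower bound in the sandwich: it rests on the corresponding component of $\phi'(0)$ being nonzero, which is automatic for the components that vanish at $\mu=0$ (where that derivative equals $1/z_{*,i}^0$ or $1/x_{*,i}^0$), whereas for a component that stays positive at $\mu=0$ one relies on the same leading-order expansion and needs the relevant path derivative not to vanish. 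Everything else is a routine continuity and Taylor estimate on $[0,\hat\mu]$.
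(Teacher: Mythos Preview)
The paper does not prove this theorem; it is quoted from \cite{wright2002properties} without argument. Your implicit-function-theorem route is exactly the standard proof of the existence and $C^1$-regularity of the local central path, and your nonsingularity check for the primal--dual Jacobian at $(x_*^0,z_*^0)$ is correct and is indeed the only place where strict complementarity and positive definiteness of the reduced Hessian are both essential. So there is nothing to compare against on the paper's side, and the first half of your proposal is fine.

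There is, however, a genuine mismatch between what you actually establish and what the statement asks. The sandwich bound is stated for $i\in\III$, i.e., for the \emph{inactive} coordinates where $x_{*,i}^0>0$. Your detailed argument---``$|x_{*,i}^\mu-x_{*,i}^0|/\mu=x_{*,i}^\mu/\mu\to 1/z_{*,i}^0$''---is valid for $i\in\AAA$, not $i\in\III$. You notice this yourself in the last paragraph, but you do not close the gap: for $i\in\III$ the relevant derivative is $(dx_\III/d\mu)|_{\mu=0}=(H^0_{*,\III\III})^{-1}\bigl[(X^0_{*,\III})^{-1}\mathbf e-H^0_{*,\III\AAA}(Z^0_{*,\AAA})^{-1}\mathbf e\bigr]$, and nothing in Assumption~\ref{as:suff_optcond} prevents a component of this vector from vanishing, in which case the lower bound $\theta_\mu\mu\le|x_{*,i}^\mu-x_{*,i}^0|$ fails. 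In fact the $\Theta(\mu)$ two-sided estimate that your argument cleanly delivers is for $x_{*,i}^\mu$ with $i\in\AAA$ (and, symmetrically, for $z_{*,i}^\mu$ with $i\in\III$); this is also the form that the paper actually uses downstream (e.g., Remark~\ref{rem:activeset}). The index set in the displayed inequality is almost certainly a typo for $\AAA$, and your proof is the right one for that corrected statement; but as written, the lower bound for $i\in\III$ is not a consequence of your argument and need not hold.
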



\subsection{A line-search method for solving the barrier problem with noisy function evaluations}

In this paper we assume that we have only access to noisy evaluations of $f$, its gradient $g:= \nabla f$, and potentially its Hessian $H := \nabla^2 f$. The noisy evaluations of $f$ and $g$ are denoted by $\tilde f$ and $\tilde g$. We further assume that the evaluation noise is bounded and non-diminishing. 

\begin{assumption}
	\label{assumption.bnd_err} 
	Let $\tilde f$ and  $\tilde g$ denote noisy evaluations of $f$ and $g$, respectively. There exist constants $\eps_f , \eps_g \in \RR_{\geq 0} $ such that for all $x \in \RR^n$
	\begin{align}
		&\tilde f(x) := \,   f(x) + \vareps_f(x), \ \text{with} \quad |\tilde f(x) - f(x) | = \, |\vareps_f(x)| \leq \eps_f \label{eq.f_err}, \\
		&\tilde g(x) := \,  g(x) + \vareps_g(x), \ \text{with} \quad  \| \tilde g(x) - g(x) \| = \, \| \vareps_g(x) \| \leq \eps_g.       \label{eq.g_err}
	\end{align}
	Here $\vareps_f(x) \in \RR$ denotes the  error in the evaluation of $f(x)$ and $\vareps_g(x) \in \RR^n$  the component-wise errors in the evaluation of $g(x)$. 
\end{assumption}

Even though we did not write it this way, the error terms $\vareps_f(x)$ and $\vareps_g(x)$ may not be deterministic, i.e., the oracle providing $\tilde f(x)$ and $\tilde g(x)$ may result in different values even if $x$ is unchanged.  We chose this notation to simplify the presentation.

{In many practical applications, noise is bounded. Consider 
	an engineering system whose performance we want to optimize, but that contains uncertainty in some of its physical parameters. By simulating it, we obtain an optimization problem with noisy evaluations, and if the decision variables are 
	bounded, we can assume that the system response is always bounded \cite{ng2014multifidelity}.
%
A similar situation may arise if $\tilde f(x)$ is obtained by a complicated numerical procedure that results in a bounded round-off error.

In this section, we assume that the barrier parameter $\mu \in \RR_{> 0}$ is fixed. As we discuss the algorithm for solving problem~\eqref{eq.barrier_bnd_const} and provide its convergence analysis, we use the superscript $\mu$ to emphasize the dependence on the barrier parameter. 
In the standard barrier method, subproblem~\eqref{eq.barrier_bnd_const} is solved by a {Newton-like} method, starting from a feasible point $x_0^\mu$.   We let $\hat H_k^\mu$ denote a real symmetric matrix (typically attempting to approximate $\na^2 f(\xk)$)
and define
\begin{align}
	\label{eq.hat_G_k}
	\hat G_k^{\mu} := \,  \hat H_k^{\mu}  + \mu (\Xk)^{-2},
\end{align}
as an approximation of the Hessian of the barrier function, $\nabla^2\varphi^{\mu}(\xk)$.  For our approach, it is crucial that the values and derivatives of the $\log$-barrier terms are not approximated since they can easily be computed exactly.

We can now outline the barrier algorithm.
In iteration $k$ with an iterate $x_k$, the search direction $d_k$  is calculated as 
\begin{equation}
	\label{eq.direction_bnd}
	d_k 
	= \, - (\hat G_k^{\mu})^{-1} \nabla \tilde \varphi^{\mu}(\xk),
\end{equation}
where, by slight abuse of notation, we define $\nabla \tilde \varphi^{\mu}(x) = \tilde g(x) - \mu X^{-1}\mathbf{e}$. The next iterate is computed as, $\xkk=\xk+\alpha_kd_k$ for some step size $\alpha_k\in(0,1]$.

In order to ensure the strict feasibility of the iterates, i.e., $\xk > 0$,  the step size $\alpha_k$ is chosen to satisfy the \emph{fraction-to-the-boundary rule}, i.e., $\alpha_k \in (0, \alpha_k^{\max}]$, where 
\begin{equation}
	\label{eq.frac_to_the_bnd}
	\alpha_k^{\max} := \, \max \left\{ \alpha \in (0, 1]:  \xk + \alpha d_k \geq (1 - \tau) \xk \right\},
\end{equation}
for some fixed parameter $\tau \in (0, 1)$. 
Furthermore, to force a decrease in the barrier function, the step size $\alpha_k$ is required to satisfy the \emph{relaxed} Armijo condition \cite{berahas2019derivative} with parameter $\nu \in (0, \tfrac{1}{2})$ and relaxation $\eps_R$,  
\begin{equation}
	\label{eq.relaxed_armijo_bnd}
	\tilde \varphi^{\mu}(\xk + \alpha_k d_k) \leq \tilde \varphi^{\mu}(\xk) + \nu \alpha_k \nabla \tilde \varphi^{\mu}(\xk)^T d_k + \eps_R ,
\end{equation}
where $\eps_R > 2 \eps_f$ and $\tilde \varphi^{\mu}(x) = \tilde f(x) - \mu \sum_{i = 1}^n \log(x_i)$. Note that the line search cannot fail due to the definition of $\eps_R$ since \eqref{eq.relaxed_armijo_bnd} is satisfied for all sufficiently small $\alpha_k$ by the continuity of $\varphi^{\mu}$. The method is summarized in Algorithm~\ref{alg.bnd_const_fixed_mu}. Its convergence proof  is provided in the next section.

\begin{algorithm}[htp!]
	\caption{: Relaxed Armijo line-search for solving barrier subproblem \eqref{eq.barrier_bnd_const}  }
	\label{alg.bnd_const_fixed_mu}
	\begin{algorithmic}[1]
		\Require $\nu \in (0, \tfrac{1}{2})$; $\eps_R > 2 \eps_f$; $\tau \in (0, 1)$; $\mu \in \RR_{> 0}$; initial iterate $x_0^\mu \in \RR^n$.
		\For{$k = 0, 1, \dots$}
		\State Choose $\hat G^\mu_k$ by \eqref{eq.hat_G_k} and compute $d_k$ by \eqref{eq.direction_bnd}. \label{st.compute_dir}
		\State Set $\alpha_k \gets \alpha_k^{\max}$ with $\alpha_k^{\max}$ from \eqref{eq.frac_to_the_bnd}.
		\While {$\alpha_k$ does not satisfy   \eqref{eq.relaxed_armijo_bnd} 
		}\label{s:armijo_begin}
		\State $\alpha_k \gets \tfrac{1}{2} \alpha_k$.
		\EndWhile\label{s:armijo_end}
		\State Set $\xkk \gets \xk + \alpha_k d_k$.
		\EndFor
	\end{algorithmic}
\end{algorithm}

To ensure that the step $d_k$ in \eqref{eq.direction_bnd} can be computed, we make the following assumption about $\hat G_k^{\mu}$.
\begin{assumption}
	\label{assumption.quasi_Newton_Hess_singular_val}
	$\hat G_k^{\mu}$ is uniformly positive definite, i.e.,  there exists $\hat\sigma_l^{G} \in \RR_{> 0}$ such that for all $k\in\NN$
	\begin{equation}\label{eq:sigma_l_G}
		\sigma_{\min}(\hat G_k^{\mu})\geq \hat\sigma_l^{G}.
	\end{equation}
\end{assumption} 

This assumption is not very restrictive.  In Lemma~\ref{lemma.G_unif_posdef_iter} we show that \eqref{eq:sigma_l_G} holds if the sufficient second-order optimality conditions (Assumption~\ref{as:suff_optcond}) are satisfied, $\mu$ is sufficiently small, and $\hat H^\mu_k$ is a sufficiently accurate approximation of $\na^2 f(\xk)$.

\section{Global convergence}
\label{sec.global_conv}
In this section, we examine the convergence of Algorithm~\ref{alg.bnd_const_fixed_mu} for a fixed value of $\mu$.  Our work extends the results in \cite{berahas2019derivative} in several ways by
(i) permitting an unbounded barrier objective function; (ii) considering second-order (Newton-like) steps instead of gradient steps; and (iii) allowing nonconvex instead of only strongly convex functions. 
Furthermore, Section~\ref{sec.stopping_test} introduces a practical termination test that could also be applied to the algorithm in \cite{berahas2019derivative}.

It is well known that primal-dual interior-point methods (e.g., Algorithm~\ref{alg.bnd_const_update_barrier} in \appE) perform much better in practice than the primal method described above.  The primal-dual method maintains iterates $z^\mu_k$ approximating the bound multipliers and calculates the search directions from \eqref{eq.direction_bnd} with 
\begin{equation}\label{eq:G_primaldual}
	\hat G_k^\mu=\hat H_k^\mu + Z_k^\mu (X_k^\mu)^{-1},
\end{equation}
instead of \eqref{eq.hat_G_k}.
For simplicity, we analyze in this section the primal method using \eqref{eq.hat_G_k}, but the results hold also for the primal-dual method if a safeguard is implemented like in \cite{wachter2006implementation} that ensures that $z^\mu_k$ has positive entries and does not deviate arbitrarily far from $\mu (X_k^\mu)^{-1}\mathbf e$ (see \eqref{eq.dual_var_proj} in \appE.)
The only difference is that the threshold $\delta_x$ in Lemma~\ref{lemma.lb_on_x_bnd_const} might be smaller than in the primal case.

To simplify the notation, we let $\varphi_k^{\mu}=\varphi^{\mu}(\xk)$ and $\na\varphi_k^\mu=\na\varphi^\mu(\xk)$ and use similar simplifications for the noisy quantities.

\subsection{Global convergence analysis}
In addition to Assumptions~\ref{assumption.bnd_err} and \ref{assumption.quasi_Newton_Hess_singular_val}, we 
require the following assumptions. 

\begin{assumption}
	\label{assumption.bnd_const_grad_lipschitz}
	There exists an open set $\mathcal X \subset \RR^n$ such that $\{\xk+\alpha d_k : \alpha\in[0, 1]\}\subset\mathcal X$ for all $k$, 
	$f$ is differentiable and bounded below on $\cal X$ and the gradient $\nabla f$ is bounded and Lipschitz continuous with constant $L_g \in \RR_{>0}$ over $\XXX$, i.e., for all $x, y \in \cal X$ one has
	\[
	\| g(x) - g(y) \| \leq L_g \|x - y\|. \]
\end{assumption} 
\begin{assumption}
	\label{as:Hbounded}
	The Hessian approximation $\hat H_k$ is uniformly bounded.
\end{assumption}

In the following, we prove some preliminary results before stating the main theorem. 
In addition, for the remainder of this section, we assume that Assumptions~\ref{assumption.bnd_err}, \ref{assumption.quasi_Newton_Hess_singular_val}, \ref{assumption.bnd_const_grad_lipschitz}, and \ref{as:Hbounded} hold for all the lemmas and theorems stated and proved herein.

\begin{lemma}
	\label{lemma:barrier_bounded}
	Let $\xk\in\RR^n_{>0}$, $d_k\in\RR^n$, $\tau$ be the fraction-to-the-boundary parameter  in \eqref{eq.frac_to_the_bnd}, and suppose $\alpha \leq \alpha_k^{\max}$. Then 
	\begin{align*}
		-&  \left( \sum_{i=1}^n \log(x_{k, i}^\mu + \alpha d_{k, i}) - \sum_{i=1}^n \log(x_{k, i}^\mu) - \alpha \sum_{i=1}^n \tfrac{d_{k, i}}{x_{k, i}^\mu} \right) \leq \alpha^2 \tfrac{1}{1-\tau} \| (\Xk)^{-1} d_k \|^2
	\end{align*}
\end{lemma}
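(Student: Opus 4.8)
The plan is to bound the barrier-term deficit by analyzing each coordinate separately and summing. Define, for a single coordinate with $x := x_{k,i}^\mu > 0$ and $d := d_{k,i}$, the scalar function $h(\alpha) := -\bigl(\log(x + \alpha d) - \log(x) - \alpha \tfrac{d}{x}\bigr)$. By Taylor's theorem with integral remainder, $h(\alpha) = \alpha^2 \int_0^1 (1-t)\,\tfrac{d^2}{(x + t\alpha d)^2}\,dt$, since $h(0) = h'(0) = 0$ and $h''(\alpha) = \tfrac{d^2}{(x+\alpha d)^2}$. The key observation is that the fraction-to-the-boundary rule gives us a lower bound on the denominator: for $\alpha \le \alpha_k^{\max}$ we have $x + \alpha d \ge (1-\tau) x$, and hence also $x + t\alpha d \ge (1-\tau) x$ for all $t \in [0,1]$ (the point $x + t\alpha d$ lies on the segment between $x$ and $x + \alpha d$, both of which are at least $(1-\tau)x$). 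Therefore $\tfrac{1}{(x + t\alpha d)^2} \le \tfrac{1}{(1-\tau)^2 x^2}$, so $h(\alpha) \le \alpha^2 \tfrac{d^2}{(1-\tau)^2 x^2}\int_0^1(1-t)\,dt = \tfrac{\alpha^2}{2}\cdot\tfrac{d^2}{(1-\tau)^2 x^2} \le \alpha^2 \tfrac{1}{1-\tau}\cdot\tfrac{d^2}{x^2}$, using $\tfrac{1}{2(1-\tau)^2} \le \tfrac{1}{1-\tau}$ because $1-\tau \le 1 \le 2$.

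Summing this coordinate-wise bound over $i = 1, \dots, n$ yields exactly the left-hand side of the claimed inequality on the left, and $\alpha^2 \tfrac{1}{1-\tau}\sum_{i=1}^n \tfrac{d_{k,i}^2}{(x_{k,i}^\mu)^2} = \alpha^2 \tfrac{1}{1-\tau}\|(\Xk)^{-1} d_k\|^2$ on the right, which completes the proof.

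I do not expect a serious obstacle here; the one point that needs a little care is justifying that the intermediate point $x + t\alpha d$ inherits the lower bound $(1-\tau)x$ from the endpoints — this follows because $x + t\alpha d$ is a convex combination of $x$ and $x + \alpha d$ (namely $x + t\alpha d = (1-t)x + t(x+\alpha d)$), and both $x$ and $x + \alpha d$ are $\ge (1-\tau)x$. An alternative to the integral-remainder argument is to write $h(\alpha) = \int_0^\alpha (\alpha - s)\,\tfrac{d^2}{(x + s d)^2}\,ds$ directly and bound the denominator the same way; either route is routine. One should also note the degenerate possibility $d_{k,i} = 0$, in which case that coordinate contributes zero to both sides, so the bound is trivially fine.
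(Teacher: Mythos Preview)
Your coordinate-wise strategy is sound, but there is a genuine gap in the final inequality. You claim $\tfrac{1}{2(1-\tau)^2} \le \tfrac{1}{1-\tau}$; this is equivalent to $1 \le 2(1-\tau)$, i.e.\ $\tau \le \tfrac12$. The algorithm only assumes $\tau\in(0,1)$, and in practice $\tau$ is chosen close to $1$ (cf.\ the update $\tau_\ell = \max\{\tau_{\min},1-\mu_\ell\}$ in the appendix), so the step fails precisely in the regime of interest. For example, with $\tau=0.9$ one has $\tfrac{1}{2(1-\tau)^2}=50$ while $\tfrac{1}{1-\tau}=10$.

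The source of the problem is the second-order Taylor remainder, which places $(x+t\alpha d)^2$ in the denominator and forces you to pay $(1-\tau)^{-2}$. The fix is to integrate only once: with $h(s)=-\log(x+sd)+\log x + s\tfrac{d}{x}$ one has $h(0)=0$ and
\[
h'(s)=\frac{d}{x}-\frac{d}{x+sd}=\frac{s\,d^2}{x\,(x+sd)},
\]
so
\[
h(\alpha)=\int_0^\alpha \frac{s\,d^2}{x\,(x+sd)}\,ds
\;\le\;\frac{d^2}{(1-\tau)\,x^2}\int_0^\alpha s\,ds
=\frac{\alpha^2}{2(1-\tau)}\cdot\frac{d^2}{x^2}
\;\le\;\frac{\alpha^2}{1-\tau}\cdot\frac{d^2}{x^2},
\]
using $x+sd\ge(1-\tau)x$ for $s\in[0,\alpha]$ (which you justified correctly). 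Only one factor of $1-\tau$ appears because only one of the two factors in the denominator needs the fraction-to-the-boundary bound; the other is exactly $x$. Summing over coordinates then gives the lemma. This is essentially the argument behind Lemma~4 of Byrd--Hribar--Nocedal, to which the paper defers.
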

\begin{proof}
	The proof is similar to that of Lemma~4 in \cite{byrd2000trust}. 
\end{proof}

\begin{lemma}
	\label{lemma.lb_on_x_bnd_const}
	There exists a constant $\delta_x \in \RR_{>0} $ such that $\xk \geq \delta_x \mathbf{e}$ for all $k$.
\end{lemma}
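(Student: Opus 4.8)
The plan is to prove the uniform lower bound directly, by showing that the Newton-type direction $d_k$ cannot point aggressively toward a bound: whenever a coordinate $x_{k,i}^\mu$ is small one has $d_{k,i}\ge 0$, so the step $\xkk=\xk+\alpha_k d_k$ cannot decrease that coordinate, while for coordinates that are not small the fraction-to-the-boundary rule \eqref{eq.frac_to_the_bnd} limits the decrease by a factor $1-\tau$; a short induction then delivers $\delta_x$.

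First I would rewrite the step in a form that exposes this behavior. From \eqref{eq.hat_G_k} we have $\mu(\Xk)^{-2}=\hat G_k^\mu-\hat H_k^\mu$, hence $\mu(\Xk)^{-1}\mathbf e=\mu(\Xk)^{-2}\xk=(\hat G_k^\mu-\hat H_k^\mu)\xk$, so \eqref{eq.direction_bnd} becomes
\[
d_k=-(\hat G_k^\mu)^{-1}\big(\tilde g(\xk)-\mu(\Xk)^{-1}\mathbf e\big)=\xk-(\hat G_k^\mu)^{-1}\big(\tilde g(\xk)+\hat H_k^\mu\xk\big).
\]
Setting $w_k:=(\hat G_k^\mu)^{-1}(\tilde g(\xk)+\hat H_k^\mu\xk)$, so $d_k=\xk-w_k$, and expanding $\hat G_k^\mu w_k=\tilde g(\xk)+\hat H_k^\mu\xk$ while keeping the $\mu(\Xk)^{-2}$ term isolated gives, componentwise,
\[
\mu\,\frac{w_{k,i}}{(x_{k,i}^\mu)^2}=\tilde g_i(\xk)+(\hat H_k^\mu\xk)_i-(\hat H_k^\mu w_k)_i .
\]

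Next I would bound $w_k$. Using $\sigma_{\min}(\hat G_k^\mu)\ge\hat\sigma_l^G$ (Assumption~\ref{assumption.quasi_Newton_Hess_singular_val}), $\|\hat H_k^\mu\|\le H_{\max}$ for a uniform $H_{\max}$ (Assumption~\ref{as:Hbounded}), $\|g(x)\|\le G_{\max}$ on $\mathcal X$ (Assumption~\ref{assumption.bnd_const_grad_lipschitz}), $\|\tilde g(\xk)-g(\xk)\|\le\eps_g$ (Assumption~\ref{assumption.bnd_err}), and the boundedness of the iterates ($\|\xk\|\le X_{\max}$), we get $\|w_k\|\le(G_{\max}+\eps_g+H_{\max}X_{\max})/\hat\sigma_l^G$, which is uniformly bounded; substituting this into the componentwise identity yields a constant $c>0$, independent of $k$ and $i$, with $|w_{k,i}|\le c\,(x_{k,i}^\mu)^2$. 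The essential point is the factor $(x_{k,i}^\mu)^2$: the large diagonal entry $\mu/(x_{k,i}^\mu)^2$ of $\hat G_k^\mu$ forces the $i$-th component of $(\hat G_k^\mu)^{-1}$ applied to any bounded vector to be of order $(x_{k,i}^\mu)^2$. Hence $d_{k,i}=x_{k,i}^\mu-w_{k,i}\ge x_{k,i}^\mu(1-c\,x_{k,i}^\mu)$, so with $\delta:=1/(2c)$ we have $d_{k,i}\ge 0$ whenever $x_{k,i}^\mu\le\delta$. Now fix $i\in[n]$: if $x_{k,i}^\mu\le\delta$ then $x_{k+1,i}^\mu=x_{k,i}^\mu+\alpha_k d_{k,i}\ge x_{k,i}^\mu$ since $\alpha_k>0$; if $x_{k,i}^\mu>\delta$ then \eqref{eq.frac_to_the_bnd} gives $x_{k+1,i}^\mu\ge(1-\tau)x_{k,i}^\mu>(1-\tau)\delta$. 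In both cases $x_{k+1,i}^\mu\ge\min\{x_{k,i}^\mu,(1-\tau)\delta\}$, so by induction $x_{k,i}^\mu\ge\min\{x_{0,i}^\mu,(1-\tau)\delta\}$ for all $k$, and $\delta_x:=\min_{i\in[n]}\min\{x_{0,i}^\mu,(1-\tau)\delta\}>0$ finishes the proof.

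The step I expect to be the main obstacle is establishing that the iterates lie in a bounded set, which is exactly what makes $c$ — and hence $\delta$ — independent of $k$. Because the relaxed Armijo condition \eqref{eq.relaxed_armijo_bnd} permits $\tilde\varphi^\mu$ to grow by up to $\eps_R$ at each accepted step, boundedness of $\{\xk\}$ does not follow from a naive monotone-decrease argument; one has to exploit that such growth can only occur when $\nabla\tilde\varphi^\mu(\xk)$ is small, i.e.\ when $\xk$ is near a stationary point of $\tilde\varphi^\mu$ (where all components are automatically bounded away from $0$ and from above), together with $f$ being bounded below on $\mathcal X$ (Assumption~\ref{assumption.bnd_const_grad_lipschitz}). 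If instead $\mathcal X$ is taken bounded from the outset, this difficulty disappears and the computation of $c$ above goes through unchanged; the remaining manipulations are routine algebra.
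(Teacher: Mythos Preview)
The paper does not give a proof here; it simply cites Theorem~3 in W\"achter--Biegler~\cite{wachter2005line}. Your direct argument is essentially the argument underlying that reference: the barrier contribution $\mu(\Xk)^{-2}$ in $\hat G_k^\mu$ forces $d_{k,i}\ge 0$ whenever $x_{k,i}^\mu$ is small, and the fraction-to-the-boundary rule handles the remaining coordinates. So the mechanism you identify is the correct one.

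Two comments. First, a simplification you missed: in your componentwise identity the combination $(\hat H_k^\mu\xk)_i-(\hat H_k^\mu w_k)_i$ equals $(\hat H_k^\mu d_k)_i$ because $d_k=\xk-w_k$, so the constant $c$ depends on a uniform bound for $\|\hat H_k^\mu d_k\|$ rather than separately on $\|\xk\|$ and $\|w_k\|$. This does not remove the difficulty (bounding $\|d_k\|$ via Lemma~\ref{lemma.bnd_d} uses the very lemma you are proving), but it clarifies where the circularity sits.

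Second, the obstacle you flag---uniform boundedness of the iterates---is genuine and is not trivially resolved by the relaxed-Armijo reasoning you sketch. In \cite{wachter2005line} boundedness of the primal iterates is an explicit standing assumption, and that is what makes their Theorem~3 go through. The present paper's Assumption~\ref{assumption.bnd_const_grad_lipschitz} does not state that $\mathcal X$ is bounded, so the citation implicitly imports this hypothesis. Your write-up would be complete if you either (i) add boundedness of $\{\xk\}$ as an assumption (matching the cited source), or (ii) note that when $\hat H_k^\mu\equiv 0$ the cross term $(\hat H_k^\mu d_k)_i$ vanishes and your bound $|w_{k,i}|\le c\,(x_{k,i}^\mu)^2$ follows directly from the boundedness of $\tilde g$, so no boundedness of iterates is needed in that case; the general Newton-type case, however, does require it.
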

\begin{proof}
	This claim was proven as Theorem~3 in \cite{wachter2005line}.
\end{proof}

\begin{lemma}
	\label{lemma.bnd_d}
	There exists $\Delta_d \in \RR_{>0}$  such that $\|d_k\|\leq \Delta_d$ for all $k$.
\end{lemma}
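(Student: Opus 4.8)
The plan is to bound $d_k$ directly from its definition \eqref{eq.direction_bnd} by combining the uniform positive-definiteness of $\hat G_k^{\mu}$ with uniform bounds on each of the two terms that make up $\nabla\tilde\varphi^{\mu}(\xk)=\tilde g(\xk)-\mu(\Xk)^{-1}\mathbf e$. First I would invoke Assumption~\ref{assumption.quasi_Newton_Hess_singular_val}, which gives $\|(\hat G_k^{\mu})^{-1}\|\le 1/\hat\sigma_l^{G}$, so that
\[
\|d_k\|\;=\;\bigl\|(\hat G_k^{\mu})^{-1}\nabla\tilde\varphi^{\mu}(\xk)\bigr\|
\;\le\;\frac{1}{\hat\sigma_l^{G}}\Bigl(\|\tilde g(\xk)\|+\mu\,\|(\Xk)^{-1}\mathbf e\|\Bigr).
\]
It then remains to bound the two terms on the right independently of $k$.

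For the gradient term, note that $\xk\in\XXX$ (take $\alpha=0$ in the set inclusion of Assumption~\ref{assumption.bnd_const_grad_lipschitz}), and since $\nabla f=g$ is bounded on $\XXX$ by that same assumption there is a constant $M_g$ with $\|g(\xk)\|\le M_g$ for all $k$; together with $\|\tilde g(\xk)-g(\xk)\|\le\eps_g$ from Assumption~\ref{assumption.bnd_err}, this yields $\|\tilde g(\xk)\|\le M_g+\eps_g$. For the barrier term, I would use Lemma~\ref{lemma.lb_on_x_bnd_const}, which gives $\xk\ge\delta_x\mathbf e$; hence $1/x_{k,i}^{\mu}\le 1/\delta_x$ for every $i$, so $\|(\Xk)^{-1}\mathbf e\|\le\sqrt n/\delta_x$. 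Substituting both bounds gives the claim with
\[
\Delta_d\;:=\;\frac{1}{\hat\sigma_l^{G}}\Bigl(M_g+\eps_g+\frac{\mu\sqrt n}{\delta_x}\Bigr).
\]

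There is no real obstacle here: the lemma is essentially a bookkeeping consequence of results already established, and the only thing worth flagging is that the crucial lower bound $\xk\ge\delta_x\mathbf e$ on the iterates has already been secured in Lemma~\ref{lemma.lb_on_x_bnd_const} (which is where all the genuine work lies), while the boundedness of $g$ over $\XXX$ and the noise bound $\eps_g$ come directly from the standing assumptions. One minor point to state carefully is that $\Delta_d$ depends on $\mu$, which is fine since $\mu$ is fixed throughout this section.
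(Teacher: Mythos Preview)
Your proposal is correct and follows essentially the same argument as the paper: bound $\|d_k\|$ by $\|(\hat G_k^\mu)^{-1}\|\,\|\nabla\tilde\varphi^\mu(\xk)\|$, then control the inverse via Assumption~\ref{assumption.quasi_Newton_Hess_singular_val}, the gradient term via the boundedness of $g$ on $\XXX$ (Assumption~\ref{assumption.bnd_const_grad_lipschitz}) plus the noise bound $\eps_g$, and the barrier term via Lemma~\ref{lemma.lb_on_x_bnd_const}. The only cosmetic difference is that you carry the explicit $\sqrt n$ factor in the barrier estimate, which the paper suppresses in its constant.
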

\begin{proof} By Assumptions~\ref{assumption.quasi_Newton_Hess_singular_val}, together with Lemma~\ref{lemma.lb_on_x_bnd_const} one finds
	$
	\| d_k \|
	\leq \| (\hat G_k^{\mu})^{-1} \| \| \nabla \tilde \varphi^{\mu}(\xk) \|
	\leq \tfrac{1}{\hat \sigma_l^{G} } \| \tilde g(\xk) - \mu (\Xk)^{-1}\mathbf{e} \|
	\leq  \tfrac{1}{\hat \sigma_l^{G}}\left(\Delta_g + \vareps_g + \tfrac{\mu}{\delta_x}\right)
	$, where the upper bound $\Delta_g$ on $\|g(x_k^\mu)\|$ exists by Assumption~\ref{assumption.bnd_const_grad_lipschitz}.
\end{proof}

\begin{lemma}
	\label{lemma.non-noisy-bnd-iterations}
	Let us define
	\begin{align}
		\label{eq.bar_bar_alpha}
		\bar \alpha := \, \min \left\{1,\tfrac{\tau\delta_x}{\Delta_d}, \tfrac{\hat \sigma_l^{G}}{L_g + \tfrac{2 \mu}{(1 - \tau) \delta_x^2}} \right\},
	\end{align}
	then for any $k$ that $\bar\alpha\leq\alpha_k^{\max}$ and for any $\alpha \in (0, \bar \alpha)$
	\begin{align*}
		\varphi^{\mu}(\xk + \alpha d_k) - \varphi^{\mu}(\xk) \leq&
		-\tfrac{\alpha}{2} \| \nabla \varphi^{\mu}(\xk) \|_{(\hat G_k^{\mu})^{-1}}^2 + \tfrac{\alpha}{2} \| \vareps_g(\xk) \|_{(\hat G_k^{\mu})^{-1}}^2.
	\end{align*}
\end{lemma}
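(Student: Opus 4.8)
The plan is to estimate the change $\varphi^\mu(\xk+\alpha d_k)-\varphi^\mu(\xk)$ by splitting it into the change in the smooth part $f$ and the change in the barrier part $-\mu\sum_i\log(\cdot)$, and then to absorb the second-order error terms using the uniform positive definiteness of $\hat G_k^\mu$. First I would write, using Taylor's theorem with the integral form of the remainder and Assumption~\ref{assumption.bnd_const_grad_lipschitz},
\[
f(\xk+\alpha d_k)-f(\xk)\leq \alpha\, g(\xk)^T d_k + \tfrac{L_g}{2}\alpha^2\|d_k\|^2 .
\]
For the barrier part I would invoke Lemma~\ref{lemma:barrier_bounded} directly: since $\alpha\leq\bar\alpha\leq\alpha_k^{\max}$ (this is the hypothesis that $\bar\alpha\le\alpha_k^{\max}$, plus $\alpha<\bar\alpha$), the logarithmic terms are controlled by $\alpha^2\tfrac{1}{1-\tau}\|(\Xk)^{-1}d_k\|^2$. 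Combining, and noting $\nabla\varphi^\mu(\xk)=g(\xk)-\mu(\Xk)^{-1}\mathbf e$ so that the linear terms merge into $\alpha\,\nabla\varphi^\mu(\xk)^Td_k$, gives
\[
\varphi^\mu(\xk+\alpha d_k)-\varphi^\mu(\xk)
\leq \alpha\,\nabla\varphi^\mu(\xk)^T d_k + \tfrac{\alpha^2}{2}\Big(L_g\|d_k\|^2 + \tfrac{2}{1-\tau}\|(\Xk)^{-1}d_k\|^2\Big).
\]

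Next I would bound the second-order coefficient. Using Lemma~\ref{lemma.lb_on_x_bnd_const}, $\|(\Xk)^{-1}d_k\|^2\leq \tfrac{1}{\delta_x^2}\|d_k\|^2$, so the bracketed quantity is at most $\big(L_g+\tfrac{2}{(1-\tau)\delta_x^2}\big)\|d_k\|^2$; wait—I need the $\mu$ factor, so more carefully the barrier Hessian contributes $\tfrac{2\mu}{(1-\tau)\delta_x^2}$, matching the denominator appearing in $\bar\alpha$. Since $\alpha<\bar\alpha\leq \hat\sigma_l^G/\big(L_g+\tfrac{2\mu}{(1-\tau)\delta_x^2}\big)$, I get $\tfrac{\alpha^2}{2}\big(L_g+\tfrac{2\mu}{(1-\tau)\delta_x^2}\big)\|d_k\|^2\leq \tfrac{\alpha}{2}\hat\sigma_l^G\|d_k\|^2\leq \tfrac{\alpha}{2}\|d_k\|^2_{\hat G_k^\mu}$, the last step because $\hat\sigma_l^G I\preceq \hat G_k^\mu$. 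Then substitute $d_k=-(\hat G_k^\mu)^{-1}\nabla\tilde\varphi^\mu(\xk)$: the first-order term becomes $-\alpha\,\nabla\varphi^\mu(\xk)^T(\hat G_k^\mu)^{-1}\nabla\tilde\varphi^\mu(\xk)$ and the quadratic term becomes $\tfrac{\alpha}{2}\nabla\tilde\varphi^\mu(\xk)^T(\hat G_k^\mu)^{-1}\nabla\tilde\varphi^\mu(\xk)$.

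The remaining task is the linear-algebra identity: writing $a=\nabla\varphi^\mu(\xk)$, $b=\nabla\tilde\varphi^\mu(\xk)$, and $M=(\hat G_k^\mu)^{-1}\succ0$, and observing that $b-a=\vareps_g(\xk)$ (since the barrier term is computed exactly), I want to show $-a^TMb+\tfrac12 b^TMb\leq -\tfrac12 a^TMa+\tfrac12(b-a)^TM(b-a)$. This is just the identity $-a^TMb+\tfrac12b^TMb+\tfrac12a^TMa=\tfrac12(b-a)^TM(b-a)$, which holds with equality by expanding the right-hand side; so in fact the inequality is an equality. Plugging back $a=\nabla\varphi^\mu(\xk)$ and $b-a=\vareps_g(\xk)$ yields exactly the claimed bound $-\tfrac{\alpha}{2}\|\nabla\varphi^\mu(\xk)\|^2_{(\hat G_k^\mu)^{-1}}+\tfrac{\alpha}{2}\|\vareps_g(\xk)\|^2_{(\hat G_k^\mu)^{-1}}$.

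The only genuinely delicate point is the bookkeeping of the constant inside $\bar\alpha$ and making sure the $\mu$-dependent term from the barrier Hessian is tracked correctly through Lemma~\ref{lemma:barrier_bounded}; the rest is Taylor expansion plus the elementary quadratic identity above. I would also double-check that $\alpha<\bar\alpha\leq\tfrac{\tau\delta_x}{\Delta_d}$ together with Lemma~\ref{lemma.bnd_d} guarantees $\xk+\alpha d_k$ stays in the domain where the logarithms and Assumption~\ref{assumption.bnd_const_grad_lipschitz} apply (i.e., $\xk+\alpha d_k>0$ and in $\mathcal X$), so that the Taylor estimate for $f$ is legitimate.
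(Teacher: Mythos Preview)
Your argument is correct and follows the same overall skeleton as the paper (Taylor expansion for $f$, Lemma~\ref{lemma:barrier_bounded} for the log terms, then substitute $d_k$), but your execution is more elementary at two places. First, you use the descent lemma $f(\xk+\alpha d_k)\le f(\xk)+\alpha g(\xk)^Td_k+\tfrac{L_g}{2}\alpha^2\|d_k\|^2$ directly, whereas the paper invokes a second-order Taylor expansion with an intermediate Hessian $\breve H_k=\nabla^2 f(\xk+\breve\alpha d_k)$ and carries the matrix $\breve G_k^\mu=\breve H_k+\tfrac{2\mu}{1-\tau}(\Xk)^{-2}$ through the computation. Second, after bounding the quadratic remainder by $\tfrac{\alpha}{2}d_k^T\hat G_k^\mu d_k$ you finish with the exact identity $-a^TMb+\tfrac12 b^TMb=-\tfrac12 a^TMa+\tfrac12(b-a)^TM(b-a)$, while the paper instead shows $\hat G_k^\mu-\alpha\breve G_k^\mu\succ 0$, takes its Cholesky factor, and applies Young's inequality to the cross term before the cancellations occur. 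Your route is shorter, uses only the Lipschitz-gradient hypothesis actually stated in Assumption~\ref{assumption.bnd_const_grad_lipschitz}, and yields the conclusion as an equality at the final algebraic step rather than an inequality; the paper's route retains more matrix structure longer but gains nothing from it here, since the same scalar bound $L_g+\tfrac{2\mu}{(1-\tau)\delta_x^2}$ on $\breve G_k^\mu$ is what drives the threshold $\bar\alpha$ in both arguments. Your closing check that $\bar\alpha\le\tfrac{\tau\delta_x}{\Delta_d}$ forces $\bar\alpha\le\alpha_k^{\max}$ is exactly the first paragraph of the paper's proof.
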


\begin{proof}
	Consider a fixed iteration $k$.
	Then, from \eqref{eq.frac_to_the_bnd}, $\alpha_k^{\max}=1$ or $\alpha_k^{\max} = \tfrac{-\tau x_{k,i}^\mu}{d_{k,i}}$ with $d_{k,i}<0$ for some $i$.  Using Lemmas~\ref{lemma.lb_on_x_bnd_const} and \ref{lemma.bnd_d}, we conclude that $\bar\alpha\leq\alpha_k^{\max}$.
	
	By Taylor's theorem, $\alpha \leq \bar\alpha\leq\alpha_k^{\max}$, and Lemma~\ref{lemma:barrier_bounded}, one finds
	\begin{align}
		\label{eq.dummy_taylor}
		\begin{split}
			\varphi^{\mu}(\xk + \alpha d_k) - \varphi^{\mu}(\xk) =&\; f(\xk + \alpha d_k ) - \mu \sum_{i=1}^n \log(x_{k, i}^\mu + \alpha d_{k, i})\\
			&- f(\xk) + \mu \sum_{i=1}^n \log(x_{k, i}^\mu) \\
			\leq&\; \alpha \nabla \varphi^{\mu}(\xk)^T d_k + \tfrac{\alpha^2}{2}  d_k^T \breve H_k  d_k + \tfrac{ \mu}{1 - \tau}  \alpha^2 \| (\Xk)^{-1} d_k \|^2 \\
			=&\; \alpha \nabla \varphi^{\mu}(\xk)^T d_k + \tfrac{\alpha^2}{2}  d_k^T \breve G_k^{\mu} d_k
		\end{split}
	\end{align}
	where $\breve H_k^\mu := \nabla^2 f(\xk +  \breve \alpha d_k)$ for some $ \breve \alpha \in (0, 1)$, 
	and $\breve G_k^{\mu} := \breve H_k^\mu  + \tfrac{2 \mu}{1 - \tau} (\Xk)^{-2}$.
	By substituting $d_k$ from \eqref{eq.direction_bnd} one has 
	\begin{equation}
		\label{eq:bound_constrained_temp1}
		\begin{split}
			&\; \varphi^{\mu}(\xk + \alpha d_k) - \varphi^{\mu}(\xk) \\
			\leq& \;  - \alpha \nabla \varphi^{\mu}(\xk)^T (\hat G_k^{\mu})^{-1}(\nabla \varphi^{\mu}(\xk) + \vareps_g(\xk)) \\
			& \; + \tfrac{\alpha^2 }{2}  (\nabla\varphi^{\mu}(\xk) + \vareps_g(\xk))^T (\hat G_k^{\mu})^{-1} \breve G_k^{\mu} (\hat G_k^{\mu})^{-1} (\nabla\varphi^{\mu}(\xk) + \vareps_g(\xk)) \\
			=& \;  - \tfrac{\alpha}{2} \nabla \varphi^{\mu}(\xk)^T (\hat G_k^{\mu})^{-1} \left(2\hat G_k^{\mu} -  \alpha \breve G_k^{\mu} \right) (\hat G_k^{\mu})^{-1} \nabla \varphi^{\mu}(\xk) \\
			&\; +\tfrac{\alpha^2}{2}  \vareps_g(\xk)^T (\hat G_k^{\mu})^{-1} \breve G_k^{\mu} (\hat G_k^{\mu})^{-1} \vareps_g(\xk) \\
			&\; -\alpha \nabla \varphi^{\mu}(\xk)^T (\hat G_k^{\mu})^{-1} \left(\hat G_k^{\mu} - \alpha \breve G_k^{\mu} \right) (\hat G_k^{\mu})^{-1} \vareps_g(\xk).
		\end{split}
	\end{equation}
	By Assumptions~\ref{assumption.bnd_const_grad_lipschitz} and \ref{assumption.quasi_Newton_Hess_singular_val} and Lemma~\ref{lemma.lb_on_x_bnd_const} we have 
	\begin{align*}
		\hat G_k^{\mu}- \alpha \breve G_k^{\mu} \succeq \, \left( \hat \sigma_l^{G} - \alpha \left(L_g + \tfrac{2\mu}{(1 - \tau) \delta_x^2} \right) \right) I,
	\end{align*}
	which implies that for any $\alpha \in (0, \bar \alpha)$, $\hat G_k^{\mu}- \alpha \breve G_k^{\mu} \succ 0$.
	Let $R_k^T R_k$ be the Cholesky factorization of $\hat G_k^{\mu} - \alpha \breve G_k^{\mu}$ for such an $\alpha$.
	Using the fact that $- v^T w \leq \tfrac{\|v\|^2}{2} + \tfrac{\|w\|^2}{2} $ for arbitrary vectors $v, w \in \RR^n$, we have
	\begin{equation*} 
		\begin{split}
			& -\alpha \nabla \varphi^{\mu}(\xk)^T (\hat G_k^{\mu})^{-1} \left(\hat G_k^{\mu} - \alpha \breve G_k^{\mu} \right) (\hat G_k^{\mu})^{-1} \vareps_g(\xk)\\
			\leq &\; \tfrac{\alpha}{2} \left(\|R_k (\hat G_k^{\mu})^{-1} \nabla \varphi^{\mu}(\xk)\|^2 + \|R_k (\hat G_k^{\mu})^{-1} \vareps_g(\xk)\|^2   \right) \\
			\leq &\; \tfrac{\alpha}{2} \nabla \varphi^{\mu}(\xk)^T (\hat G_k^{\mu})^{-1} R_k^T R_k (\hat G_k^{\mu})^{-1} \nabla \varphi^{\mu}(\xk) + \tfrac{\alpha}{2} \vareps_g(\xk)^T (\hat G_k^{\mu})^{-1} R_k^T R_k (\hat G_k^{\mu})^{-1} \vareps_g(\xk) .
		\end{split}
	\end{equation*}
	Combining this with \eqref{eq:bound_constrained_temp1} we obtain
	\begin{equation*}
		\begin{split}
			& \varphi^{\mu}(\xk + \alpha d_k) - \varphi^{\mu}(\xk) \\
			\leq&\; -\tfrac{\alpha}{2} \nabla \varphi^{\mu}(\xk)^T (\hat G_k^{\mu})^{-1} \left( 2\hat G_k^{\mu}- {\alpha}  \breve G_k^{\mu} - R_k^T R_k\right) (\hat G_k^{\mu})^{-1} \nabla \varphi^{\mu}(\xk) \\
			& + \tfrac{\alpha}{2}  \vareps_g(\xk)^T (\hat G_k^{\mu})^{-1} \left(\alpha \breve G_k^{\mu} + R_k^T R_k\right) (\hat G_k^{\mu})^{-1} \vareps_g(\xk)  \\
			=&\;
			- \tfrac{\alpha}{2} \nabla \varphi^{\mu}(\xk)^T (\hat G_k^{\mu})^{-1}  \nabla \varphi^{\mu}(\xk) 
			+ \tfrac{\alpha}{2} \vareps_g(\xk)^T (\hat G_k^{\mu})^{-1}  \vareps_g(\xk)  \\
			=&\; - \tfrac{\alpha}{2} \|\nabla \varphi^{\mu}(\xk) \|_{(\hat G_k^{\mu})^{-1}}^2
			+ \tfrac{\alpha}{2}  \|\vareps_g(\xk) \|_{(\hat G_k^{\mu})^{-1}}^2,
		\end{split}
	\end{equation*} 
	as desired. 
\end{proof}

For the next results, we define the noise-to-signal ratio $\beta_k$  \cite{berahas2019derivative} as
\begin{align}
	\label{eq.kappa_mu_beta}
	\beta_k := \, \tfrac{\| \vareps_g(\xk) \|_{(\hat G_k^{\mu})^{-1}}}{\|\nabla \varphi^{\mu}(\xk) \|_{(\hat G_k^{\mu})^{-1}}},
\end{align}
and observe that by Assumption~\ref{assumption.bnd_err} and triangle inequality, one finds
\begin{align}
	\label{eq.bnd_const_noisy_grad_bounds}
	(1 - \beta_k) \| \nabla \varphi^{\mu}(\xk) \|_{(\hat G_k^{\mu})^{-1}} \leq \| \nabla \tilde \varphi^{\mu}(\xk) \|_{(\hat G_k^{\mu})^{-1}} \leq (1 + \beta_k) \| \nabla \varphi^{\mu}(\xk) \|_{(\hat G_k^{\mu})^{-1}}.
\end{align}

		\begin{lemma}
			\label{lemma.bar_alpha}
			If $\beta_k \leq \tfrac{1 - 2 \nu}{1 + 2 \nu}$ then the relaxed Armijo condition \eqref{eq.relaxed_armijo_bnd} holds for any $\alpha \in (0, {\bar \alpha})$ where $\bar \alpha$ is defined in \eqref{eq.bar_bar_alpha}.
		\end{lemma}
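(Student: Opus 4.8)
The plan is to start from the noise-free one-step decrease estimate of Lemma~\ref{lemma.non-noisy-bnd-iterations}, convert it into a statement about the noisy barrier function $\tilde\varphi^{\mu}$ using Assumption~\ref{assumption.bnd_err}, and then compare the resulting bound with the right-hand side of the relaxed Armijo condition~\eqref{eq.relaxed_armijo_bnd}. Fix an iteration $k$ and $\alpha\in(0,\bar\alpha)$; recall from (the proof of) Lemma~\ref{lemma.non-noisy-bnd-iterations} that $\bar\alpha\le\alpha_k^{\max}$, so that lemma applies. Since $\tilde\varphi^{\mu}(x)=\varphi^{\mu}(x)+\vareps_f(x)$ with $|\vareps_f|\le\eps_f$, we get
\[
\tilde\varphi^{\mu}(\xk+\alpha d_k)-\tilde\varphi^{\mu}(\xk)\ \le\ \varphi^{\mu}(\xk+\alpha d_k)-\varphi^{\mu}(\xk)+2\eps_f\ \le\ -\tfrac{\alpha}{2}\|\na\varphi^{\mu}(\xk)\|_{(\hat G_k^{\mu})^{-1}}^2+\tfrac{\alpha}{2}\|\vareps_g(\xk)\|_{(\hat G_k^{\mu})^{-1}}^2+2\eps_f,
\]
where the second inequality is Lemma~\ref{lemma.non-noisy-bnd-iterations}. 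By the definition~\eqref{eq.kappa_mu_beta} of $\beta_k$, the noise term equals $\tfrac{\alpha}{2}\beta_k^2\|\na\varphi^{\mu}(\xk)\|_{(\hat G_k^{\mu})^{-1}}^2$, so the right-hand side becomes $-\tfrac{\alpha}{2}(1-\beta_k^2)\|\na\varphi^{\mu}(\xk)\|_{(\hat G_k^{\mu})^{-1}}^2+2\eps_f$.

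Next I would handle the right-hand side of~\eqref{eq.relaxed_armijo_bnd}. Since $d_k=-(\hat G_k^{\mu})^{-1}\na\tilde\varphi^{\mu}(\xk)$, we have $\na\tilde\varphi^{\mu}(\xk)^T d_k=-\|\na\tilde\varphi^{\mu}(\xk)\|_{(\hat G_k^{\mu})^{-1}}^2$, and the upper bound in~\eqref{eq.bnd_const_noisy_grad_bounds} gives $\|\na\tilde\varphi^{\mu}(\xk)\|_{(\hat G_k^{\mu})^{-1}}^2\le(1+\beta_k)^2\|\na\varphi^{\mu}(\xk)\|_{(\hat G_k^{\mu})^{-1}}^2$, hence
\[
\nu\alpha\,\na\tilde\varphi^{\mu}(\xk)^T d_k\ \ge\ -\nu\alpha(1+\beta_k)^2\|\na\varphi^{\mu}(\xk)\|_{(\hat G_k^{\mu})^{-1}}^2 .
\]
Combining the two displays, it therefore suffices to show
\[
-\tfrac{\alpha}{2}(1-\beta_k^2)\|\na\varphi^{\mu}(\xk)\|_{(\hat G_k^{\mu})^{-1}}^2+2\eps_f\ \le\ -\nu\alpha(1+\beta_k)^2\|\na\varphi^{\mu}(\xk)\|_{(\hat G_k^{\mu})^{-1}}^2+\eps_R .
\]
Because $\eps_R>2\eps_f$, it is enough to establish the inequality with the constant terms dropped, i.e. $\nu(1+\beta_k)^2\|\na\varphi^{\mu}(\xk)\|_{(\hat G_k^{\mu})^{-1}}^2\le\tfrac12(1-\beta_k^2)\|\na\varphi^{\mu}(\xk)\|_{(\hat G_k^{\mu})^{-1}}^2$.

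If $\na\varphi^{\mu}(\xk)=0$ (equivalently $\|\na\varphi^{\mu}(\xk)\|_{(\hat G_k^{\mu})^{-1}}=0$), then under the hypothesis $\beta_k\le\tfrac{1-2\nu}{1+2\nu}<1$ the bound~\eqref{eq.bnd_const_noisy_grad_bounds} forces $\na\tilde\varphi^{\mu}(\xk)=0$, so $d_k=0$ and~\eqref{eq.relaxed_armijo_bnd} holds trivially. Otherwise divide by $\|\na\varphi^{\mu}(\xk)\|_{(\hat G_k^{\mu})^{-1}}^2>0$, factor $1-\beta_k^2=(1-\beta_k)(1+\beta_k)$, and cancel the positive factor $1+\beta_k$ to reduce the desired inequality to $2\nu(1+\beta_k)\le 1-\beta_k$, i.e. $\beta_k(1+2\nu)\le 1-2\nu$, which is exactly the assumption $\beta_k\le\tfrac{1-2\nu}{1+2\nu}$. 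This closes the argument. The only points requiring care are the degenerate case $\na\varphi^{\mu}(\xk)=0$ and keeping track of the direction of each inequality; the ``hard part'' is really just recognizing that the relaxation $\eps_R>2\eps_f$ absorbs the function-value noise and that the threshold $\tfrac{1-2\nu}{1+2\nu}$ is precisely what makes $\nu(1+\beta_k)\le\tfrac12(1-\beta_k)$ hold.
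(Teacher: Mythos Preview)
Your proof is correct and follows essentially the same approach as the paper: both start from Lemma~\ref{lemma.non-noisy-bnd-iterations}, add $2\eps_f$ to pass to noisy function values, and use the noise-to-signal bounds~\eqref{eq.bnd_const_noisy_grad_bounds} together with $\eps_R>2\eps_f$ to reduce to the algebraic inequality $\nu(1+\beta_k)\le\tfrac12(1-\beta_k)$. The only cosmetic difference is that the paper applies the \emph{lower} bound in~\eqref{eq.bnd_const_noisy_grad_bounds} to convert the left-hand side into a multiple of $\|\nabla\tilde\varphi^{\mu}(\xk)\|_{(\hat G_k^{\mu})^{-1}}^2$ (obtaining the factor $\tfrac{1-\beta_k}{1+\beta_k}$ directly), whereas you keep the true gradient norm and apply the \emph{upper} bound to the Armijo right-hand side; the resulting inequalities are algebraically identical.
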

		\begin{proof}
			By Lemma~\ref{lemma.non-noisy-bnd-iterations} for any $\alpha \in (0, {\bar \alpha})$
			\begin{align*}
				\varphi^{\mu}(\xk + \alpha d_k) - \varphi^{\mu}(\xk) \leq&
				-\tfrac{\alpha}{2} \| \nabla \varphi^{\mu}(\xk) \|_{(\hat G_k^{\mu})^{-1}}^2 + \tfrac{\alpha}{2} \| \vareps_g(\xk) \|_{(\hat G_k^{\mu})^{-1}}^2.
			\end{align*}
			Hence, by Assumption~\ref{assumption.bnd_err}, \eqref{eq.kappa_mu_beta}, and \eqref{eq.bnd_const_noisy_grad_bounds},  one finds that for such an $\alpha$
			\begin{align}
				\label{eq.bnd_const_noisy_phi_ineq}
				\begin{split}
					\tilde \varphi^{\mu}(\xk + \alpha d_k) - \tilde \varphi^{\mu}(\xk) \leq &\,  -\tfrac{\alpha}{2}  \| \nabla \varphi^{\mu}(\xk) \|_{(\hat G_k^{\mu})^{-1}}^2 +  \tfrac{\alpha}{2} \|\vareps_g(\xk)\|_{(\hat G_k^{\mu})^{-1}}^2 + 2 \eps_f \\
					\leq &\, -\tfrac{\alpha}{2}  (1 -\beta_k^2) \| \nabla \varphi^{\mu}(\xk) \|_{(\hat G_k^{\mu})^{-1}}^2 + 2 \eps_f\\
					<& \,  -\tfrac{\alpha}{2}\tfrac{1 - \beta_k }{1 + \beta_k} \| \nabla \tilde \varphi^{\mu}(\xk) \|_{(\hat G_k^{\mu})^{-1}}^2   + \eps_R,
				\end{split}
			\end{align}
			where we used {$\eps_R > 2 \eps_f$} and \eqref{eq.bnd_const_noisy_grad_bounds}  to obtain the last inequality. 
			If $\beta_k \leq \tfrac{1 - 2 \nu}{1 + 2 \nu}$, then $ -\tfrac{1 }{2 } \tfrac{(1 -  \beta_k) }{(1 + \beta_k)} \leq -\nu$ and one finds 
			\begin{align*}
				\tilde \varphi^{\mu}(\xk + \alpha d_k) - \tilde \varphi^{\mu}(\xk)  
				<&    -\nu  \alpha  \| \nabla \tilde \varphi^{\mu}(\xk) \|_{(\hat G_k^{\mu})^{-1}}^2   + \eps_R,
			\end{align*}  
			which together with \eqref{eq.hat_G_k} implies \eqref{eq.relaxed_armijo_bnd}.
		\end{proof}
		
		We can now state the global convergence result for Algorithm~\ref{alg.bnd_const_fixed_mu}. 
		\begin{theorem}
			\label{thm.neigh}
			Let $D$ be a symmetric positive-definite matrix  and $ \gamma\in(0,1)$.
			Consider the set 
			$\CCC(D, \nu,\gamma)$ defined as 
			\begin{equation}\label{eq:defCCC}
				\CCC(D,\nu,\gamma) := \left \{x\in\XXX: \|\nabla \tilde \varphi^{\mu}(x) \|_{D} \leq \max \left\{ \sqrt{\tfrac{\hat\sigma_u^D}{\hat\sigma_l^G }} \left( \tfrac{1 + 2 \nu}{1 - 2 \nu} + 1\right) \eps_g, \sqrt{ \hat \sigma_u^D}\sqrt{\tfrac{4 \eps_f + 2 \eps_R}{{\gamma\bar \alpha} \nu}}\right\} \right\}
			\end{equation}
			where  $\hat \sigma_u^D = \sup_{k} \left\|D^{1/2} \hat G_k^{\mu} D^{1/2}  \right\|$
			and $\bar \alpha$ is defined in \eqref{eq.bar_bar_alpha}.
			Then the iterates $\{\xk\}$ generated by Algorithm~\ref{alg.bnd_const_fixed_mu}  enter $\CCC(D, \nu,\gamma)$ infinitely many times.
		\end{theorem}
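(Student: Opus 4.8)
The plan is a proof by contradiction. Suppose the iterates enter $\CCC(D,\nu,\gamma)$ only finitely often, i.e., there is $K\in\NN$ with $\xk\notin\CCC(D,\nu,\gamma)$ for all $k\geq K$. I will show that then $\varphi^{\mu}(\xkk)\leq\varphi^{\mu}(\xk)-\delta$ for every $k\geq K$ with a fixed $\delta>0$, which forces $\varphi^{\mu}(\xk)\to-\infty$ and contradicts a lower bound on the iterate values. The two terms inside the maximum in \eqref{eq:defCCC} will be used for different purposes: violating the first one guarantees that the noise-to-signal ratio $\beta_k$ is small enough for the relaxed Armijo line search to behave predictably, and violating the second one guarantees that the achieved decrease dominates the noise-induced slack $2\eps_f+\eps_R$.

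The first step is bookkeeping between the fixed norm $\|\cdot\|_{D}$ and the iteration-dependent norm $\|\cdot\|_{(\hat G_k^{\mu})^{-1}}$. From $\sigma_{\min}(\hat G_k^{\mu})\geq\hat\sigma_l^{G}$ (Assumption~\ref{assumption.quasi_Newton_Hess_singular_val}) I get $(\hat G_k^{\mu})^{-1}\preceq(\hat\sigma_l^{G})^{-1}I$, hence $\|\vareps_g(\xk)\|_{(\hat G_k^{\mu})^{-1}}\leq\eps_g/\sqrt{\hat\sigma_l^{G}}$ by Assumption~\ref{assumption.bnd_err}; from $\hat\sigma_u^D=\sup_k\|D^{1/2}\hat G_k^{\mu}D^{1/2}\|$ I get $\hat G_k^{\mu}\preceq\hat\sigma_u^D D^{-1}$, hence $(\hat G_k^{\mu})^{-1}\succeq(\hat\sigma_u^D)^{-1}D$ and therefore $\|v\|_{(\hat G_k^{\mu})^{-1}}\geq\|v\|_{D}/\sqrt{\hat\sigma_u^D}$ for all $v$. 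Now fix $k\geq K$. Violating the first term in \eqref{eq:defCCC} (and noting $\tfrac{1+2\nu}{1-2\nu}+1=\tfrac{2}{1-2\nu}$) combined with these comparisons gives $\|\na\tilde\varphi^{\mu}(\xk)\|_{(\hat G_k^{\mu})^{-1}}>\tfrac{2}{1-2\nu}\|\vareps_g(\xk)\|_{(\hat G_k^{\mu})^{-1}}$; since $\na\tilde\varphi^{\mu}(\xk)=\na\varphi^{\mu}(\xk)+\vareps_g(\xk)$, the triangle inequality and the definition \eqref{eq.kappa_mu_beta} of $\beta_k$ yield $\beta_k<\tfrac{1-2\nu}{1+2\nu}$. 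I expect this transfer --- from information about $\|\na\tilde\varphi^{\mu}(\xk)\|_{D}$ (the noisy gradient in the fixed norm) to a bound on $\beta_k$ (the true gradient in the scaled norm) --- to be the main obstacle, since all the scaling constants must be tracked carefully.

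With $\beta_k<\tfrac{1-2\nu}{1+2\nu}$, Lemma~\ref{lemma.bar_alpha} applies, so \eqref{eq.relaxed_armijo_bnd} holds for every $\alpha\in(0,\bar\alpha)$, and $\bar\alpha\leq\alpha_k^{\max}$ as in the proof of Lemma~\ref{lemma.non-noisy-bnd-iterations}. Because Algorithm~\ref{alg.bnd_const_fixed_mu} backtracks by halving from $\alpha_k^{\max}\geq\bar\alpha$, the accepted step satisfies $\alpha_k\geq\bar\alpha/2$ (if $\alpha_k<\bar\alpha$, the last rejected trial $2\alpha_k$ must satisfy $2\alpha_k\geq\bar\alpha$, for otherwise Lemma~\ref{lemma.bar_alpha} would have forced its acceptance). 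Substituting $d_k=-(\hat G_k^{\mu})^{-1}\na\tilde\varphi^{\mu}(\xk)$ into \eqref{eq.relaxed_armijo_bnd} and using $|\tilde\varphi^{\mu}-\varphi^{\mu}|\leq\eps_f$ (from \eqref{eq.f_err}) gives $\varphi^{\mu}(\xkk)\leq\varphi^{\mu}(\xk)-\nu\alpha_k\|\na\tilde\varphi^{\mu}(\xk)\|_{(\hat G_k^{\mu})^{-1}}^2+2\eps_f+\eps_R$. Violating the second term in \eqref{eq:defCCC}, together with the $\|\cdot\|_{D}$-versus-$\|\cdot\|_{(\hat G_k^{\mu})^{-1}}$ comparison, yields $\|\na\tilde\varphi^{\mu}(\xk)\|_{(\hat G_k^{\mu})^{-1}}^2>\tfrac{4\eps_f+2\eps_R}{\gamma\bar\alpha\nu}$, so with $\alpha_k\geq\bar\alpha/2$ the decrease term exceeds $(2\eps_f+\eps_R)/\gamma$. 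Since $\gamma<1$ and $\eps_R>0$,
\[
\varphi^{\mu}(\xkk)\leq\varphi^{\mu}(\xk)-\delta,\qquad \delta:=(2\eps_f+\eps_R)\,\tfrac{1-\gamma}{\gamma}>0,\qquad\text{for all }k\geq K.
\]

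It remains to close the contradiction, and here a small subtlety matters: $\varphi^{\mu}$ need not be bounded below on $\XXX$ (the $\log$-barrier term can diverge to $-\infty$ if $\XXX$ is unbounded), so an outright lower bound on $\varphi^{\mu}$ is not available. Instead, telescoping gives $\varphi^{\mu}(x_{K+m})\leq\varphi^{\mu}(x_K)-m\delta$, while $\|\xkk-\xk\|=\alpha_k\|d_k\|\leq\Delta_d$ (by $\alpha_k\leq1$ and Lemma~\ref{lemma.bnd_d}) gives $\|x_{K+m}\|\leq\|x_K\|+m\Delta_d$; combining this with $\xk\geq\delta_x\mathbf{e}$ (Lemma~\ref{lemma.lb_on_x_bnd_const}) and with $f\geq f_{\min}$ on $\XXX$ (Assumption~\ref{assumption.bnd_const_grad_lipschitz}) yields $\varphi^{\mu}(x_{K+m})\geq f_{\min}-\mu n\log\big(\max\{\|x_K\|+m\Delta_d,1\}\big)$. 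Hence $m\delta$ would be bounded above by a quantity growing only logarithmically in $m$, which is impossible. Therefore the iterates enter $\CCC(D,\nu,\gamma)$ infinitely often.
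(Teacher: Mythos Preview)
Your proof is correct and follows essentially the same strategy as the paper's: assume $\xk\notin\CCC$ for all large $k$, use violation of the first term in \eqref{eq:defCCC} together with the norm comparison $\|\cdot\|_{(\hat G_k^{\mu})^{-1}}\geq(\hat\sigma_u^D)^{-1/2}\|\cdot\|_D$ to obtain $\beta_k<\tfrac{1-2\nu}{1+2\nu}$, invoke Lemma~\ref{lemma.bar_alpha} to get $\alpha_k\geq\bar\alpha/2$, and then use violation of the second term to extract a fixed per-step decrease in $\varphi^{\mu}$.

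One difference worth highlighting: the paper concludes by asserting that ``$\varphi$ is bounded below by Assumption~\ref{assumption.bnd_const_grad_lipschitz},'' but that assumption only bounds $f$ from below, and the barrier term $-\mu\sum_i\log(x_{k,i})$ can in principle diverge to $-\infty$ if components of $\xk$ grow without bound. You noticed this gap and closed it cleanly by comparing the linear-in-$m$ decrease of $\varphi^{\mu}$ against the at-most-logarithmic-in-$m$ growth of the barrier term (using $\|x_{K+m}\|\leq\|x_K\|+m\Delta_d$ from Lemma~\ref{lemma.bnd_d}). This is a genuine refinement of the paper's argument.
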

		\begin{proof}
			Let $\hat k \in \NN$ and $k\geq \hat k$ so that $\xk \notin \CCC(D,\nu,\gamma)$. By the definition of $\CCC(D,\nu,\gamma)$ 
			\begin{align}
				\label{eq.notin_tilde_c}
				\left\| \nabla \tilde \varphi^{\mu}(\xk)\right\|_{D} > \sqrt{\tfrac{\hat\sigma_u^D}{\hat\sigma_l^G }}\left(\tfrac{1 + 2 \nu}{1 - 2 \nu} + 1 \right)\eps_g, \ \text{and} \ 
				\|\nabla \tilde \varphi^{\mu}(\xk)\|_D > \sqrt{\hat \sigma_u^D} \sqrt{\tfrac{4 \eps_f + 2\eps_R}{{\gamma\bar \alpha} \nu }}.
			\end{align}
			The definition of $ \hat \sigma_u^D$ implies $ \left\|D^{1/2} (\hat G_k^{\mu})^{1/2} \right\| \leq \sqrt{\hat \sigma_u^D}$. Since $\hat G_k^{\mu}$ is invertible, we have $ \left\|\nabla \tilde \varphi^{\mu}(\xk)\right\|_{D} \leq  \left\|D^{1/2} (\hat G_k^{\mu})^{1/2} \right\| \left\| (\hat G_k^{\mu})^{-1/2} \nabla \tilde \varphi^{\mu}(\xk)\right\|
			$. This, along with the first inequality in \eqref{eq.notin_tilde_c} implies
			\begin{align}
				\label{eq.global_conv_tmp1}
				\begin{split}
					\sqrt{\hat \sigma_u^D}\left\| \nabla \tilde \varphi^{\mu}(\xk)\right\|_{(\hat G_k^{\mu})^{-1}}
					& \geq \left\|\nabla \tilde \varphi^{\mu}(\xk)\right\|_{D} 
					>   \sqrt{\tfrac{\hat \sigma_u^D }{\hat \sigma_l^G }}\left(\tfrac{1 + 2 \nu}{1 - 2 \nu} + 1 \right)\eps_g \\
					&\geq  \sqrt{ \hat \sigma_u^D }\left(\tfrac{1 + 2 \nu}{1 - 2 \nu} + 1 \right)\|\vareps_g(\xk) \|_{(\hat G_k^{\mu})^{-1}},
				\end{split}
			\end{align}
			because $\|\vareps_g(\xk) \|_{(\hat G_k^{\mu})^{-1}}\leq 1/\sqrt{\hat \sigma_l^G}\cdot\epsilon_g$ and \eqref{eq:sigma_l_G}.
			By Assumption~\ref{assumption.bnd_err}, we have $\| \nabla \tilde \varphi^{\mu}(\xk)\|_{(\hat G_k^{\mu})^{-1}} \leq  \|\vareps_{g}(\xk)\|_{(\hat G_k^{\mu})^{-1}} + \|\nabla  \varphi^{\mu}(\xk) \|_{(\hat G_k^{\mu})^{-1}}$. Together this yields
			\begin{align}
				\label{eq.tmp1}
				\|\nabla  \varphi^{\mu}(\xk)\|_{(\hat G_k^{\mu})^{-1}} >&\; \left(\tfrac{1 + 2 \nu}{1 - 2 \nu} \right)\|\vareps_g(\xk)\|_{(\hat G_k^{\mu})^{-1}}.
			\end{align}
			Hence, by Lemma~\ref{lemma.bar_alpha} and Assumption~\ref{assumption.bnd_err} one concludes that the backtracking line-search finds $\alpha_k >  \bar \alpha/2$  satisfying the relaxed Armijo condition, that is
			\begin{align} 
				\label{eq.phi_c1_condition}
				\begin{split}
					\varphi^{\mu}(\xk + \alpha_k d_k) - \varphi^{\mu}(\xk) \leq&\;  \tilde \varphi^{\mu}(\xk + \alpha_k d_k) - \tilde \varphi^{\mu}(\xk) + 2 \eps_f \\
					<&\; - \tfrac{\bar \alpha}{2} \nu \left\| \nabla \tilde \varphi^{\mu}(\xk) \right\|_{(\hat G_k^{\mu})^{-1}}^2 + 2 \eps_f + \eps_R. 
				\end{split}
			\end{align}
			The second inequality in \eqref{eq.notin_tilde_c}, along with the first inequality in \eqref{eq.global_conv_tmp1} yields
			\[
			\|\nabla \tilde \varphi^{\mu}(\xk)\|_{(\hat G_k^{\mu})^{-1}} > \sqrt{\tfrac{4\eps_f +  2\eps_R}{\gamma{\bar \alpha} \nu }},
			\]
			which in combination with \eqref{eq.phi_c1_condition} implies
			\[
			\varphi^{\mu}(\xk + \alpha_k d_k) -  \varphi^{\mu}(\xk) <\, (1-\tfrac{1}{\gamma}) (2\eps_f +  \eps_R) <\, 0.
			\]
			Since $\varphi$ is bounded below by Assumption~\ref{assumption.bnd_const_grad_lipschitz},  $\xk \notin \CCC(D,\nu,\gamma)$ cannot hold for all $k\geq \hat k$, where $\hat k$ can be arbitrarily large.
		\end{proof}
		
		This theorem presents a global convergence result in the sense that it states that the gradient is small for infinitely many iterations.   The set $\CCC(D,\nu,\gamma)$ is defined in terms of the noisy gradient, but it can easily be transferred into a condition using the norm of the non-noisy gradient $\|\nabla \varphi^\mu(\xk)\|_D$, by adding $\epsilon_g$ to the right-hand side, multiplied by a constant that accounts for the difference between $\|\cdot\|_D$ and $\|\cdot\|$.
		Since $\gamma\in(0,1)$ can be chosen arbitrarily close to one, an alternative way to state this result is that there exists a limit point of $\xk$ that is in $\CCC(D,\nu,1)$.
		
		Different choices for the norm $\|\cdot\|_D$ are possible, resulting in different conditions.
		For instance, for $D=I$, we have on the right-hand side $\hat \sigma_u^D=\max_k\|\hat H_k+\mu (\Xk)^{-2}\|$, a quantity that converges to infinity as $\mu\to0$ if $\xk\approx x_*^{\mu}$. (Recall that $\mu (X_*^{\mu})^{-2}= Z^\mu_*(X^\mu_*)^{-1}$ due to \eqref{eq.kktmu}.) At the same time, though, $\left\|\nabla \tilde \varphi^{\mu}(\xk)\right\|= \left\|\tilde g(\xk) - \mu (\Xk)^{-1}\mathbf{e}\right\|$ is a bounded entity. This makes the result essentially meaningless for $D=I$ as $\mu\to0$.
		
		On the other hand, choosing $D=(X^\mu_*)^{-2}$ yields $\hat \sigma_u^D=\sup_k\|X^\mu_*\hat H_kX^\mu_*+\mu I\|$ and the spurious high curvature induced by the barrier term is neutralized, leaving behind a matrix that becomes a scaled version of the reduced Hessian as $\mu\to0$.  Finally, $D=G^\mu(x_*^\mu)=\nabla^2 f(x_*^\mu)+\mu (X_*^\mu)^{-2}$ provides a scale-invariant measure and, if $\hat H_k\approx \nabla^2 f(x_*^\mu)$, yields $\hat \sigma_u^D\approx1$.  This is what is used in the stopping test below.

		\subsection{A practical stopping test}
		\label{sec.stopping_test}
		In this section, having established the global convergence result in Theorem~\ref{thm.neigh}, we take our findings a step further by providing a practical stopping test for Algorithm~\ref{alg.bnd_const_fixed_mu}.
		The practicality of this stopping test lies in the fact that it does not depend on unknown parameters such as $\hat \sigma_u^D$, $\hat\sigma_l^G$, and $\bar{\alpha}$. Instead, all these parameters have been replaced by their iteration-dependent counterparts which can be computed as the algorithm runs.  

		This result is formally presented in the following theorem.
		
		\begin{theorem}
			\label{thm.practical_termination_v2}
			There exists $k \in \NN$ such that
			\begin{enumerate} [label=(\roman*)]
				\item \label{it.stop1_v2} $\left\|\nabla \tilde \varphi^{\mu}(x_k^\mu) \right\|_{(\hat G_k^{\mu})^{-1}} \leq T_{1,k}$, or 
				\item \label{it.stop2_v2} $\left\|\nabla \tilde \varphi^{\mu}(x_k^\mu) \right\|_{(\hat G_k^{\mu})^{-1}} > T_{1,k}$ and $\left\|\nabla \tilde \varphi^{\mu}(x_k^\mu) \right\|_{(\hat G_k^{\mu})^{-1}} \leq T_{2,k}$,
			\end{enumerate}
			where
			\begin{equation}\label{eq:Tk}
				T_{1,k} = {\tfrac{1}{\sqrt{\hat \sigma_{l,k}^G}}} \left( \tfrac{1 + 2 \nu_k}{1 - 2 \nu_k} + 1 \right) \eps_g \quad\text{ and }\quad T_{2,k} = \sqrt{\tfrac{2 \eps_f + \eps_R}{\gamma \alpha_k \nu_k}},
			\end{equation}
			$\gamma \in (0, 1)$ is a fixed constant that can be chosen arbitrarily close to one, $ \hat \sigma_{l,k}^G =  {\sigma_{\min}(\hat G_k^{\mu})}$, and $\nu_k$ is an arbitrarily chosen scalar in $[\nu,\hat\nu_{1,k}]$, where
			\begin{equation}\label{eq:hatnu1}
				\hat \nu_{1,k} := \tfrac{\tilde \varphi(x_k) - \tilde \varphi(x_k + \alpha_k d_k) + \eps_R}{\alpha_k \|\nabla \tilde \varphi^\mu(x_k^\mu) \|_{(\hat G_k^\mu)^{-1}}^2} \geq \nu.
			\end{equation}
			Furthermore, such iteration satisfies $\xk\in \CCC\left((\hat G_k^{\mu})^{-1},\nu_k,\gamma\right)$.
		\end{theorem}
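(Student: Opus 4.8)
The first thing to settle is the inequality $\hat\nu_{1,k}\ge\nu$ in \eqref{eq:hatnu1}, which makes $\nu_k\in[\nu,\hat\nu_{1,k}]$ well defined (and, as is implicitly needed for $T_{1,k}$ and used below, $\nu_k<\tfrac12$). This is immediate from the fact that the backtracking loop in Algorithm~\ref{alg.bnd_const_fixed_mu} always terminates with an $\alpha_k$ satisfying \eqref{eq.relaxed_armijo_bnd}: substituting $d_k=-(\hat G_k^{\mu})^{-1}\nabla\tilde\varphi^{\mu}(x_k^\mu)$, so that $\nabla\tilde\varphi^{\mu}(x_k^\mu)^Td_k=-\|\nabla\tilde\varphi^{\mu}(x_k^\mu)\|_{(\hat G_k^{\mu})^{-1}}^2$, and rearranging yields $\tilde\varphi^{\mu}(x_k^\mu)-\tilde\varphi^{\mu}(x_k^\mu+\alpha_kd_k)+\eps_R\ge\nu\alpha_k\|\nabla\tilde\varphi^{\mu}(x_k^\mu)\|_{(\hat G_k^{\mu})^{-1}}^2$, i.e. $\hat\nu_{1,k}\ge\nu$. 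Next, note that the conjunction of the negations of \ref{it.stop1_v2} and \ref{it.stop2_v2} is exactly $\|\nabla\tilde\varphi^{\mu}(x_k^\mu)\|_{(\hat G_k^{\mu})^{-1}}>\max\{T_{1,k},T_{2,k}\}$; in particular it implies $\|\nabla\tilde\varphi^{\mu}(x_k^\mu)\|_{(\hat G_k^{\mu})^{-1}}>T_{2,k}$. So it suffices to show that this cannot hold for every $k$, and then to verify the $\CCC$-membership claim for any $k$ at which \ref{it.stop1_v2} or \ref{it.stop2_v2} holds.

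The existence claim I would prove by contradiction, in the spirit of the proof of Theorem~\ref{thm.neigh}. Assume $\|\nabla\tilde\varphi^{\mu}(x_k^\mu)\|_{(\hat G_k^{\mu})^{-1}}>T_{2,k}$ for all $k$. By the very definition of $\hat\nu_{1,k}$ in \eqref{eq:hatnu1} together with $\hat\nu_{1,k}\ge\nu_k$, one has $\tilde\varphi^{\mu}(x_k^\mu)-\tilde\varphi^{\mu}(x_{k+1}^\mu)=\hat\nu_{1,k}\,\alpha_k\|\nabla\tilde\varphi^{\mu}(x_k^\mu)\|_{(\hat G_k^{\mu})^{-1}}^2-\eps_R\ge\nu_k\,\alpha_k\|\nabla\tilde\varphi^{\mu}(x_k^\mu)\|_{(\hat G_k^{\mu})^{-1}}^2-\eps_R$, while squaring the assumed inequality and multiplying by $\nu_k\alpha_k$ gives $\nu_k\alpha_k\|\nabla\tilde\varphi^{\mu}(x_k^\mu)\|_{(\hat G_k^{\mu})^{-1}}^2>\tfrac{2\eps_f+\eps_R}{\gamma}$ --- here the step size $\alpha_k$ cancels, which is the point of the definition of $T_{2,k}$. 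Hence $\tilde\varphi^{\mu}(x_k^\mu)-\tilde\varphi^{\mu}(x_{k+1}^\mu)>\tfrac{2\eps_f+\eps_R}{\gamma}-\eps_R$, and since the $\log$-terms are exact so that $|\tilde\varphi^{\mu}-\varphi^{\mu}|\le\eps_f$ pointwise, this gives $\varphi^{\mu}(x_k^\mu)-\varphi^{\mu}(x_{k+1}^\mu)>(2\eps_f+\eps_R)\bigl(\tfrac1\gamma-1\bigr)=:c$, with $c>0$ because $\gamma\in(0,1)$ and $\eps_R>2\eps_f\ge0$. Summing over $k$ forces $\varphi^{\mu}(x_k^\mu)\to-\infty$, contradicting that $\varphi^{\mu}$ is bounded below (as invoked in the proof of Theorem~\ref{thm.neigh}). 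This proves that \ref{it.stop1_v2} or \ref{it.stop2_v2} holds at some $k$.

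For the ``Furthermore'' claim, fix such a $k$ and put $D=(\hat G_k^{\mu})^{-1}$. Using the index $k$ itself in the supremum defining $\hat\sigma_u^D$ gives $\|D^{1/2}\hat G_k^{\mu}D^{1/2}\|=\|I\|=1$, hence $\hat\sigma_u^D\ge1$; also $\hat\sigma_l^G\le\sigma_{\min}(\hat G_k^{\mu})=\hat\sigma_{l,k}^G$ by Assumption~\ref{assumption.quasi_Newton_Hess_singular_val}, so $\tfrac1{\sqrt{\hat\sigma_{l,k}^G}}\le\sqrt{\tfrac{\hat\sigma_u^D}{\hat\sigma_l^G}}$. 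If \ref{it.stop1_v2} holds, then $\|\nabla\tilde\varphi^{\mu}(x_k^\mu)\|_D\le T_{1,k}\le\sqrt{\tfrac{\hat\sigma_u^D}{\hat\sigma_l^G}}\bigl(\tfrac{1+2\nu_k}{1-2\nu_k}+1\bigr)\eps_g$, which is at most the first term inside the $\max$ in \eqref{eq:defCCC}, so $x_k^\mu\in\CCC\bigl((\hat G_k^{\mu})^{-1},\nu_k,\gamma\bigr)$. If instead \ref{it.stop2_v2} holds, then from $\|\nabla\tilde\varphi^{\mu}(x_k^\mu)\|_{(\hat G_k^{\mu})^{-1}}>T_{1,k}$ and $\|\vareps_g(x_k^\mu)\|_{(\hat G_k^{\mu})^{-1}}\le\eps_g/\sqrt{\hat\sigma_{l,k}^G}$ one gets $\|\nabla\tilde\varphi^{\mu}(x_k^\mu)\|_{(\hat G_k^{\mu})^{-1}}>\bigl(\tfrac{1+2\nu_k}{1-2\nu_k}+1\bigr)\|\vareps_g(x_k^\mu)\|_{(\hat G_k^{\mu})^{-1}}$, so the triangle-inequality estimate used in \eqref{eq.global_conv_tmp1}--\eqref{eq.tmp1} yields $\beta_k<\tfrac{1-2\nu_k}{1+2\nu_k}\le\tfrac{1-2\nu}{1+2\nu}$. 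Lemma~\ref{lemma.bar_alpha} then shows \eqref{eq.relaxed_armijo_bnd} holds for every $\alpha\in(0,\bar\alpha)$, and since backtracking starts from $\alpha_k^{\max}\ge\bar\alpha$ (as in the proof of Lemma~\ref{lemma.non-noisy-bnd-iterations}) and halves, the accepted step obeys $\alpha_k\ge\bar\alpha/2$; therefore $\|\nabla\tilde\varphi^{\mu}(x_k^\mu)\|_{(\hat G_k^{\mu})^{-1}}\le T_{2,k}=\sqrt{\tfrac{2\eps_f+\eps_R}{\gamma\alpha_k\nu_k}}\le\sqrt{\tfrac{4\eps_f+2\eps_R}{\gamma\bar\alpha\nu_k}}\le\sqrt{\hat\sigma_u^D}\sqrt{\tfrac{4\eps_f+2\eps_R}{\gamma\bar\alpha\nu_k}}$, which is the second term inside the $\max$, so again $x_k^\mu\in\CCC\bigl((\hat G_k^{\mu})^{-1},\nu_k,\gamma\bigr)$.

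The genuinely routine parts are the algebraic manipulation of the relaxed Armijo inequality and the conversions between the norms $\|\cdot\|_D$ and $\|\cdot\|_{(\hat G_k^{\mu})^{-1}}$. The step that needs care --- and the main obstacle I would flag --- is the last paragraph: one must split on which of \ref{it.stop1_v2} or \ref{it.stop2_v2} is active, because only under \ref{it.stop2_v2} does one obtain and need the step-size bound $\alpha_k\ge\bar\alpha/2$, and that bound rests on $T_{1,k}$ being large enough to push the noise-to-signal ratio $\beta_k$ below $\tfrac{1-2\nu}{1+2\nu}$ so that Lemma~\ref{lemma.bar_alpha} applies. The key structural observation making the practical test valid is that $\alpha_k$ cancels in the decrease estimate against $T_{2,k}^2$, so the iteration-dependent step size enters only the $\CCC$-membership bound (replacing $\bar\alpha$), never the contradiction.
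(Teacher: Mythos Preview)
Your proof is correct and follows essentially the same route as the paper's: establish $\hat\nu_{1,k}\ge\nu$ from the accepted Armijo step, derive a uniform positive decrease in $\varphi^\mu$ under the assumption $\|\nabla\tilde\varphi^\mu_k\|_{(\hat G_k^\mu)^{-1}}>T_{2,k}$ to get the contradiction, and then split on \ref{it.stop1_v2} versus \ref{it.stop2_v2} for the $\CCC$-membership, invoking Lemma~\ref{lemma.bar_alpha} in case \ref{it.stop2_v2} to obtain $\alpha_k>\bar\alpha/2$. Your observation that $\hat\sigma_u^D\ge1$ (rather than $=1$) is in fact the more careful statement, since $\hat\sigma_u^D$ is a supremum over all iterations, and $\ge1$ is all that is needed.
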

		
		\begin{proof}
			Suppose that $\| \nabla \tilde \varphi^{\mu}(x_k^\mu) \|_{(\hat G_k^{\mu})^{-1}}>T_{2,k}$ for all $k\in\NN$.
			Observe that $\hat \nu_{1,k} $ is the largest value of $\nu$ such that the Armijo condition \eqref{eq.relaxed_armijo_bnd} holds for the step size $\alpha_k$  in the $k$th iteration.  Therefore, $\hat \nu_{1,k} \geq\nu$ and
			\begin{align*}
				\varphi^{\mu}(\xk + \alpha_k d_k) - \varphi^{\mu}(\xk) &\leq \tilde \varphi^{\mu}(\xk + \alpha_k d_k) - \tilde \varphi^{\mu}(\xk) + 2 \eps_f \\
				& \leq  - \alpha_k \nu_k \|\nabla \tilde \varphi^\mu(x_k^\mu)\|_{(\hat G_k^{\mu})^{-1}}^2 + 2 \eps_f + \eps_R \\
				&<  \left(1 - \tfrac{1}{\gamma}\right) \left(2 \eps_f + \eps_R\right) < 0,
			\end{align*}
			where we used the definition of $T_{2,k}$. But this cannot hold for all $k\in\NN$ since $\varphi^\mu$ is bounded below. Hence, there exists $k\in\NN$ such that $\| \nabla \tilde \varphi^{\mu}(x_k^\mu) \|_{(\hat G_k^{\mu})^{-1}}\leq T_{2,k}$, and therefore one of the conditions~\ref{it.stop1_v2} and \ref{it.stop2_v2} must hold.
			
			Next, let $k \in \NN$ such that condition~\ref{it.stop1_v2} or condition~\ref{it.stop2_v2} holds.  If \ref{it.stop1_v2}  holds, recall that $\hat\sigma_u^D=1$ for $D=(\hat G_k^{\mu})^{-1}$ and $\hat \sigma_{l,k}^G\geq \hat\sigma_l^G$.  This implies that $T_{k,1}$ is not larger than the first term in the $\max$-expression in \eqref{eq:defCCC}, and therefore $\xk\in \CCC\left((\hat G_k^{\mu})^{-1},\nu_k,\gamma\right)$.
			
			Now suppose that \ref{it.stop2_v2} holds. Since $\hat \sigma_{l,k}^G = \sigma_{\min}\left(\hat G_k^{\mu}\right)$, one has $\tfrac{\eps_g}{\sqrt{\hat \sigma_{l,k}^G}} \geq \|\vareps_g(\xk)\|_{(\hat G_k^{\mu})^{-1}}$.
			This, together with $\| \nabla \varphi^\mu(\xk) \|_{(\hat G_k^{\mu})^{-1}} + \|\vareps_g(\xk) \|_{(\hat G_k^{\mu})^{-1}}\geq \| \nabla \tilde \varphi^\mu(\xk) \|_{(\hat G_k^{\mu})^{-1}}$ and the first inequality in \ref{it.stop2_v2} implies 
			\begin{align*}
				\| \nabla \varphi^\mu(\xk) \|_{(\hat G_k^{\mu})^{-1}} + \|\vareps_g(\xk) \|_{(\hat G_k^{\mu})^{-1}}
				>
				\left( \tfrac{1 + 2 \nu_k}{1 - 2 \nu_k} + 1 \right) \|\vareps_g(\xk) \|_{(\hat G_k^{\mu})^{-1}},
			\end{align*}
			and by \eqref{eq.kappa_mu_beta}, one has
			$\beta_k < \tfrac{1 - 2 \nu_k}{1 + 2 \nu_k}$. Since $\nu_k \geq \nu$, one also has that
			$\beta_k < \tfrac{1 - 2 \nu}{1 + 2 \nu}$. Hence, by Lemma~\ref{lemma.bar_alpha} one concludes that \eqref{eq.relaxed_armijo_bnd} is satisfied with $\alpha_k > \tfrac{\bar \alpha}{2}$. 
			Consequently, $T_{k,2}$ is not larger than the second term in the $\max$-expression in \eqref{eq:defCCC}, and the second inequality in \ref{it.stop2_v2} yields $\xk\in \CCC((\hat G_k^{\mu})^{-1},\nu_k,\gamma)$.   
		\end{proof}
		
		\begin{remark}\label{rem:balanceT}
			This theorem can be used to define a practical termination test.  During the run of the algorithm, we can check in each iteration whether condition \ref{it.stop1_v2} or \ref{it.stop2_v2} in Theorem \ref{thm.practical_termination_v2} holds.  If that is the case, we can conclude that the noisy gradient of the barrier function at the current iterate $\xk$ is as small as we were able to prove in the theoretical convergence Theorem~\ref{thm.neigh} (if we choose $\nu_k=\nu$), without the need to know the values of the parameters $\hat\sigma_l^G$ and $\bar\alpha$.  Since these values are pessimistic uniform estimates, the use of the actual values $\hat\sigma_{l,k}^G$ and $\alpha_k$ in $T_{1,k}$ and $T_{k,2}$ typically results in a smaller final value of $\left\|\nabla \tilde \varphi^{\mu}(\xk) \right\|_{(\hat G_k^{\mu})^{-1}}$ than what Theorem~\ref{thm.neigh} predicts.
			
			By keeping the choice of the Armijo condition parameter $\nu_k$ flexible, we can tighten the final tolerance even further.  The tolerances $T_{1,k}$ and $T_{2,k}$ are monotonically increasing and decreasing, respectively, as $\nu_k$ becomes larger.  To obtain the tightest bound we choose the value that balances the two tolerances
			, namely
			\[
			\hat \nu_{2,k} := \tfrac{(2 \eps_f + \eps_R)\hat \sigma_{l,k}^G + \gamma \alpha_k \eps_g^2 - \sqrt{\gamma^2 \alpha_k^2 \eps_g^4 + 2 (2 \eps_f + \eps_R )\hat \sigma_{l,k}^G \gamma \alpha_k \eps_g^2} }{2 (2 \eps_f + \eps_R)\hat \sigma_{l,k}^G }.
			\]
			However, since the assumptions of Theorem \ref{thm.practical_termination_v2} require that $\nu_k\in[\nu,\hat\nu_{1,k}]$, the tightest value we can choose (and use in our numerical experiments) is
			\begin{equation}
				\label{eq.nu_k}
				\nu_k := \max\left\{\nu, \min\left\{\hat \nu_{1,k}, \hat \nu_{2,k} \right\} \right\}.
			\end{equation}
		\end{remark}

		\section{Local convergence} 
		\label{sec.local_conv}
		
		\newcommand{\Bdelta}{\bar B_\delta(x^\mu_*)}
		\newcommand{\Bbardelta}{\bar B_{\bar\delta}(x^\mu_*)}

		In this section, we examine the behavior of the iterates when they are close to a local optimal solution satisfying the second-order optimality conditions and when Newton-type steps are taken with a noisy approximation of the Hessian matrix.
		First, we derive in Section~\ref{sec:local_quadratic} a noisy version of a local quadratic convergence rate, presented in Theorem~\ref{thm_main_neigh_1b}.  This permits us to show in Section~\ref{sec:local_neighborhood} the existence of ``neighborhoods of confusion'' around the optimal solution in which the iterates remain once entered.  
		%
		%
		A significant effort is made to keep all constants independent of the barrier parameter $\mu$ because this allows us to eventually argue that interior-point methods can identify the active set (for non-degenerate problems) despite noise.
		We emphasize that we do not require that the problem is convex.
		
		Throughout this section, we suppose that Assumptions~\ref{as:suff_optcond}, \ref{assumption.bnd_err}, \ref{assumption.bnd_const_grad_lipschitz}, and \ref{as:Hbounded} hold for a local primal-dual optimal solution $(x_*^0,z_*^0)$ of the original bound-constrained problem~\eqref{eq.orig_bnd_const_prob}. In what follows, $(x_*^\mu,z_*^{\mu})$ for $\mu\in[0,\hat\mu]$ denotes the primal-dual optimal solutions 
		along the differentiable central path, as defined in Theorem~\ref{thm:central_path}.
		In the analysis, neighborhoods are defined in a scaled norm that takes the geometry of the barrier terms into account.  For a given radius $\delta \in \RR_{>0}$, we define the open ball
		\begin{equation}\label{eq:def_neighborhood}
			B_\delta(x^\mu_*) = \left\{x\in\RR^n : \|x-x^\mu_*\right\|_{(X^\mu_*)^{-2}}<\delta \},
		\end{equation}
		and we denote its closure with $\Bdelta$. 
		Without loss of generality, we assume that $x_*^\mu=(x_{*,\AAA}^\mu,x_{*,\III}^\mu)$ ($\AAA$ and $\III$ defined in Assumption~\ref{as:suff_optcond}) and 
		we use $\xk$ to denote the iterates
		generated by Algorithm~\ref{alg.bnd_const_fixed_mu} when it is executed for a fixed 
		$\mu$ and define
		\begin{equation}
			\label{eq.G}
			G^\mu(x) = \,  H(x)  + \mu X^{-2}.  
		\end{equation}
		We let $H^\mu_k=H(\xk)$, $G^\mu_k=G(\xk)$, $H_*^\mu=H(x^\mu_*)$, and $G_*^\mu=G(x^\mu_*)$, and we use similar simplifications for the noisy quantities.
		We also define $z_k^\mu = \mu (X_k^\mu)^{-1}\mathbf e$.

		This part of our analysis relies on the contraction property of Newton's method, extended to the noisy case as analyzed in \cite{kelley2022newton}.  Our work goes beyond that in \cite{kelley2022newton} by allowing an unbounded objective function $\varphi^\mu$ and deriving constants that are independent of $\mu$. 
		For this analysis to be applicable, the matrix $\hat H_k$ in \eqref{eq.hat_G_k} cannot be chosen as freely as in Section~\ref{sec.global_conv}.  Instead, we need to assume that the algorithm uses a sufficiently accurate noisy approximation of the true second derivatives $\tilde H_k^\mu\approx\na_{xx}^2 f(\xk)$ when an iterate is close to the local minimum $x_*^\mu$.
		More precisely, whenever the noisy approximation of $\na^2\varphi^\mu(\xk)$, given by $\tilde G_k^{\mu} := \tilde H_k^\mu + \mu (\Xk)^{-2}$, satisfies
		\begin{equation}\label{eq:tildesigma_l_G}
			\sigma_{\min}(\tilde G_k^{\mu})\geq \tilde\sigma_l^G,
		\end{equation}
		for a fixed value $\tilde\sigma_l^G>0$, the algorithm chooses $\hat G_k^\mu=\tilde G_k^\mu$ in Step~\ref{st.compute_dir} and computes the search direction from
		\begin{equation}\label{eq.noisy_d_N}
			d_k = -(\tilde G_k^\mu)^{-1}\na \tilde \varphi^\mu_k.
		\end{equation}
		Lemma~\ref{lemma.G_unif_posdef_iter} guarantees the existence of a value for $\tilde\sigma_l^G>0$ so that \eqref{eq:tildesigma_l_G} holds when $\mu$ is sufficiently small and $x_k$ is sufficiently close to $x_*^\mu$, assuming that the noise $\|\tilde H_k^\mu-\na^2 f(\xk)\|$ is not too large.
		In this way, Assumption~\ref{assumption.quasi_Newton_Hess_singular_val} remains satisfied and the convergence analysis in Section~\ref{sec.global_conv} still applies. At the same time, the algorithm takes steps based on a noisy Newton-type direction close to the local minimum.

		We make the following standard assumption throughout this section.
		Recall that $\XXX$ is defined in Assumption~\ref{assumption.bnd_const_grad_lipschitz}.
		\begin{assumption}
			\label{assumption.bnd_const_hess_lipschitz_1}
			There exists an open set $\XXX_l$ satisfying $\cup_{\mu\in(0,\hat\mu]}\bar B_1(x_*^\mu)\subseteq\XXX_l\subseteq\XXX$ so that the Hessian $H(x)$ exists and is  Lipschitz continuous over $\mathcal{X}$ with constant $L_H \in \RR_{>0}$, i.e., for all $x, y \in \mathcal{X}_l $, we have
			$
			\| H(x) - H(y) \| \leq L_H \|x - y\|.
			$
		\end{assumption}
		In order to state the permissible noise level for the Hessian matrix, we define
		\[
		\Gamma_*^0 := H_*^0X_*^0 + Z_*^0 =
		\begin{pmatrix}
			Z^0_{*,\AAA} & H^0_{*,\AAA\III}X_{*,\III}^0 \\ 
			0 & H^0_{*,\III\III}X_{*,\III}^0 
		\end{pmatrix}.
		\]
		The second equality holds because $x_*^0=(0,x_{*,\III}^0)$ and $z_*^0=(z_{*,\AAA}^0,0)$ due to complementarity.
		Note that this matrix is nonsingular, due to Assumption~\ref{as:suff_optcond}, and that
		\begin{equation}\label{eq:Gamma0limit}
			\lim_{\mu\to0} G^\mu_*X_*^{\mu}= \lim_{\mu\to0} (H^\mu_*X_*^{\mu} + Z^\mu_*) = \Gamma_*^0.
		\end{equation}
		\begin{assumption}
			\label{as.eps_H}
			The noisy evaluation of the Hessian, denoted by $\tilde H$, satisfies
			\begin{align}
				\| \tilde H(x) - H(x) \| \leq \eps_H,        \label{eq.hessian_err}
			\end{align}
			for all $x \in \mathcal{X}_l$ and some $\epsilon_H \in \RR_{>0}$ that satisfies
			\begin{equation}\label{eq:epsHstrict}
				\eps_H < \min \left \{ \tfrac{1}{4}\sigma_l^H, \tfrac{1}{4 \|x_*^{0}\|_{\infty} \|(\Gamma_*^0)^{-1}\|} \right\},
			\end{equation}
			where $\sigma_l^H$ is defined in Assumption~\ref{as:suff_optcond}.
		\end{assumption}

		\subsection{Local noisy quadratic-linear convergence rate}
		\label{sec:local_quadratic}
		The main result of this section, stated in Theorem~\ref{thm_main_neigh_1b}, is a generalization of Theorem~2.4 in \cite{kelley2022newton} for the barrier problem.  It is essentially
		a noisy version of a local quadratic convergence rate:
		For sufficiently small $\mu$, there exist 
		$M_1, M_2, M_3 \in \RR_{>0}$ (independent of $\mu$) so that
		\begin{equation}\label{eq:contract} e_k^{\mu,+}
			\leq M_{2} (e^{\mu}_k)^2
			+  M_{1} \eps_H e^\mu_k
			+ M_{0} \epsilon_g,
		\end{equation}
		where $e^\mu_k =  \| x_{k} - x_*^{\mu} \|_{(X_*^{\mu})^{-2}}$ and $e^{\mu,+}_k =  \| x_{k} + d_k - x_*^{\mu} \|_{(X_*^{\mu})^{-2}}$. 
		That is, when the noise level is small compared to the error in the iterates, the steps are almost locally quadratically convergent. However, when the iterates get closer to the true optimal solution, the steps are merely linearly convergent, and eventually the noise limits how accurately the iterates can approach the optimal solution.

		As in \cite{kelley2022newton}, we need to assume that the iterates are sufficiently close to $x^\mu_*$ for the contraction property \eqref{eq:contract} to hold.  Most of the results in this section are valid for iterates $\xk\in\Bbardelta$ where the radius $\bar\delta$ satisfies
		\begin{equation}\label{eq:bardelta0}
			\bar\delta \in\left(0,\; \xi_\delta\cdot\min \left\{\tfrac{\sigma_l^H}{2 L_H}, \,\tfrac{1}{2 \tilde L^0  \|(\Gamma_*^0)^{-1}\|},1 \right\}\right],
		\end{equation}
		with
		$\tilde L^0 = L_H \|x_*^{0} \|_{\infty}^2 +  \tfrac{2 - \bar \delta}{(1 - \bar \delta)^2} \|z^0_*\|_{\infty}$
		and a fixed parameter $\xi_\delta\in(0,1)$ arbitrarily close to one.
		Note that the constant $\tilde L^0$ on the right-hand side of \eqref{eq:bardelta0} also depends on $\bar\delta$.  Nevertheless, since $\tilde L^0$ converges to a positive number as $\bar\delta\to0$ due to Assumption~\ref{as:suff_optcond}, there always exists a value of $\bar\delta\in(0,1)$ that satisfies \eqref{eq:bardelta0}.

		We begin our analysis with some technical results.

		\begin{lemma}
			\label{lemma.DX_bnds}
			Let $\mu\in(0,\hat\mu]$ (with $\hat\mu$ from Theorem~\ref{thm:central_path}) and $\delta\in(0,1)$, and suppose $x\in\Bdelta$. Then
			\begin{equation}
				\label{eq.x_*_over_x_k}
				\tfrac{1}{1+\delta} \leq \tfrac{x_{*,i}^{\mu}}{ x_i} \leq \tfrac{1}{1-\delta}\ \text{ for all }i\in[n].
			\end{equation}

		\end{lemma}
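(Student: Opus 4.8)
The plan is to read off the per-coordinate bound directly from the definition of the scaled ball in \eqref{eq:def_neighborhood}. First I would note that for $\mu\in(0,\hat\mu]$ the central-path point satisfies $x_*^\mu\in\RR_{>0}^n$ (Theorem~\ref{theorem:ness_optcond} and Theorem~\ref{thm:central_path}), so $(X_*^\mu)^{-2}=\mathrm{diag}\big(1/(x_{*,1}^\mu)^2,\dots,1/(x_{*,n}^\mu)^2\big)$ is a well-defined positive-definite matrix and
\[
\|x-x_*^\mu\|_{(X_*^\mu)^{-2}}^2=\sum_{j=1}^n\frac{(x_j-x_{*,j}^\mu)^2}{(x_{*,j}^\mu)^2}.
\]
Membership $x\in\Bdelta$ means this quantity is at most $\delta^2$. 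Since every summand is nonnegative, keeping only the $i$th term gives $\big((x_i-x_{*,i}^\mu)/x_{*,i}^\mu\big)^2\le\delta^2$, i.e., $|x_i/x_{*,i}^\mu-1|\le\delta$ for each $i\in[n]$.

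Next I would rearrange this to $1-\delta\le x_i/x_{*,i}^\mu\le 1+\delta$. Here the hypothesis $\delta\in(0,1)$ is used: it makes the left-hand side strictly positive, so $x_i>0$ and all three expressions are positive, and inverting the chain (which reverses the inequalities) yields $\tfrac{1}{1+\delta}\le x_{*,i}^\mu/x_i\le\tfrac{1}{1-\delta}$, which is exactly \eqref{eq.x_*_over_x_k}.

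I do not anticipate a real obstacle: the result is just the statement that lying in the $(X_*^\mu)^{-2}$-weighted ball of radius $\delta$ forces every relative coordinate deviation to be at most $\delta$. The only points requiring a little care are (i) knowing $x_{*,i}^\mu>0$ so that the weighted norm is genuinely a norm and division by $x_{*,i}^\mu$ is legitimate, and (ii) using $\delta<1$ to establish $x_i>0$ before taking reciprocals, so that the directions of the inequalities are handled correctly.
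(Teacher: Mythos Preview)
Your proposal is correct and follows essentially the same route as the paper: both arguments expand the scaled norm, extract the per-coordinate bound $|x_i/x_{*,i}^\mu - 1|\le\delta$, and then rearrange to obtain \eqref{eq.x_*_over_x_k}. Your added remarks about $x_{*,i}^\mu>0$ and the role of $\delta<1$ in justifying the inversion are a nice bit of extra care, but the underlying argument is identical to the paper's.
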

		\begin{proof}
			By the definition of the $ \|\,\cdot\,\|_{(X_*^{\mu})^{-2}}$-norm
			\[
			\delta^2 \geq \|x - x_*^{\mu} \|_{(X_*^{\mu})^{-2}}^2 = \sum_{i=1}^n \tfrac{|x_{i} - x_{*,i}^{\mu} |^2}{\left(x_{*,i}^{\mu}\right)^2},
			\]
			which implies
			$\tfrac{|x_{i} - x_{*,i}^{\mu}|}{x_{*,i}^{\mu}} \leq\delta$ for all $i \in [n]$. 
			In addition, observe that for all $i \in [n]$
			$
			\tfrac{x_{i}}{x_{*,i}^{\mu}} = \tfrac{x_{i} - x_{*,i}^{\mu}}{x_{*,i}^{\mu}} + 1.
			$
			Combining the last two relationships yields \eqref{eq.x_*_over_x_k}.
		\end{proof}

		\begin{lemma}\label{lemma:CT_Kelley} \cite[Lemma 2.1 ]{kelley2022newton}
			If matrix $A$ is non-singular and 
			$\|A - B \| \leq \tfrac{1}{2 \|A^{-1}\|}$,
			then  $B$ is non-singular, $\|B^{-1}\| < 2 \|A^{-1}\| $, and 
			$ \| A^{-1} - B^{-1} \| \leq 2 \|A^{-1}\|^2 \|A - B\|$.
		\end{lemma}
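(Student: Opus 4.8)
The plan is to treat this as the classical Banach perturbation lemma and to give the standard Neumann-series argument. Set $E := A - B$. Since the norm here is the induced $2$-norm, submultiplicativity together with the hypothesis gives
\[
\|A^{-1}E\| \leq \|A^{-1}\|\,\|E\| \leq \|A^{-1}\|\cdot\tfrac{1}{2\|A^{-1}\|} = \tfrac{1}{2} < 1 .
\]
Factor $B = A - E = A\bigl(I - A^{-1}E\bigr)$. Because $\|A^{-1}E\| < 1$, the Neumann series $\sum_{j \geq 0} (A^{-1}E)^j$ converges (completeness of $\RR^{n\times n}$ under the induced norm is automatic in finite dimension) and equals $(I - A^{-1}E)^{-1}$, so $I - A^{-1}E$ is nonsingular with
\[
\bigl\|(I-A^{-1}E)^{-1}\bigr\| \leq \sum_{j \geq 0}\|A^{-1}E\|^j = \tfrac{1}{1-\|A^{-1}E\|} \leq 2 .
\]

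Next I would read off the conclusions about $B$. As a product of the nonsingular matrices $A$ and $I - A^{-1}E$, the matrix $B$ is nonsingular, and $B^{-1} = (I-A^{-1}E)^{-1}A^{-1}$; hence $\|B^{-1}\| \leq \bigl\|(I-A^{-1}E)^{-1}\bigr\|\,\|A^{-1}\| \leq 2\|A^{-1}\|$, which is the claimed bound on $\|B^{-1}\|$ (the strict inequality in the statement corresponds to using the strict form $\|A^{-1}E\| < \tfrac12$ of the estimate, so that $\tfrac{1}{1-\|A^{-1}E\|} < 2$; cf.\ \cite[Lemma~2.1]{kelley2022newton}). For the third inequality I would invoke the resolvent identity
\[
A^{-1} - B^{-1} = A^{-1}(B - A)B^{-1} = -\,A^{-1}E\,B^{-1},
\]
which is checked at once by left-multiplying by $A$ and right-multiplying by $B$. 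Combining it with $\|B^{-1}\| \leq 2\|A^{-1}\|$ and submultiplicativity gives
\[
\|A^{-1} - B^{-1}\| \leq \|A^{-1}\|\,\|E\|\,\|B^{-1}\| \leq 2\|A^{-1}\|^2\,\|A - B\| ,
\]
as desired.

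There is no genuine obstacle here: the statement is a routine perturbation estimate, which is exactly why the paper is content to cite it. The only points worth attention are (i) using the Neumann series, rather than a crude triangle-inequality bound, to obtain the sharp constant $2$; (ii) getting the factorization on the correct side — writing $B = A(I - A^{-1}E)$ rather than $(I - EA^{-1})A$ — so that the resulting norm bound is expressed in terms of $\|A^{-1}\|$ as stated; and (iii) the bookkeeping that propagates the $\tfrac12$ bound on $\|A^{-1}E\|$ through to the final constants.
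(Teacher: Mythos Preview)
The paper does not prove this lemma at all; it merely states it with a citation to \cite[Lemma~2.1]{kelley2022newton}. Your argument is the standard Banach perturbation/Neumann-series proof and is correct. One small note: your parenthetical about the strict inequality $\|B^{-1}\|<2\|A^{-1}\|$ is not quite right, since the hypothesis allows $\|A-B\|=\tfrac{1}{2\|A^{-1}\|}$ (take $A=I$, $B=\tfrac12 I$ in one dimension to see that equality $\|B^{-1}\|=2\|A^{-1}\|$ can occur), so only the non-strict bound is actually provable from the stated hypothesis; this is a minor inconsistency in the lemma's statement, not a flaw in your proof, and the non-strict form is all that is ever used downstream.
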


		\begin{lemma} \label{lemma.H_II_bnd}
			There exists $\mu_1 \in (0, \hat\mu)$ such that for all $\mu \in [0, \mu_1]$ and for all $\bar\delta$ satisfying \eqref{eq:bardelta0}, one has that
			\begin{align}
				\eps_H &< \min \left \{\tfrac{1}{4}\sigma_l^H, \tfrac{1}{4 \|x_*^{\mu}\|_{\infty} \|(G_*^\mu X_*^{\mu})^{-1}\|} \right\}, \label{eq:epsH} \ \ \text{and} \\
				\bar\delta &\leq \min \left\{\tfrac{\sigma_l^H}{2 L_H }, \tfrac{1}{2 \tilde L^\mu  \|(G_*^\mu X_*^\mu)^{-1}\|},1 \right\}, \label{eq:bardelta}
			\end{align}
			where
			$\tilde L^\mu = L_H \|x_*^{\mu} \|_{\infty}^2 +  \tfrac{2 - \bar \delta}{(1 - \bar \delta)^2} \|z^\mu_*\|_{\infty},$
			and that, whenever $\xk\in \Bbardelta$,
			\begin{align}
				\sigma_{\min} \left( H^\mu_{k, \III\III} \right) &\geq \tfrac{1}{4}\sigma_l^H > 0, \label{eq:lamHmin}  \ \text{and} \\
				\sigma_{\min} \left( \tilde H^\mu_{k, \III\III}\right) &\geq \tfrac{1}{4}\sigma_l^H -\epsilon_H > 0,  \label{eq:tildelamHmin}  
			\end{align}
		\end{lemma}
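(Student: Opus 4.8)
\emph{Proof sketch (proposal).} The statement splits into a ``constant comparison'' part (choosing $\mu_1$ so that \eqref{eq:epsH} and \eqref{eq:bardelta} hold for \emph{every} $\bar\delta$ admissible in \eqref{eq:bardelta0}) and an ``eigenvalue'' part (establishing \eqref{eq:lamHmin}--\eqref{eq:tildelamHmin} for iterates in $\Bbardelta$). The ingredients I would use are: continuity of the central path and boundedness of $x_*^\mu$, $z_*^\mu$ on the compact interval $[0,\hat\mu]$ (Theorem~\ref{thm:central_path}); the limit \eqref{eq:Gamma0limit} together with Lemma~\ref{lemma:CT_Kelley}, which for small $\mu$ yields nonsingularity of $G_*^\mu X_*^\mu$ (hence of $G_*^\mu$) and the convergence $\|(G_*^\mu X_*^\mu)^{-1}\|\to\|(\Gamma_*^0)^{-1}\|$ as $\mu\to0$; continuity of $\nabla^2 f$ and its Lipschitz property on $\XXX_l$ (Assumptions~\ref{as:suff_optcond} and \ref{assumption.bnd_const_hess_lipschitz_1}); Lemma~\ref{lemma.DX_bnds} to pass between the scaled ball and Euclidean distances; and the elementary bound $|\sigma_{\min}(A)-\sigma_{\min}(B)|\le\|A-B\|$ for symmetric $A,B$.

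For the constant-comparison part, \eqref{eq:epsH} is the easy one: its right-hand side is continuous in $\mu$ and, since $\|x_*^\mu\|_\infty\to\|x_*^0\|_\infty$ and $\|(G_*^\mu X_*^\mu)^{-1}\|\to\|(\Gamma_*^0)^{-1}\|$, converges as $\mu\to0$ to the right-hand side of \eqref{eq:epsHstrict}, which strictly exceeds $\eps_H$; hence \eqref{eq:epsH} holds for all sufficiently small $\mu$, with no dependence on $\bar\delta$. Inequality \eqref{eq:bardelta} needs more care since $\tilde L^\mu$ and $\tilde L^0$ depend on $\bar\delta$ through the factor $\tfrac{2-\bar\delta}{(1-\bar\delta)^2}$, and the chosen $\mu_1$ must work for all admissible $\bar\delta$. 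The decisive observation is that the ``$1$'' inside the $\min$ of \eqref{eq:bardelta0} forces $\bar\delta\in(0,\xi_\delta]$ with $\xi_\delta<1$, so $\tfrac{2-\bar\delta}{(1-\bar\delta)^2}\le\tfrac{2-\xi_\delta}{(1-\xi_\delta)^2}$ is bounded independently of $\bar\delta$; combined with the uniform bounds on $\|x_*^\mu\|_\infty$, $\|z_*^\mu\|_\infty$ this makes $\tilde L^\mu$ uniformly bounded and, since the $\bar\delta$-dependent factor is identical in $\tilde L^\mu$ and $\tilde L^0$, gives $\tilde L^\mu\|(G_*^\mu X_*^\mu)^{-1}\|\to\tilde L^0\|(\Gamma_*^0)^{-1}\|$ \emph{uniformly in} $\bar\delta$ as $\mu\to0$, with the limit bounded away from $0$ uniformly in $\bar\delta$. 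Because \eqref{eq:bardelta0} carries the slack factor $\xi_\delta<1$, this convergence can be absorbed: for $\mu$ below a threshold independent of $\bar\delta$ one obtains $\tilde L^\mu\|(G_*^\mu X_*^\mu)^{-1}\|\le\tfrac{1}{\xi_\delta}\tilde L^0\|(\Gamma_*^0)^{-1}\|$, and then, since the first and third entries of the $\min$ in \eqref{eq:bardelta} coincide with those in \eqref{eq:bardelta0} and $\bar\delta\le\xi_\delta\,\tfrac{\sigma_l^H}{2L_H}\le\tfrac{\sigma_l^H}{2L_H}$ and $\bar\delta\le\xi_\delta\le1$, inequality \eqref{eq:bardelta} follows. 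Set $\mu_1$ to the minimum of all thresholds obtained here, also ensuring $G_*^\mu$ is nonsingular on $[0,\mu_1]$.

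For the eigenvalue part, fix an admissible $\bar\delta$ and $\xk\in\Bbardelta$; by convexity of the scaled ball and $\bar\delta\le1$ the whole segment $[x_*^\mu,\xk]$ lies in $\bar B_1(x_*^\mu)\subseteq\XXX_l$, so $H$ is $L_H$-Lipschitz along it and $\xk\in\XXX_l$. First, by continuity of $\nabla^2 f$ and $x_*^\mu\to x_*^0$ we have $\sigma_{\min}(H^\mu_{*,\III\III})\to\sigma_{\min}(H^0_{*,\III\III})=\sigma_l^H$, so after possibly shrinking $\mu_1$ we may assume $\sigma_{\min}(H^\mu_{*,\III\III})\ge\tfrac12\sigma_l^H$ for $\mu\in[0,\mu_1]$. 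Then the perturbation bound gives $\sigma_{\min}(H^\mu_{k,\III\III})\ge\sigma_{\min}(H^\mu_{*,\III\III})-\|H^\mu_{k,\III\III}-H^\mu_{*,\III\III}\|\ge\tfrac12\sigma_l^H-L_H\|\xk-x_*^\mu\|$, and $\|\xk-x_*^\mu\|$ is controlled by translating $\xk\in\Bbardelta$ into a Euclidean estimate via Lemma~\ref{lemma.DX_bnds} and using the bound $\bar\delta\le\tfrac{\sigma_l^H}{2L_H}$ from \eqref{eq:bardelta}; this yields $L_H\|\xk-x_*^\mu\|\le\tfrac14\sigma_l^H$, hence \eqref{eq:lamHmin}. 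Finally, \eqref{eq:tildelamHmin} is immediate: $\xk\in\XXX_l$ gives $\|\tilde H^\mu_k-H^\mu_k\|\le\eps_H$ by Assumption~\ref{as.eps_H}, so $\sigma_{\min}(\tilde H^\mu_{k,\III\III})\ge\sigma_{\min}(H^\mu_{k,\III\III})-\eps_H\ge\tfrac14\sigma_l^H-\eps_H>0$, positivity because $\eps_H<\tfrac14\sigma_l^H$ (Assumption~\ref{as.eps_H}).

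\textbf{Main obstacle.} The delicate point is the \emph{double} uniformity in the constant-comparison step: a single $\mu_1$ must serve every $\bar\delta$ admissible in \eqref{eq:bardelta0}, even though \eqref{eq:bardelta} contains $\bar\delta$-dependent quantities through $\tilde L^\mu$. The cap $\bar\delta\le\xi_\delta<1$ and the slack factor $\xi_\delta$ in \eqref{eq:bardelta0} are exactly what make this work, and getting the uniform estimates right -- uniform boundedness of $\tilde L^\mu$, uniform convergence of $\tilde L^\mu\|(G_*^\mu X_*^\mu)^{-1}\|$, and a uniform positive lower bound on its limit -- is where essentially all the work lies. The remaining steps (central-path limits, Lemma~\ref{lemma:CT_Kelley}, the scaled-vs-Euclidean comparison from Lemma~\ref{lemma.DX_bnds}, and the eigenvalue perturbation bound) are routine, with only minor bookkeeping of the $\|x_*^\mu\|_\infty$-type constants that appear when converting the scaled-ball radius into Euclidean distance.
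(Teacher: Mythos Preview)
Your proposal is correct and follows essentially the same route as the paper: a continuity argument on the central path for \eqref{eq:epsH}--\eqref{eq:bardelta}, followed by Lipschitz-plus-perturbation estimates for \eqref{eq:lamHmin}--\eqref{eq:tildelamHmin}. Two minor remarks: the paper uses Lemma~\ref{lemma:CT_Kelley} twice (on $H^\mu_{*,\III\III}$ vs.\ $H^0_{*,\III\III}$, then on $H^\mu_{k,\III\III}$ vs.\ $H^\mu_{*,\III\III}$) in place of your direct bound $|\sigma_{\min}(A)-\sigma_{\min}(B)|\le\|A-B\|$, which is equivalent here; and the paper is terser than you on the uniformity-in-$\bar\delta$ issue for \eqref{eq:bardelta}, simply appealing to continuity, whereas you spell out how the cap $\bar\delta\le\xi_\delta<1$ and the slack factor $\xi_\delta$ make the threshold on $\mu$ independent of $\bar\delta$---your treatment of this point is in fact more careful than the paper's.
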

		\begin{proof}
			By Theorem~\ref{thm:central_path}, $x^\mu_*$ and $z_*^\mu$ are continuous functions of $\mu$.  As a consequence, all quantities on the right-hand sides of
			\eqref{eq:epsH} and \eqref{eq:bardelta} depend continuously on $\mu$ and converge to the corresponding values in \eqref{eq:epsHstrict} and \eqref{eq:bardelta0} as $\mu\to0$. Therefore there exists $\mu_1>0$ so that for all $\mu\in[0,\mu_1]$ we have that \eqref{eq:epsH} and \eqref{eq:bardelta} hold.  After possibly reducing $\mu_1$, we further have that
			\begin{equation}
				\label{eq.t2}
				\|x_*^{\mu} - x_*^0 \| \leq \tfrac{\sigma_l^H}{2 L_H }, \ \ \text{for all}  \ \ \mu\in[0,\mu_1].
			\end{equation}
			By Assumption~\ref{assumption.bnd_const_hess_lipschitz_1}, one has
			$
			\|H_{*, \III\III}^{\mu} -  H_{*, \III\III}^0 \| \leq L_H \|x_*^{\mu} - x_*^0 \|, 
			$
			which combined with \eqref{eq.t2} yields
			$\|H_{*, \III\III}^{\mu} -  H_{*, \III\III}^0 \| \leq \tfrac{\sigma_l^H}{2}$.
			Thus, by Lemma~\ref{lemma:CT_Kelley}
			\begin{align}
				\label{eq.t3}
				\|(H_{*, \III\III}^{\mu})^{-1}\| \leq 2 \|(H_{*, \III\III}^{0})^{-1}\| \leq \tfrac{2}{\sigma_l^H}.
			\end{align}
			For $\xk\in\Bbardelta$, by Assumption~\ref{assumption.bnd_const_hess_lipschitz_1} and \eqref{eq:bardelta} we have 
			\begin{align*}
				\|H_{*, \III\III}^{\mu} -  H_{k, \III\III}^\mu \| & \leq  L_H \|x_*^{\mu} - \xk\| 
				= L_H e^\mu_k\leq  L_H\bar\delta \leq \tfrac{\sigma_l^H}{2}.
			\end{align*}
			Hence, applying 
			Lemma~\ref{lemma:CT_Kelley} again, along with \eqref{eq.t3}, yields
			$$ \|(H_{k, \III\III}^\mu)^{-1}\| \leq 2 \|(H_{*, \III\III}^{\mu})^{-1}\| \leq \tfrac{4}{\sigma_l^H},$$
			or equivalently \eqref{eq:lamHmin} holds.
			Finally
			\begin{align*}
				\sigma_{\min}\left(\tilde H_{k, \III\III}^\mu\right) &= \sigma_{\min}\left(H_{k, \III\III}^\mu - (H_{k, \III\III}^\mu-\tilde H_{k, \III\III}^\mu)\right)\\&\geq \sigma_{\min}\left(H_{k, \III\III}^\mu\right) - \sigma_{\max}\left(H_{k, \III\III}^\mu-\tilde H_{k, \III\III}^\mu\right) \\
				&= \sigma_{\min}\left(H_{k, \III\III}^\mu\right) - \|H_{k, \III\III}^\mu-\tilde H_{k, \III\III}^\mu\|,
			\end{align*}
			which implies \eqref{eq:tildelamHmin} due to \eqref{eq.hessian_err} and \eqref{eq:epsH}.
		\end{proof}

		The following lemma shows that the search direction can be computed in \eqref{eq.noisy_d_N}.  
		\begin{lemma}\label{lemma.G_unif_posdef_iter}
			Suppose $\bar\delta\in(0,1)$ satisfies \eqref{eq:bardelta0}.   Then, there exist $\mu_2\in(0,\mu_1]$, and $\sigma^G_l, \tilde\sigma^G_l \in \RR_{> 0}$ so that for all $\mu \in [0, \mu_2]$ one has that
			\[
			\sigma_{\min}\left(G_k^\mu\right)\geq\sigma_l^G, \,\text{ and } \,  \sigma_{\min}(\tilde G_k^\mu)\geq\tilde\sigma_l^G,
			\]
			whenever $\xk\in\Bbardelta$.  In particular, $G_k^\mu$ and $\tilde G_k^\mu$ are nonsingular if $\xk\in\Bbardelta$.
		\end{lemma}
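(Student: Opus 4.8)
The plan is to establish the lower bound on $\sigma_{\min}(G_k^\mu)$ (and $\sigma_{\min}(\tilde G_k^\mu)$) by partitioning these symmetric matrices along the inactive/active index sets $\III$ and $\AAA$ of Assumption~\ref{as:suff_optcond} and exploiting that the barrier term $\mu X^{-2}$ in \eqref{eq.G} adds a huge positive-definite block on the active coordinates, while Lemma~\ref{lemma.H_II_bnd} already makes the Hessian blocks on the inactive coordinates positive definite. (For $\mu=0$ the statement is vacuous, since the iterates of Algorithm~\ref{alg.bnd_const_fixed_mu} are strictly positive whereas $x_{*,i}^0=0$ for $i\in\AAA$, so $\|\xk-x_*^0\|_{(X_*^0)^{-2}}=\infty$ and $\xk\notin\Bbardelta$; hence I only need to treat $\mu\in(0,\mu_2]$, for which $x_*^\mu>0$.) Equivalently, this amounts to the well-conditioning of the scaled matrix $G_k^\mu X_*^\mu$, which lies near the nonsingular matrix $\Gamma_*^0$ by \eqref{eq:Gamma0limit}; this is the reason the constants below can be taken independent of $\mu$.

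Concretely, for $\bar\delta$ satisfying \eqref{eq:bardelta0} and $\xk\in\Bbardelta$ I would write $G_k^\mu$ in $2\times2$ block form with respect to $(\AAA,\III)$. The $\III\III$-block is $H_{k,\III\III}^\mu+\mu(X_{k,\III}^\mu)^{-2}\succeq H_{k,\III\III}^\mu\succeq\tfrac14\sigma_l^H\,I$ by \eqref{eq:lamHmin} of Lemma~\ref{lemma.H_II_bnd}; the off-diagonal block $H_{k,\AAA\III}^\mu$ and the term $H_{k,\AAA\AAA}^\mu$ are uniformly bounded because $\|H_k^\mu\|\le\|H_*^\mu\|+L_H\|\xk-x_*^\mu\|$ is bounded by Assumption~\ref{assumption.bnd_const_hess_lipschitz_1} (using $\xk\in\Bbardelta\subseteq\XXX_l$ and that $\|x_*^\mu\|$ stays bounded as $\mu\to0$); and the $\AAA\AAA$-block is $H_{k,\AAA\AAA}^\mu+\mu(X_{k,\AAA}^\mu)^{-2}$ with $\sigma_{\min}\!\bigl(\mu(X_{k,\AAA}^\mu)^{-2}\bigr)\to\infty$ as $\mu\to0$, uniformly over $\xk\in\Bbardelta$. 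This last point is the key estimate: by strict complementarity (Assumption~\ref{as:suff_optcond}) and continuity of the central path (Theorem~\ref{thm:central_path}), $z_{*,i}^\mu\to z_{*,i}^0>0$ for each $i\in\AAA$, so $x_{*,i}^\mu=\mu/z_{*,i}^\mu=\Theta(\mu)$, and Lemma~\ref{lemma.DX_bnds} gives $x_{k,i}^\mu\le(1+\bar\delta)x_{*,i}^\mu$, whence $\mu/(x_{k,i}^\mu)^2\ge c/\mu$ for a constant $c>0$. Completing the square in the quadratic form $u^T B u+2u^T C v+v^T D v$ of the block matrix (with $B$ the active block, $D$ the inactive block, $C$ the off-diagonal block) then shows: once $\mu$ is small enough that $\sigma_{\min}(B)$ exceeds a threshold determined only by $\|C\|$ and $\sigma_l^H$, one has $\sigma_{\min}(G_k^\mu)\ge\tfrac18\sigma_l^H=:\sigma_l^G>0$. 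This yields $\mu_2'\in(0,\mu_1]$ with the bound holding for all $\mu\in(0,\mu_2']$ and $\xk\in\Bbardelta$.

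For $\tilde G_k^\mu=\tilde H_k^\mu+\mu(X_k^\mu)^{-2}$ the argument is identical after replacing the inactive-block bound by $\tilde H_{k,\III\III}^\mu\succeq\bigl(\tfrac14\sigma_l^H-\eps_H\bigr)I$ from \eqref{eq:tildelamHmin} of Lemma~\ref{lemma.H_II_bnd}, which is positive by Assumption~\ref{as.eps_H}; the active and off-diagonal blocks change by at most $\eps_H$ in norm, so the $\AAA\AAA$-block still has $\sigma_{\min}\to\infty$, and the same estimate gives $\mu_2''\in(0,\mu_1]$ with $\sigma_{\min}(\tilde G_k^\mu)\ge\tfrac12\bigl(\tfrac14\sigma_l^H-\eps_H\bigr)=:\tilde\sigma_l^G>0$ for $\mu\in(0,\mu_2'']$ and $\xk\in\Bbardelta$. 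Setting $\mu_2:=\min\{\mu_2',\mu_2''\}$ completes the proof; nonsingularity of $G_k^\mu$ and $\tilde G_k^\mu$ is then immediate since positive definiteness implies invertibility. The one spot that needs genuine care — the main obstacle — is the $\mu$-uniform blow-up of the active barrier block, which hinges precisely on the $\Theta(\mu)$ rate at which $x_{*,\AAA}^\mu$ vanishes along the central path; everything else is routine.
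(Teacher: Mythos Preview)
Your proposal is correct and follows essentially the same approach as the paper: both partition along $(\AAA,\III)$, invoke Lemma~\ref{lemma.H_II_bnd} for the inactive block, use the uniform Hessian bound on the off-diagonal, and exploit the blow-up of the active barrier block via strict complementarity and Lemma~\ref{lemma.DX_bnds} to get a $\mu$-independent lower bound. The only difference is cosmetic: the paper bounds the block quadratic form by a case split on whether $\|v_\AAA\|$ is small or large for a unit vector $v=(v_\AAA,v_\III)$, whereas you complete the square (a Schur-complement argument), and these two devices are interchangeable here.
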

		\begin{proof}
			We first prove the existence of $\sigma^G_l$.
			Let $\mu_1$ be the threshold from Lemma~\ref{lemma.H_II_bnd} and let $\lambda_H=\tfrac{\sigma_l^H}{8}$, and  $M_H=\max_{\mu\in[0,\mu_1], x \in  \Bbardelta  }\|H(x)\|$.
			Define the  quadratic $\phi(t)=-\left(\lam_H+M_H\right)t^2 - 2M_H\,t + \lam_H$ and let $M_\AAA\in(0,1)$  be its positive root. 
			%
			
			Now suppose $\xk\in\Bbardelta$ and choose any $v\in\RR^
			n$ with $\|v\|=1$.  We need to show that there exists $\sigma^G_l \in \RR_{>0}$ such that $v^TG_k^\mu v\geq \sigma^G_l$ for all sufficiently small $\mu$ independent of the particular choices of $v$ and $\xk$.  
			We consider the following two cases where we set $v=(v_\AAA,v_\III)$ with the $v_\AAA$ and $v_\III$
			correspond to the index sets $\AAA$ and $\III$.
			
			Case (i):  Suppose $\|v_\AAA\|\leq M_\AAA$.  Then 
			$-\left(\lam_H+M_H\right)\|v_\AAA\|^2 - 2M_H\|v_\AAA\| + \lam_H \geq 0$,
			because $M_\AAA$ is the positive root of $\phi(t)$, $\|v_\AAA\|$ is between the roots, and $\phi$ is concave. 
			By $1=\|v\|^2 = \|v_\III\|^2 + \|v_\AAA\|^2$, one concludes that $\|v_\III\|\leq1$. Substituting these relationships into the previous inequality yields, for any $\mu\in[0,\mu_1]$ that $0\leq\lambda_H\|v_\III\|^2 - M_H\|v_\AAA\|^2 - 2M_H\|v_\III\|\|v_\AAA\|$.
			Using \eqref{eq:lamHmin} and the definition of $\lam_H$ we have
			$
			\lam_H \|v_\III\|^2 = \tfrac14 \sigma_l^H\|v_\III\|^2 - \tfrac18 \sigma_l^H\|v_\III\|^2 \leq v_\III^T H_{k,\III\III}^{\mu}v_\III - \tfrac18 \sigma_l^H\|v_\III\|^2. 
			$
			Combining the last two inequalities yields
			\begin{equation}
				0 \leq -\tfrac18\sigma_l^H\|v_\III\|^2 + v_\III^T H_{k,\III\III}^{\mu}v_\III + v_\AAA^T H_{k,\AAA\AAA}^{\mu}v_\AAA  + 2 v_\III^T H_{k,\III\AAA}^{\mu}v_\AAA, 
				\label{eq:lamHvI_2}
			\end{equation}
			where we used the definition of $M_H$ and the fact that $\|H_{k,\AAA\AAA}^{\mu}\|\leq \|H_k^{\mu}\|$ and $\|H_{k,\III\AAA}^{\mu}\|\leq \|H_k^{\mu}\|$.
			Combining \eqref{eq.G} with $z^\mu_k=\mu (X_k^\mu)^{-1}\mathbf{e}$ we obtain
			\begin{align}
				\label{eq:vGmuv_2}
				v^TG_k^\mu v  &  = v_\III^T H_{k,\III\III}^{\mu}v_\III + v_\AAA^T H_{k,\AAA\AAA}^{\mu}v_\AAA  + 2 v_\III^T H_{k,\III\AAA}^{\mu}v_\AAA \\ & \qquad+ v_\III^T Z_{k,\III}^\mu(X_{k,\III}^\mu)^{-1} v_\III  + v_\AAA^T Z_{k,\AAA}^\mu(X_{k,\AAA}^\mu)^{-1} v_\AAA\nonumber.
			\end{align}
			Because the last two terms are non-negative, this together with \eqref{eq:lamHvI_2} leaves us with
			$
			v^TG_k^\mu v \geq \tfrac18\sigma_l^H\|v_\III\|^2.
			$
			Since $\|v_\III\|^2 = 1 - \|v_\AAA\|^2$, $\|v_\AAA\|\leq M_\AAA$, and $M_\AAA \in (0, 1)$, it follows that $v^TG_k^\mu v \geq \sigma^G_l$ with $\sigma^G_l=\tfrac18\sigma_l^H(1-M_\AAA^2)>0$ for all $\mu\in(0,\mu_1]$.

			Case (ii):  Suppose that $\|v_\AAA\|>M_\AAA$.  
			Since $\|v\|=1$ and $H^\mu_k$ is uniformly bounded, the first three terms on the right-hand side of \eqref{eq:vGmuv_2} are uniformly bounded below by a constant $M\in\RR$ for all $\mu\in(0,\mu_1]$.  Since the fourth term is non-negative, we obtain
			\begin{equation}\label{eq:quark}
				v^TG_k^\mu v \geq M + v_\AAA^T Z_{k,\AAA}^\mu(X_{k,\AAA}^\mu)^{-1} v_\AAA
				\geq M +
				\min_{i\in\AAA}\tfrac{z^\mu_{k, i}}{x^\mu_{k, i}}\cdot M_\AAA^2 
			\end{equation}
			for all $\mu\in(0,\mu_1]$.
			By $z^\mu_{k,i}=\mu/x^\mu_{k,i}$, Lemma~\ref{lemma.DX_bnds}, and $\bar\delta \in (0, 1)$ we have for $i\in[n]$
			\[
			\tfrac{z^\mu_{k, i}}{x^\mu_{k, i}} = \tfrac{z^\mu_{*,i}}{x^\mu_{*,i}} \left(\tfrac{x_{*, i}^{\mu}}{x^\mu_{k, i}}\right)^2 \geq \tfrac{z^\mu_{*,i}}{x^\mu_{*,i}} \left(\tfrac{1}{1 + \bar \delta}\right)^2  > \tfrac{1}{4} \tfrac{z^\mu_{*,i}}{x^\mu_{*,i}}. \]
			By strict complementarity (Assumption~\ref{as:suff_optcond}) one concludes $\tfrac{z^\mu_{*,i}}{x^\mu_{*,i}}\to\infty$ and therefore $\tfrac{z^\mu_{k,i}}{x^\mu_{k,i}}\to\infty$ for all $i\in\AAA$ as $\mu\to0$.  Then \eqref{eq:quark} implies that there exists $\mu_2\in(0,\mu_1]$ such that $v^TG_k^\mu v \geq\sigma^G_l$ for all $\mu\in(0,\mu_2]$.
			
			The existence of $\tilde\sigma^G_l$ follows from the same arguments, after replacing $H^\mu_k$ by $\tilde H^\mu_k$ and $\lambda_H=\tfrac{\sigma_l^H}{8}$ by $\lambda_H=\tfrac12(\tfrac14\sigma_l^H-\eps_H)$. Observe that  $\lam_H$ is still positive, due to \eqref{eq:tildelamHmin}.
		\end{proof}

		We now present an iteration-dependent version of the first main result \eqref{eq:contract}.  We will make repeated use of the relationship $\|(X_*^\mu)^{-1}V\| = \sigma_{\max}((X_*^\mu)^{-1}V)=\|(X_*^\mu)^{-1}v\|_\infty \leq \|(X_*^\mu)^{-1}v\|=\|v\|_{(X_*^\mu)^{-2}}$ for $v\in\RR_{\geq 0 }^n$.
		
		\begin{lemma}
			\label{lemma.bound_const_prelim_x_neigh}
			Let $\mu\in(0,\mu_2]$ where $\mu_2$ is the threshold from Lemma~\ref{lemma.G_unif_posdef_iter}, and suppose $\bar\delta\in(0,1)$ satisfies \eqref{eq:bardelta0}.
			Then, for $\xk\in\Bbardelta$
			\begin{align}
				\label{eq.neigh_1st_bnd}
				\begin{split}
					e_k^{\mu,+}
					& \leq M_k^{H, \mu} \left\|(G_k^{\mu} X_*^{\mu})^{-1}\right\|  (e^\mu_k)^2\\
					&\qquad +   M_k^{g, \mu} \| (G_k^{\mu}X_*^{\mu})^{-1} - (\tilde G^\mu_k X_*^{\mu})^{-1}\| e^\mu_k
					+ \|(\tilde G_k^{\mu} X_*^{\mu})^{-1} \| \epsilon_g 
				\end{split}
			\end{align}
			with
			\begin{align*}
				M_k^{H, \mu} = \tfrac{L_H \|x_*^{\mu}\|_\infty^2}{2}  + \mu  \|X_*^{\mu} (X^\mu_k)^{-2}\| \ \ \text{ and } \ \ 
				M_{k}^{g,\mu} = L_g \|x_*^{\mu}\|_\infty +  \mu\| (X^\mu_k)^{-1}\|.
			\end{align*}
			
		\end{lemma}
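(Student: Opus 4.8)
The plan is to view $x_k+d_k-x^\mu_*$ as the residual of a noisy Newton iteration for the nonlinear system $\nabla\varphi^\mu(x)=0$, whose root along the central path is $x^\mu_*$ (since $(x^\mu_*,z^\mu_*)$ satisfies \eqref{eq.kktmu} by Theorem~\ref{thm:central_path}, so that $\nabla\varphi^\mu(x^\mu_*)=g(x^\mu_*)-\mu(X^\mu_*)^{-1}\mathbf{e}=0$), and to carry every estimate in the scaled norm $\|\cdot\|_{(X^\mu_*)^{-2}}$, equivalently in terms of the scaled matrices $G^\mu_kX^\mu_*$ and $\tilde G^\mu_kX^\mu_*$. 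Since $\mu\le\mu_2$ and $\xk\in\Bbardelta$, Lemma~\ref{lemma.G_unif_posdef_iter} gives that $G^\mu_k$ and $\tilde G^\mu_k$ are nonsingular and that \eqref{eq:tildesigma_l_G} holds, so $d_k=-(\tilde G^\mu_k)^{-1}\nabla\tilde\varphi^\mu_k$ by \eqref{eq.noisy_d_N}, with $\nabla\tilde\varphi^\mu_k=\nabla\varphi^\mu(\xk)+\vareps_g(\xk)$ by Assumption~\ref{assumption.bnd_err}. First I would split
\[
x_k+d_k-x^\mu_* = \big(\xk-x^\mu_*-(G^\mu_k)^{-1}\nabla\varphi^\mu(\xk)\big) + \big((G^\mu_k)^{-1}-(\tilde G^\mu_k)^{-1}\big)\nabla\varphi^\mu(\xk) - (\tilde G^\mu_k)^{-1}\vareps_g(\xk),
\]
premultiply by $(X^\mu_*)^{-1}$, use $(X^\mu_*)^{-1}(G^\mu_k)^{-1}=(G^\mu_kX^\mu_*)^{-1}$ and $(X^\mu_*)^{-1}(\tilde G^\mu_k)^{-1}=(\tilde G^\mu_kX^\mu_*)^{-1}$, and apply the triangle inequality, so that $e_k^{\mu,+}$ is bounded by the sum of the Euclidean norms of the three resulting vectors. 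Throughout I would use $\|\xk-x^\mu_*\|\le\|X^\mu_*\|\,e^\mu_k=\|x^\mu_*\|_\infty e^\mu_k$, and the fact that the segment $[x^\mu_*,\xk]$ lies in $\bar B_1(x^\mu_*)\subseteq\mathcal X_l$ because $\bar\delta\le1$ by \eqref{eq:bardelta0}, which makes the Lipschitz assumptions on $g$ and $H$ applicable along that segment.

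For the exact-Newton residual $r_k:=\xk-x^\mu_*-(G^\mu_k)^{-1}\nabla\varphi^\mu(\xk)$, I would use $\nabla\varphi^\mu(x^\mu_*)=0$ (equivalently $g(x^\mu_*)=\mu(X^\mu_*)^{-1}\mathbf{e}$) to obtain
\[
G^\mu_k r_k = \Big(H^\mu_k(\xk-x^\mu_*)-\big(g(\xk)-g(x^\mu_*)\big)\Big) + \mu\Big((X^\mu_k)^{-2}(\xk-x^\mu_*)+(X^\mu_k)^{-1}\mathbf{e}-(X^\mu_*)^{-1}\mathbf{e}\Big).
\]
The first parenthesis is the usual second-order Taylor remainder of $f$ along $[x^\mu_*,\xk]$, so by Lipschitz continuity of $H$ (Assumption~\ref{assumption.bnd_const_hess_lipschitz_1}) its norm is at most $\tfrac{L_H}{2}\|\xk-x^\mu_*\|^2\le\tfrac{L_H}{2}\|x^\mu_*\|_\infty^2(e^\mu_k)^2$. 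For the second parenthesis, the componentwise identity $\tfrac{b-a}{b^2}+\tfrac1b-\tfrac1a=-\tfrac{(b-a)^2}{ab^2}$ for positive $a,b$ shows it equals $-(X^\mu_*)^{-1}(X^\mu_k)^{-2}w$, where $w$ has entries $(x^\mu_{k,i}-x^\mu_{*,i})^2$; writing $w=(X^\mu_*)^2\hat w$ with $\hat w$ the vector of squared components of $(X^\mu_*)^{-1}(\xk-x^\mu_*)$, this becomes $-X^\mu_*(X^\mu_k)^{-2}\hat w$, of norm at most $\|X^\mu_*(X^\mu_k)^{-2}\|\,\|\hat w\|\le\|X^\mu_*(X^\mu_k)^{-2}\|\,\|\hat w\|_1=\|X^\mu_*(X^\mu_k)^{-2}\|(e^\mu_k)^2$. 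Since $(X^\mu_*)^{-1}r_k=(G^\mu_kX^\mu_*)^{-1}(G^\mu_k r_k)$, combining the two bounds gives the first term $M_k^{H,\mu}\|(G^\mu_kX^\mu_*)^{-1}\|(e^\mu_k)^2$.

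For the remaining two pieces, I would bound $\|\nabla\varphi^\mu(\xk)\|$ via $\nabla\varphi^\mu(\xk)=\big(g(\xk)-g(x^\mu_*)\big)+\mu\big((X^\mu_*)^{-1}-(X^\mu_k)^{-1}\big)\mathbf{e}$: Lipschitz continuity of $g$ (Assumption~\ref{assumption.bnd_const_grad_lipschitz}) gives $\|g(\xk)-g(x^\mu_*)\|\le L_g\|x^\mu_*\|_\infty e^\mu_k$, and the identity $\mu\big((X^\mu_*)^{-1}-(X^\mu_k)^{-1}\big)\mathbf{e}=\mu(X^\mu_k)^{-1}\big((X^\mu_*)^{-1}(\xk-x^\mu_*)\big)$ gives the bound $\mu\|(X^\mu_k)^{-1}\|e^\mu_k$, hence $\|\nabla\varphi^\mu(\xk)\|\le M_k^{g,\mu}e^\mu_k$. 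Using $(X^\mu_*)^{-1}\big((G^\mu_k)^{-1}-(\tilde G^\mu_k)^{-1}\big)=(G^\mu_kX^\mu_*)^{-1}-(\tilde G^\mu_kX^\mu_*)^{-1}$, the scaled Hessian-error term is then bounded by $\|(G^\mu_kX^\mu_*)^{-1}-(\tilde G^\mu_kX^\mu_*)^{-1}\|M_k^{g,\mu}e^\mu_k$, and the scaled gradient-noise term by $\|(\tilde G^\mu_kX^\mu_*)^{-1}\|\,\|\vareps_g(\xk)\|\le\|(\tilde G^\mu_kX^\mu_*)^{-1}\|\eps_g$ by Assumption~\ref{assumption.bnd_err}. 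Summing the three estimates yields \eqref{eq.neigh_1st_bnd}.

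The underlying calculus is routine; the part that needs care is the bookkeeping that keeps every quantity expressed in the $(X^\mu_*)^{-2}$-scaled norm and in the scaled matrices $G^\mu_kX^\mu_*$, $\tilde G^\mu_kX^\mu_*$, so that the $\mu$-dependence is isolated precisely in the factors $\mu\|X^\mu_*(X^\mu_k)^{-2}\|$, $\mu\|(X^\mu_k)^{-1}\|$, and $\|(G^\mu_kX^\mu_*)^{-1}\|$ --- which can subsequently be bounded uniformly in $\mu$ via Lemma~\ref{lemma.DX_bnds}. In particular, evaluating the second-order Taylor remainder of the $\log$-barrier term exactly, rather than bounding it crudely, is what produces the sharp coefficient $\mu\|X^\mu_*(X^\mu_k)^{-2}\|$ in $M_k^{H,\mu}$; and one must verify $[x^\mu_*,\xk]\subseteq\mathcal X_l$ (from $\bar\delta\le1$ in \eqref{eq:bardelta0} and Assumption~\ref{assumption.bnd_const_hess_lipschitz_1}) before invoking the Lipschitz bounds for $g$ and $H$.
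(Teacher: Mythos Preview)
Your proof is correct and follows essentially the same route as the paper's. Both arguments decompose $x_k+d_k-x^\mu_*$ into the exact Newton residual, a Hessian-noise term, and a gradient-noise term, premultiply by $(X^\mu_*)^{-1}$ so that every matrix inverse appears in the scaled form $(G^\mu_kX^\mu_*)^{-1}$ or $(\tilde G^\mu_kX^\mu_*)^{-1}$, bound the $f$-part of the Newton residual via the Lipschitz Hessian (the paper writes this with the integral mean $\bar H^\mu_k=\int_0^1\nabla^2 f(x_k+t(x^\mu_*-x_k))\,dt$, which is the same second-order Taylor remainder you invoke), and handle the barrier contribution exactly via the diagonal identity you write componentwise and the paper writes in matrix form as $(X^\mu_k)^{-2}-(X^\mu_k)^{-1}(X^\mu_*)^{-1}=-(X^\mu_*)^{-1}(X^\mu_k)^{-2}(X^\mu_k-X^\mu_*)$. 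The bound $\|\nabla\varphi^\mu(\xk)\|\le M_k^{g,\mu}e^\mu_k$ and the treatment of the remaining two terms are likewise identical to the paper's.
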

		\begin{proof}
			Consider the {noisy Newton step} \eqref{eq.noisy_d_N}.
			Note that $\tilde G_k^\mu$ and $G_k^\mu$ are nonsingular due to Lemma~\ref{lemma.G_unif_posdef_iter}. With  $\nabla \varphi^{\mu}_* = 0$ this yields
			\begin{equation}
				\label{eq.tmp00}
				\begin{split}
					\xk + d_k  =  & \; \xk  + (\tilde G_k^{\mu})^{-1}\nabla \tilde \varphi_k^{\mu} \\
					= &\; \xk  - (G_k^{\mu})^{-1}\nabla \varphi_k^{\mu} 
					\\ &  + \left( (G_k^{\mu})^{-1} - (\tilde G_k^{\mu})^{-1} \right) (\nabla \varphi_k^{\mu} - \nabla \varphi_*^{\mu})
					+ (\tilde G_k^{\mu})^{-1} \left(\nabla \varphi_k^{\mu} - \nabla \tilde \varphi_k^{\mu}  \right) \\
					=&\; \xk  - (G_k^{\mu})^{-1}(\nabla \varphi_k^{\mu} - \nabla \varphi_*^{\mu}) 
					+ \left( (G_k^{\mu})^{-1} - (\tilde G_k^{\mu})^{-1} \right) \left(\nabla \varphi_k^{\mu} - \nabla \varphi_*^{\mu}  \right)
					\\ &  + (\tilde G_k^{\mu})^{-1} \left(g^\mu_k - \tilde g^\mu_k \right) \\
					=&\; \xk  - (G_k^{\mu})^{-1}\left(g^\mu_k-g_*^\mu -\mu \left((X^\mu_k)^{-1} - (X_*^{\mu})^{-1}\right)\mathbf{e}\right) \\
					& + \left( (G_k^{\mu})^{-1} - (\tilde G_k^{\mu})^{-1} \right) \left(\nabla \varphi_k^{\mu} -\nabla \varphi_*^{\mu}\right) 
					+ (\tilde G_k^{\mu})^{-1} \left(g^\mu_k - \tilde g^\mu_k \right),     
				\end{split}
			\end{equation}
			where the third equality is obtained by the fact that $\nabla \varphi_k^{\mu} - \nabla \tilde \varphi_k^{\mu} = g^\mu_k - \tilde g^\mu_k$. In addition, by Taylor's theorem  we have 
			\begin{align}
				\label{eq.tmp11}
				g^\mu_k - g_*^\mu = \bar H^\mu_k (\xk - x_*^{\mu}),
			\end{align}
			where $
			\bar H^\mu_k := \int_0^1 \nabla^2 f(\xk + t (x_*^{\mu} - \xk)) dt$.
			Combining \eqref{eq.tmp00} and \eqref{eq.tmp11}, and subtracting $x_*^{\mu}$ from both sides of \eqref{eq.tmp00} leads to
			\begin{align*}
				\xk + d_k - x_*^{\mu} 
				=&\;  (G_k^{\mu})^{-1} \left(G_k^{\mu} - \bar H^\mu_k - \mu (X^\mu_k)^{-1} (X_*^{\mu})^{-1}   \right) (\xk - x_*^{\mu}) \\ 
				&+ \left( (G_k^{\mu})^{-1} - (\tilde G_k^{\mu})^{-1} \right) (\nabla \varphi_k^{\mu} - \nabla \varphi_*^{\mu})
				+ (\tilde G_k^{\mu})^{-1} \left(g^\mu_k - \tilde g^\mu_k \right) \\
				=&\; (G_k^{\mu})^{-1} \left(H^\mu_k - \bar H^\mu_k + \mu ((X^\mu_k)^{-2} - (X_k^\mu)^{-1} (X_*^{\mu})^{-1})   \right) (\xk - x_*^{\mu}) \\
				&+ \left( (G_k^{\mu})^{-1} - (\tilde G_k^{\mu})^{-1} \right) (\nabla \varphi_k^{\mu} - \nabla \varphi_*^{\mu})
				+  (\tilde G_k^{\mu})^{-1} \left(g^\mu_k - \tilde g^\mu_k \right).
			\end{align*}
			Multiplying this from the left by $(X_*^{\mu})^{-1}$ and rearranging terms yields
			\begin{align*}
				& (X_*^{\mu})^{-1} (x_{k} + d_k - x_*^{\mu}) \\
				=\ &   (G_k^{\mu} X_*^{\mu})^{-1} \left(  H^\mu_k - \bar H^\mu_k - \mu (X_*^{\mu})^{-1} (X^\mu_k)^{-2} (X^\mu_k - X_*^{\mu}) \right) (\xk - x_*^{\mu})\\
				&+ \left( (G_k^{\mu} X_*^{\mu})^{-1} - (\tilde G_k^{\mu} X_*^{\mu})^{-1} \right) \left(g^\mu_k - g_*^{\mu} + \mu (X_*^{\mu})^{-1} (X_k^\mu)^{-1}(\xk - x_*^{\mu})\right) \\
				&+ (\tilde G_k^{\mu} X_*^{\mu})^{-1} \left(g^\mu_k - \tilde g^\mu_k \right).   
			\end{align*}
			Taking norms from both sides along with Assumptions~\ref{assumption.bnd_err}, \ref{assumption.bnd_const_grad_lipschitz}, and \ref{assumption.bnd_const_hess_lipschitz_1} implies 
			\begin{align*}
				&\;\|x^\mu_{k} + d_k - x_*^{\mu}\|_{(X_*^{\mu})^{-2}}\\
				\leq&\;   \|(G_k^{\mu} X_*^{\mu})^{-1}\| \left\| \left( H^\mu_k - \bar H^\mu_k  - \mu X_*^{\mu} (X_k^\mu)^{-2} (X_k^\mu- X_*^{\mu})  (X_*^{\mu})^{-2}\right)  (\xk - x_*^{\mu}) \right\|\\
				&\ \ + \| (G_k^{\mu} X_*^{\mu})^{-1} - (\tilde G_k^{\mu} X_*^{\mu})^{-1} \|\|g^\mu_k - g_*^{\mu} + \mu (X_*^{\mu})^{-1} (X_k^\mu)^{-1}(\xk - x_*^{\mu})\| \\
				& \ \ + \|(\tilde G_k^{\mu} X_*^{\mu})^{-1}\| \left\|g^\mu_k - \tilde g^\mu_k \right\| \\
				\leq&\; \|(G_k^{\mu} X_*^{\mu})^{-1}\| \left( \tfrac{L_H \|X_*^{\mu}\|^2}{2}  +  \mu \|X_*^{\mu} (X_k^\mu)^{-2}\| \right) \|\xk - x_*^{\mu}\|_{(X_*^{\mu})^{-2}}^2\\
				& \ \ + \| (G_k^{\mu} X_*^{\mu})^{-1} - (\tilde G_k^{\mu} X_*^{\mu})^{-1} \| \left(L_g \|X_*^{\mu}\| + \mu \|(X_k^\mu)^{-1}\| \right) \|\xk - x_*^{\mu}\|_{(X_*^{\mu})^{-2}} \\
				& \ \ + \|(\tilde G_k^{\mu} X_*^{\mu})^{-1}\| \eps_g. 
			\end{align*}
			This completes the proof.
		\end{proof}

			Our next goal is to establish upper bounds for all the terms on the right-hand side of \eqref{eq.neigh_1st_bnd}. We accomplish this with the following lemmas.
			
			\begin{lemma}\label{lem:technical1}
				Let $\mu\in(0,\mu_2]$ with $\mu_2$
				from Lemma~\ref{lemma.G_unif_posdef_iter}, and suppose $\bar\delta\in(0,1)$ satisfies \eqref{eq:bardelta0}.
				Then for $\xk\in\Bbardelta$, 
				$\| G_k^{\mu} X_*^{\mu} -  G_*^{\mu} X_*^{\mu} \| \leq \frac{1}{2 \|(G_*^{\mu} X_*^{\mu})^{-1}\|}$.
			\end{lemma}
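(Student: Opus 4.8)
The plan is to split $G_k^{\mu}X_*^{\mu}-G_*^{\mu}X_*^{\mu}$ according to the decomposition $G^{\mu}(x)=H(x)+\mu X^{-2}$ into a Hessian part and a barrier part, estimate each over $\Bbardelta$, and then invoke the bound on $\bar\delta$ provided by Lemma~\ref{lemma.H_II_bnd}, which is applicable because $\mu\le\mu_2\le\mu_1$. Concretely, I would write
\[
G_k^{\mu}X_*^{\mu}-G_*^{\mu}X_*^{\mu}=(H_k^{\mu}-H_*^{\mu})X_*^{\mu}+\mu\big((X_k^{\mu})^{-2}-(X_*^{\mu})^{-2}\big)X_*^{\mu},
\]
and bound the two summands separately.

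For the Hessian part, since $\bar\delta\le 1$ we have $\xk\in\Bbardelta\subseteq\bar B_1(x_*^{\mu})\subseteq\XXX_l$, so Assumption~\ref{assumption.bnd_const_hess_lipschitz_1} gives $\|H_k^{\mu}-H_*^{\mu}\|\le L_H\|\xk-x_*^{\mu}\|$; combined with $\|\xk-x_*^{\mu}\|\le\|X_*^{\mu}\|\,\|(X_*^{\mu})^{-1}(\xk-x_*^{\mu})\|=\|x_*^{\mu}\|_\infty\, e_k^{\mu}\le\|x_*^{\mu}\|_\infty\bar\delta$ and $\|X_*^{\mu}\|=\|x_*^{\mu}\|_\infty$, this yields $\|(H_k^{\mu}-H_*^{\mu})X_*^{\mu}\|\le L_H\|x_*^{\mu}\|_\infty^{2}\,\bar\delta$. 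For the barrier part, I would observe that the matrix is diagonal with $i$th entry $\mu\big(\tfrac{x_{*,i}^{\mu}}{(x_{k,i}^{\mu})^{2}}-\tfrac{1}{x_{*,i}^{\mu}}\big)=z_{*,i}^{\mu}\big((\tfrac{x_{*,i}^{\mu}}{x_{k,i}^{\mu}})^{2}-1\big)$, using $z_{*,i}^{\mu}=\mu/x_{*,i}^{\mu}$. Lemma~\ref{lemma.DX_bnds} with $\delta=\bar\delta\in(0,1)$ bounds $\tfrac{x_{*,i}^{\mu}}{x_{k,i}^{\mu}}$ between $\tfrac{1}{1+\bar\delta}$ and $\tfrac{1}{1-\bar\delta}$, so $\big|(\tfrac{x_{*,i}^{\mu}}{x_{k,i}^{\mu}})^{2}-1\big|\le\tfrac{1}{(1-\bar\delta)^{2}}-1=\tfrac{(2-\bar\delta)\bar\delta}{(1-\bar\delta)^{2}}$; taking the maximum over $i$ and using $|z_{*,i}^{\mu}|\le\|z_*^{\mu}\|_\infty$ gives $\big\|\mu((X_k^{\mu})^{-2}-(X_*^{\mu})^{-2})X_*^{\mu}\big\|\le\tfrac{2-\bar\delta}{(1-\bar\delta)^{2}}\|z_*^{\mu}\|_\infty\,\bar\delta$.

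Adding the two estimates gives $\|G_k^{\mu}X_*^{\mu}-G_*^{\mu}X_*^{\mu}\|\le\big(L_H\|x_*^{\mu}\|_\infty^{2}+\tfrac{2-\bar\delta}{(1-\bar\delta)^{2}}\|z_*^{\mu}\|_\infty\big)\bar\delta=\tilde L^{\mu}\bar\delta$, with $\tilde L^{\mu}$ exactly the constant defined in Lemma~\ref{lemma.H_II_bnd}. Finally, since $\mu\le\mu_2\le\mu_1$, Lemma~\ref{lemma.H_II_bnd} ensures that $\bar\delta$ satisfies \eqref{eq:bardelta}, hence $\bar\delta\le\tfrac{1}{2\tilde L^{\mu}\|(G_*^{\mu}X_*^{\mu})^{-1}\|}$, i.e.\ $\tilde L^{\mu}\bar\delta\le\tfrac{1}{2\|(G_*^{\mu}X_*^{\mu})^{-1}\|}$, which is the assertion (note that $G_*^{\mu}X_*^{\mu}$ is nonsingular by Lemma~\ref{lemma.G_unif_posdef_iter} applied at $\xk=x_*^{\mu}$). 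The only place requiring care is making the barrier estimate land on the precise constant $\tfrac{2-\bar\delta}{(1-\bar\delta)^{2}}$ — in particular checking that this quantity dominates $1-\tfrac{1}{(1+\bar\delta)^{2}}$ for $\bar\delta\in(0,1)$ — so that the resulting prefactor matches $\tilde L^{\mu}$ verbatim and \eqref{eq:bardelta} can be applied directly; the rest is routine.
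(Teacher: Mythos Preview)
Your proposal is correct and follows essentially the same route as the paper: split $G_k^{\mu}X_*^{\mu}-G_*^{\mu}X_*^{\mu}$ into a Hessian part bounded via Assumption~\ref{assumption.bnd_const_hess_lipschitz_1} and a barrier part bounded via Lemma~\ref{lemma.DX_bnds}, combine to obtain $\tilde L^{\mu}\bar\delta$, and finish with \eqref{eq:bardelta}. The only cosmetic difference is that the paper handles the barrier term by the matrix factorization $\mu((X_k^{\mu})^{-2}-(X_*^{\mu})^{-2})X_*^{\mu}=\mu(X_*^{\mu})^{-1}(X_*^{\mu}(X_k^{\mu})^{-1})(I+X_*^{\mu}(X_k^{\mu})^{-1})(X_*^{\mu})^{-1}(X_*^{\mu}-X_k^{\mu})$ and bounds each factor, whereas you read off the diagonal entries directly; both arrive at exactly the same constant $\tfrac{2-\bar\delta}{(1-\bar\delta)^2}\|z_*^{\mu}\|_\infty$.
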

			\begin{proof}
				By the definition of $G_k^{\mu}$ and $G_*^{\mu}$, and Assumption~\ref{assumption.bnd_const_hess_lipschitz_1} we have
				\begin{align*}
					&\; \|G_k^{\mu} X_*^{\mu} -  G_*^{\mu} X_*^{\mu} \| \\\leq &\;\| (H^\mu_k  - H_*^\mu) X_*^{\mu} + \mu ((X_k^{\mu})^{-2} - \mu (X_*^\mu)^{-2}) X_*^{\mu}   \| \\
					\leq &\; L_H \|X_*^{\mu}\|^2 \|\xk - x_*^{\mu} \|_{(X_*^{\mu})^{-2}} + \mu  \|(X_*^{\mu})^{-2} (X_k^\mu)^{-2} ((X_*^{\mu})^2 - (X_k^\mu)^2) X_*^{\mu}\| \\
					\leq &\; L_H \|x_*^{\mu}\|_{\infty}^2 \|\xk - x_*^{\mu} \|_{(X_*^{\mu})^{-2}}\\
					&+ \mu \|(X_*^{\mu})^{-2} (X_k^\mu)^{-2} (X_*^{\mu} + X_k^\mu) (X_*^{\mu})^2\|\|\xk - x_*^{\mu}\|_{(X_*^{\mu})^{-2}} \\
					= & \, \left(L_H \|x_*^{\mu}\|_{\infty}^2 + \mu \|
					%
					%
					(X_*^{\mu})^{-1} (X_*^{\mu} (X_k^\mu)^{-1}) (I+X_*^{\mu} (X_k^\mu)^{-1}) \|\right)
					\|\xk - x_*^{\mu}\|_{(X_*^{\mu})^{-2}} \\
					\leq & \, \left(L_H \|x_*^{\mu}\|_{\infty}^2 + \mu \|(X_*^{\mu})^{-1}\|  \| X_*^{\mu} (X_k^\mu)^{-1}\| (1 + \| X_*^{\mu} (X_k^\mu)^{-1} \|)  \right) \|\xk - x_*^{\mu}\|_{(X_*^{\mu})^{-2}}.
				\end{align*}
				Since $\|\xk - x_*^{\mu} \|_{(X_*^{\mu})^{-2}} \leq \bar \delta$ ($\bar \delta$ satisfies \eqref{eq:bardelta}),  Lemma~\ref{lemma.DX_bnds} and $z_*^\mu=\mu(X_*^{\mu})^{-1}\mathbf e$ 
				yield
				\begin{align*}
					\|G_k^\mu X_*^{\mu} -  G_*^\mu X_*^{\mu} \| 
					\leq & \; \left(L_H \|x_*^{\mu}\|_{\infty}^2 +   \tfrac{\|z_*^\mu\|_{\infty}}{1 - \bar \delta} \left(1 + \tfrac{1}{1 - \bar \delta}\right)  \right) \left\|\xk - x_*^{\mu}\right\|_{(X_*^{\mu})^{-2}} \\
					= & \;\tilde L^\mu \|\xk - x_*^{\mu} \|_{(X_*^{\mu})^{-2}} 
					\leq  \tfrac{1}{2 \|(G_*^\mu X_*^{\mu})^{-1}\|} .
				\end{align*}
			\end{proof}

			\begin{lemma} 
				\label{lemma.H_inv_bound}
				Let $\mu_2$ be the threshold from Lemma~\ref{lemma.G_unif_posdef_iter} and suppose $\bar\delta\in(0,1)$ satisfies \eqref{eq:bardelta0}.
				Then, there exists $\mu_3 \in (0, \mu_2]$ so that for all $\xk\in\Bbardelta$ one has
				\[
				\left \|(G_k^\mu X_*^{\mu})^{-1}\right\|\leq 4 \left\| (\Gamma_*^0)^{-1}\right\|.
				\]
			\end{lemma}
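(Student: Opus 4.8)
The plan is to apply the perturbation estimate of Lemma~\ref{lemma:CT_Kelley} twice: first to compare $G_*^\mu X_*^\mu$ with its limit $\Gamma_*^0$, and then to compare $G_k^\mu X_*^\mu$ with $G_*^\mu X_*^\mu$.

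First I would use \eqref{eq:Gamma0limit}, which states $G_*^\mu X_*^\mu\to\Gamma_*^0$ as $\mu\to0$, together with the nonsingularity of $\Gamma_*^0$ (Assumption~\ref{as:suff_optcond}), to choose a threshold $\mu_3\in(0,\mu_2]$ small enough that $\|G_*^\mu X_*^\mu-\Gamma_*^0\|\leq\tfrac{1}{2\|(\Gamma_*^0)^{-1}\|}$ for all $\mu\in[0,\mu_3]$. Applying Lemma~\ref{lemma:CT_Kelley} with $A=\Gamma_*^0$ and $B=G_*^\mu X_*^\mu$ then yields that $G_*^\mu X_*^\mu$ is nonsingular and $\|(G_*^\mu X_*^\mu)^{-1}\|<2\|(\Gamma_*^0)^{-1}\|$ for all $\mu\in[0,\mu_3]$.

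Next, for any $\mu\in(0,\mu_3]$ and any $\xk\in\Bbardelta$, Lemma~\ref{lem:technical1} gives $\|G_k^\mu X_*^\mu-G_*^\mu X_*^\mu\|\leq\tfrac{1}{2\|(G_*^\mu X_*^\mu)^{-1}\|}$, so a second application of Lemma~\ref{lemma:CT_Kelley}, this time with $A=G_*^\mu X_*^\mu$ and $B=G_k^\mu X_*^\mu$, shows $G_k^\mu X_*^\mu$ is nonsingular and $\|(G_k^\mu X_*^\mu)^{-1}\|<2\|(G_*^\mu X_*^\mu)^{-1}\|<4\|(\Gamma_*^0)^{-1}\|$, which is the claimed bound.

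There is no substantive obstacle here: the argument is a routine ``nested'' use of the Banach-lemma-type estimate, and the only care needed is in selecting $\mu_3$ so that the first perturbation is small enough to get the constant $2$ in the inverse bound; everything downstream is then automatic from Lemma~\ref{lem:technical1}, whose $\mu$-uniform constant $\tilde L^\mu$ was already controlled in Lemma~\ref{lemma.H_II_bnd}. I would also note that all the inequalities obtained are strict, so they in particular imply the (non-strict) bound stated in the lemma.
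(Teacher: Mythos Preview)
Your proposal is correct and follows essentially the same two-step application of Lemma~\ref{lemma:CT_Kelley} as the paper: first bound $\|(G_*^\mu X_*^\mu)^{-1}\|$ by $2\|(\Gamma_*^0)^{-1}\|$ via continuity in $\mu$, then combine with Lemma~\ref{lem:technical1} to get the factor $4$. The only cosmetic difference is that the paper writes out an explicit Lipschitz estimate for $\|G_*^\mu X_*^\mu-\Gamma_*^0\|$ rather than invoking the limit \eqref{eq:Gamma0limit} directly, but the underlying argument is the same.
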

			\begin{proof}
				By Assumptions~\ref{assumption.bnd_const_grad_lipschitz} and \ref{assumption.bnd_const_hess_lipschitz_1} and $\mu (X_*^{\mu})^{-1} = Z_*^{\mu}$ one has for all $\mu\in(0,\mu_2]$
				\begin{align}
					\label{eq.t11}
					\begin{split}
						\left\|G_*^{\mu}X_*^{\mu} -  \Gamma_*^0 \right\| &= \left\|H_*^\mu X_*^{\mu} + \mu (X_*^\mu)^{-1} - \left(H_*^0 X_*^0 + Z_*^0\right)\right\| \\
						& = \left \|H_*^\mu X_*^{\mu} - H_*^\mu X_*^0 + H_*^\mu X_*^0 - H_*^0 X_*^0 + Z_*^{\mu} - Z_*^0 \right\| \\
						& \leq \|H_*^{\mu} \| \|x_*^{\mu} - x_*^0 \| + L_H \|x_*^0\|_{\infty}  \|x_*^{\mu} - x_*^0 \| + \|z_*^{\mu} - z_*^0 \| \\
						& \leq \max \left \{ L_g + L_H \|x_*^0 \|_{\infty}, 1\right\} \left(  \|x_*^{\mu} - x_*^0 \| + \|z_*^{\mu} - z_*^0 \| \right),
					\end{split}
				\end{align}
				and by continuity of $(x_*^{\mu},z_*^{\mu})$ as a function of $\mu$
				\begin{equation}
					\label{eq.t22}
					\|x_*^{\mu} - x_*^0 \| + \|z_*^{\mu} - z_*^0 \| \leq \tfrac{1}{2\max \left \{ L_g + L_H \|x_*^0 \|_{\infty}, 1\right\} \|(\Gamma_*^0)^{-1} \|},
				\end{equation}
				for all $\mu\in(0,\mu_3]$ with $\mu_3\in(0,\mu_2]$ sufficiently small.
				Fixing $\mu\in(0,\mu_3]$ and combining \eqref{eq.t11} and \eqref{eq.t22} yields
				$\left\|G_*^{\mu}X_*^{\mu} -  \Gamma_*^0 \right\| \leq \tfrac{1}{2 \|(\Gamma_*^0)^{-1} \|}$.
				By Lemma~\ref{lemma:CT_Kelley} one has
				\begin{align}
					\label{eq.t33}
					\left\|(G_*^{\mu}X_*^{\mu})^{-1}\right\| \leq 2 \left\|(\Gamma_*^{0})^{-1}\right\|.
				\end{align}
				By Lemma~\ref{lem:technical1}, we have $ \| G_k^{\mu} X_*^{\mu} -  G_*^{\mu} X_*^{\mu} \| \leq \tfrac{1}{2 \|(G_*^{\mu} X_*^{\mu})^{-1}\|}$. This, along with Lemma~\ref{lemma:CT_Kelley} implies $\|(G_k^{\mu} X_*^{\mu})^{-1}\| \leq 2\| (G_*^{\mu} X_*^{\mu})^{-1} \|$ and by \eqref{eq.t33} the desired result follows.
			\end{proof}

			\begin{lemma}
				\label{lemma.H_inv_prelim_bounds}
				Let $\mu\in(0,\mu_3]$ where $\mu_3$ is the threshold from Lemma~\ref{lemma.H_inv_bound}, and suppose $\bar\delta\in(0,1)$ satisfies \eqref{eq:bardelta0}.
				Then for $\xk\in\Bbardelta$
				\begin{align*}
					\| (G_k^{\mu} X_*^{\mu})^{-1}  - ( \tilde G_k^{\mu} X_*^{\mu})^{-1}\| \leq&\; 2  \| (G^\mu_k X_*^{\mu})^{-1}\|^2 {\|x_*^{\mu}\|_{\infty}} \eps_H, \\
					\|(\tilde G_k^{\mu} X_*^{\mu})^{-1} \| 
					\leq &\; 2\|(G_k^{\mu} X_*^{\mu})^{-1} \|.
				\end{align*}
			\end{lemma}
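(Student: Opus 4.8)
The plan is to invoke the Kelley perturbation bound (Lemma~\ref{lemma:CT_Kelley}) with the choice $A = G_k^{\mu} X_*^{\mu}$ and $B = \tilde G_k^{\mu} X_*^{\mu}$. The key structural observation is that the barrier contribution $\mu (X_k^\mu)^{-2}$ appears identically in $G_k^\mu = H_k^\mu + \mu (X_k^\mu)^{-2}$ and in $\tilde G_k^\mu = \tilde H_k^\mu + \mu (X_k^\mu)^{-2}$, so the difference reduces to $A - B = (H_k^\mu - \tilde H_k^\mu) X_*^\mu$. Since $X_*^\mu$ is diagonal with $\|X_*^\mu\| = \|x_*^{\mu}\|_{\infty}$, and $\xk\in\Bbardelta\subseteq\bar B_1(x_*^\mu)\subseteq\XXX_l$ (the first inclusion because $\bar\delta<1$, the second by Assumption~\ref{assumption.bnd_const_hess_lipschitz_1} as $\mu\le\mu_3\le\hat\mu$), Assumption~\ref{as.eps_H} applies at $\xk$ and gives
\[
\|A - B\| \le \|H_k^\mu - \tilde H_k^\mu\|\,\|X_*^\mu\| \le \eps_H\,\|x_*^{\mu}\|_{\infty}.
\]

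Next I would verify the hypothesis $\|A - B\| \le \tfrac{1}{2\|A^{-1}\|}$ of Lemma~\ref{lemma:CT_Kelley}. By Lemma~\ref{lem:technical1} we have $\|G_k^\mu X_*^\mu - G_*^\mu X_*^\mu\| \le \tfrac{1}{2\|(G_*^\mu X_*^\mu)^{-1}\|}$, so Lemma~\ref{lemma:CT_Kelley} (applied with $A = G_*^\mu X_*^\mu$, $B = G_k^\mu X_*^\mu$) yields $\|(G_k^\mu X_*^\mu)^{-1}\| \le 2\|(G_*^\mu X_*^\mu)^{-1}\|$. Combining this with \eqref{eq:epsH} of Lemma~\ref{lemma.H_II_bnd} (valid since $\mu\le\mu_3\le\mu_1$) gives
\[
\eps_H\,\|x_*^{\mu}\|_{\infty}\,\|(G_k^\mu X_*^\mu)^{-1}\| \le 2\,\eps_H\,\|x_*^{\mu}\|_{\infty}\,\|(G_*^\mu X_*^\mu)^{-1}\| < \tfrac12,
\]
so that $\|A - B\| \le \eps_H\|x_*^{\mu}\|_{\infty} < \tfrac{1}{2\|(G_k^\mu X_*^\mu)^{-1}\|} = \tfrac{1}{2\|A^{-1}\|}$, as required.

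With the hypothesis in place, Lemma~\ref{lemma:CT_Kelley} directly delivers both assertions: the conclusion $\|B^{-1}\| < 2\|A^{-1}\|$ is exactly $\|(\tilde G_k^\mu X_*^\mu)^{-1}\| \le 2\|(G_k^\mu X_*^\mu)^{-1}\|$, and the conclusion $\|A^{-1} - B^{-1}\| \le 2\|A^{-1}\|^2\|A - B\|$ together with the bound $\|A - B\| \le \eps_H\|x_*^{\mu}\|_{\infty}$ from the first paragraph gives $\|(G_k^\mu X_*^\mu)^{-1} - (\tilde G_k^\mu X_*^\mu)^{-1}\| \le 2\|(G_k^\mu X_*^\mu)^{-1}\|^2\|x_*^{\mu}\|_{\infty}\eps_H$.

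I do not expect a real obstacle here; the only point that needs care is the bookkeeping in the second step — chaining Lemma~\ref{lem:technical1}, the factor-$2$ from Lemma~\ref{lemma:CT_Kelley}, and the strict inequality \eqref{eq:epsH} so that the Kelley hypothesis comes out with the clean constant $\tfrac12$ rather than something weaker, and confirming that the restriction $\mu\le\mu_3$ (hence $\mu\le\mu_1$ and $\mu\le\mu_2$) makes all the cited results simultaneously available. Routing instead through Lemma~\ref{lemma.H_inv_bound} and Assumption~\ref{as.eps_H} would also work but yields a worse numerical constant, so the route via \eqref{eq:epsH} is preferable.
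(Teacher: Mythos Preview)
Your proposal is correct and follows essentially the same approach as the paper: both arguments bound $\|G_k^\mu X_*^\mu - \tilde G_k^\mu X_*^\mu\|$ by $\eps_H\|x_*^\mu\|_\infty$ via the cancellation of the barrier term, then chain Lemma~\ref{lem:technical1} with Lemma~\ref{lemma:CT_Kelley} and \eqref{eq:epsH} to verify the Kelley hypothesis with $A=G_k^\mu X_*^\mu$ and $B=\tilde G_k^\mu X_*^\mu$, after which both conclusions follow directly. Your write-up is in fact slightly more careful than the paper's in justifying that $\xk\in\XXX_l$ so that Assumption~\ref{as.eps_H} applies.
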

			\begin{proof}
				By Lemmas~\ref{lemma:CT_Kelley} and \ref{lem:technical1}, one concludes that
				{$\|(G_k^{\mu} X_*^{\mu})^{-1} \| \leq  2 \|(G_*^{\mu} X_*^{\mu})^{-1}\|$}.
				Hence, by \eqref{eq:epsH} we obtain
				\begin{align*}
					\|G_k^{\mu} X_*^{\mu} -  \tilde G_k^{\mu} X_*^{\mu}\| =&\; \| H^\mu_k -  \tilde H^\mu_k \| \|X_*^{\mu}\|
					\leq\;  \|x_*^{\mu}\|_{\infty}\eps_H \\
					\leq& \;\tfrac{1}{4 \|(G_*^{\mu} X_*^{\mu})^{-1}\|} \leq
					\tfrac{1}{2 \|(G_k^{\mu} X_*^{\mu})^{-1}\|},
				\end{align*}
				and by Lemma~\ref{lemma:CT_Kelley},  we get the desired results.
			\end{proof}

			\begin{theorem}
				\label{thm_main_neigh_1b}
				Let $\mu_3$ be the threshold from Lemma~\ref{lemma.H_inv_bound} and suppose $\bar\delta\in(0,1)$ satisfies \eqref{eq:bardelta0}.
				Furthermore, let $\xi_M \in (1, \infty)$ and define
				\begin{align}
					\begin{split}\label{eq:Mconstants}
						M_{2} =&\; \xi_M \cdot 4\|(\Gamma_*^0)^{-1}\| \left( \tfrac{L_H \|x_*^{0}\|_{\infty}^2}{2} + \tfrac{\|z^0_*\|_{\infty}}{(1 - \bar \delta)^2} \right), \\
						M_{1} =&\; \xi_M \cdot 32\|(\Gamma_*^0)^{-1}\|^2\|x_*^{0}\|_{\infty} \left(L_g \|x_*^{0} \|_{\infty} + \tfrac{\|z^0_*\|_{\infty}}{1 -\bar \delta} \right),\\
						M_{0} = &\; 8\|(\Gamma_*^0)^{-1}\|. 
					\end{split}
				\end{align}
				Then there exists $\bar\mu\in(0,\mu_3]$ so that for all $\mu\in(0,\bar\mu]$
				\begin{equation}\label{eq:ekp}
					e_k^{\mu,+}
					\leq M_{2} (e^\mu_k)^2
					+  M_{1} \eps_H e^\mu_k
					+ M_{0} \epsilon_g,
				\end{equation}
				whenever $\xk\in\Bbardelta$.
			\end{theorem}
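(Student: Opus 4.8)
The plan is to start from the iteration-dependent estimate of Lemma~\ref{lemma.bound_const_prelim_x_neigh} and replace each of its three $k$- and $\mu$-dependent coefficients by a uniform constant expressed through the quantities at $\mu=0$. Recall that for $\xk\in\Bbardelta$ that lemma gives
\[
e_k^{\mu,+} \leq M_k^{H,\mu}\,\|(G_k^{\mu}X_*^{\mu})^{-1}\|\,(e^\mu_k)^2 + M_k^{g,\mu}\,\| (G_k^{\mu}X_*^{\mu})^{-1} - (\tilde G^\mu_k X_*^{\mu})^{-1}\|\,e^\mu_k + \|(\tilde G_k^{\mu} X_*^{\mu})^{-1} \|\,\epsilon_g,
\]
with $M_k^{H,\mu} = \tfrac{L_H\|x_*^\mu\|_\infty^2}{2} + \mu\|X_*^\mu(X_k^\mu)^{-2}\|$ and $M_k^{g,\mu} = L_g\|x_*^\mu\|_\infty + \mu\|(X_k^\mu)^{-1}\|$. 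It therefore suffices to bound $M_k^{H,\mu}\|(G_k^{\mu}X_*^{\mu})^{-1}\|$ by $M_2$, the middle coefficient by $M_1\eps_H$, and $\|(\tilde G_k^{\mu} X_*^{\mu})^{-1}\|$ by $M_0$, for all sufficiently small $\mu$.

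First I would control the barrier-induced diagonal terms in $M_k^{H,\mu}$ and $M_k^{g,\mu}$. Using $z_*^\mu = \mu(X_*^\mu)^{-1}\mathbf e$, the diagonal matrix $\mu X_*^\mu(X_k^\mu)^{-2}$ has entries $z^\mu_{*,i}\,(x^\mu_{*,i}/x_{k,i})^2$, which by Lemma~\ref{lemma.DX_bnds} and $\bar\delta\in(0,1)$ are at most $\|z_*^\mu\|_\infty/(1-\bar\delta)^2$; similarly $\mu\|(X_k^\mu)^{-1}\| \leq \|z_*^\mu\|_\infty/(1-\bar\delta)$. Hence $M_k^{H,\mu} \leq \tfrac{L_H\|x_*^\mu\|_\infty^2}{2} + \tfrac{\|z_*^\mu\|_\infty}{(1-\bar\delta)^2}$ and $M_k^{g,\mu} \leq L_g\|x_*^\mu\|_\infty + \tfrac{\|z_*^\mu\|_\infty}{1-\bar\delta}$, both independent of $k$.

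Next I would plug in the operator-norm bounds already proved. Lemma~\ref{lemma.H_inv_bound} gives $\|(G_k^{\mu}X_*^{\mu})^{-1}\| \leq 4\|(\Gamma_*^0)^{-1}\|$, so the coefficient of $(e^\mu_k)^2$ is at most $4\|(\Gamma_*^0)^{-1}\|\big(\tfrac{L_H\|x_*^\mu\|_\infty^2}{2} + \tfrac{\|z_*^\mu\|_\infty}{(1-\bar\delta)^2}\big)$. Lemma~\ref{lemma.H_inv_prelim_bounds} gives $\| (G_k^{\mu}X_*^{\mu})^{-1} - (\tilde G^\mu_k X_*^{\mu})^{-1}\| \leq 2\|(G_k^{\mu}X_*^{\mu})^{-1}\|^2\|x_*^\mu\|_\infty\eps_H \leq 32\|(\Gamma_*^0)^{-1}\|^2\|x_*^\mu\|_\infty\eps_H$, so the coefficient of $e^\mu_k$ is at most $32\|(\Gamma_*^0)^{-1}\|^2\|x_*^\mu\|_\infty\big(L_g\|x_*^\mu\|_\infty + \tfrac{\|z_*^\mu\|_\infty}{1-\bar\delta}\big)\eps_H$; and $\|(\tilde G_k^{\mu}X_*^{\mu})^{-1}\| \leq 2\|(G_k^{\mu}X_*^{\mu})^{-1}\| \leq 8\|(\Gamma_*^0)^{-1}\| = M_0$, which already gives the last term without any further work.

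Finally I would remove the remaining $\mu$-dependence that enters only through $x_*^\mu$ and $z_*^\mu$. By the continuity of the central path (Theorem~\ref{thm:central_path}), as $\mu\to 0$ the coefficient of $(e^\mu_k)^2$ converges to $4\|(\Gamma_*^0)^{-1}\|\big(\tfrac{L_H\|x_*^0\|_\infty^2}{2} + \tfrac{\|z_*^0\|_\infty}{(1-\bar\delta)^2}\big)$ and that of $e^\mu_k$ to $32\|(\Gamma_*^0)^{-1}\|^2\|x_*^0\|_\infty\big(L_g\|x_*^0\|_\infty + \tfrac{\|z_*^0\|_\infty}{1-\bar\delta}\big)\eps_H$. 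Since $\xi_M>1$ is fixed, there exists $\bar\mu\in(0,\mu_3]$ so that for all $\mu\in(0,\bar\mu]$ these coefficients are bounded by $M_2$ and $M_1\eps_H$, respectively (all of Lemmas~\ref{lemma.DX_bnds}--\ref{lemma.H_inv_prelim_bounds} being applicable since $\bar\mu\le\mu_3\le\mu_2\le\mu_1$), which yields \eqref{eq:ekp} whenever $\xk\in\Bbardelta$. The only genuinely delicate point is keeping all constants independent of $\mu$; this is precisely what the rewriting $z_*^\mu=\mu(X_*^\mu)^{-1}\mathbf e$ together with Lemma~\ref{lemma.DX_bnds} (for the barrier diagonals) and central-path continuity (for $x_*^\mu,z_*^\mu$ and $G_*^\mu X_*^\mu\to\Gamma_*^0$) accomplish, and I expect the bookkeeping of the $\mu$-thresholds to be the most error-prone part rather than a real obstacle.
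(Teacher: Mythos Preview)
Your proposal is correct and follows essentially the same route as the paper: start from Lemma~\ref{lemma.bound_const_prelim_x_neigh}, bound $\|(G_k^\mu X_*^\mu)^{-1}\|$ and $\|(\tilde G_k^\mu X_*^\mu)^{-1}\|$ via Lemmas~\ref{lemma.H_inv_bound} and~\ref{lemma.H_inv_prelim_bounds}, control the barrier-diagonal pieces of $M_k^{H,\mu}$ and $M_k^{g,\mu}$ through Lemma~\ref{lemma.DX_bnds} together with $z_*^\mu=\mu(X_*^\mu)^{-1}\mathbf e$, and then invoke central-path continuity and $\xi_M>1$ to pass from the $\mu$-dependent coefficients to $M_0,M_1,M_2$. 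Your write-up is in fact slightly more explicit than the paper's about how the bounds $\|z_*^\mu\|_\infty/(1-\bar\delta)^2$ and $\|z_*^\mu\|_\infty/(1-\bar\delta)$ arise.
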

	
			\begin{proof}
				By Lemmas~\ref{lemma.bound_const_prelim_x_neigh} and \ref{lemma.H_inv_prelim_bounds} one has
				\begin{align*}
					e_{k}^+
					\leq M_k^{H, \mu} \left\|(G_k^{\mu} X_*^{\mu})^{-1}\right\| (e^\mu_k)^2
					+  2 M_k^{g, \mu} \left\|(G_k^{\mu} X_*^{\mu})^{-1}\right\|^2 \|x_*^{\mu}\|_{\infty} \eps_H e^\mu_k 
					+ 2 \|( G_k^{\mu} X_*^{\mu})^{-1} \| \epsilon_g.  
				\end{align*}
				Lemmas~\ref{lemma.DX_bnds} and \ref{lemma.H_inv_bound}, together with \eqref{eq:epsH} and \eqref{eq:bardelta}, imply that \eqref{eq:ekp} holds with
				\begin{align*}
					M^0_{2} =&\; 4\|(\Gamma_*^0)^{-1}\| \left( \tfrac{L_H \|x_*^{\mu}\|_{\infty}^2}{2} + \tfrac{\|z^\mu_*\|_{\infty}}{(1 - \bar \delta)^2} \right), \\
					M^0_{1} =&\; 32\|(\Gamma_*^0)^{-1}\|^2\|x_*^{\mu}\|_{\infty} \left(L_g \|x_*^{\mu} \|_{\infty} + \tfrac{\|z^\mu_*\|_{\infty}}{1 -\bar \delta} \right),\\
					M^0_{0} = &\; 8\|(\Gamma_*^0)^{-1}\|.
				\end{align*}
				Since $x_*^\mu$ and $z_*^\mu$ are continuous due to Theorem~\ref{thm:central_path} and since $\xi_M>1$, there exists $\bar\mu\in(0,\mu_3]$ so that \eqref{eq:ekp} holds for all $\mu\in(0,\bar\mu]$ with the constants defined in \eqref{eq:Mconstants}.
			\end{proof}
			
			
			\subsection{Convergence into neighborhoods around the optimal solution}
			\label{sec:local_neighborhood}
			
			We first prove a uniform lower bound on the step size, to ensure sufficient progress is made in every iteration.
			\begin{lemma}\label{lem:alpha_lb}
				Let $\mu>0$.
				Then there exists $\bar \alpha^\mu >0$, such that
				$\alpha_k\geq\bar\alpha^\mu/2$ for all $k \in \NN$, where $\alpha_k$ is step size sequence generated in Steps~\ref{s:armijo_begin}--\ref{s:armijo_end} when Algorithm~\ref{alg.bnd_const_fixed_mu} is executed for fixed $\mu$.
			\end{lemma}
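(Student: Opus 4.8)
The plan is to show that, for a fixed $\mu>0$, the relaxed Armijo condition \eqref{eq.relaxed_armijo_bnd} is satisfied by \emph{every} sufficiently small trial step size, with the corresponding threshold bounded away from zero uniformly in $k$; the claim then follows from the usual backtracking argument. The key mechanism is the relaxation $\eps_R>2\eps_f$: it guarantees that the line search never has to push the step below a $k$-independent constant, even for iterates near the solution (in contrast to the step-size estimate inside the proof of Theorem~\ref{thm.neigh}, which was only available for iterates outside the neighborhood $\CCC$).

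First I would collect the bounds available for fixed $\mu$. By Lemma~\ref{lemma.lb_on_x_bnd_const}, $\xk\geq\delta_x\mathbf e$ for all $k$, and by Lemma~\ref{lemma.bnd_d}, $\|d_k\|\leq\Delta_d$ for all $k$; together with Assumptions~\ref{assumption.bnd_err}, \ref{assumption.bnd_const_grad_lipschitz}, and \ref{as:Hbounded} this makes $\|\nabla\varphi^\mu(\xk)\|$, $\|\nabla\tilde\varphi^\mu(\xk)\|$, $\|(\Xk)^{-2}\|$, and $\|\breve H_k^\mu\|$ (bounded by $L_g$, since $\nabla f$ is $L_g$-Lipschitz on $\XXX$) all uniformly bounded over $k$ by constants that may depend on $\mu$. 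Hence there exist $C_1^\mu,C_2^\mu\in\RR_{>0}$ with $\bigl|\alpha^{-1}\bigl(\alpha\,\nabla\varphi^\mu(\xk)^Td_k-\nu\alpha\,\nabla\tilde\varphi^\mu(\xk)^Td_k\bigr)\bigr|\leq C_1^\mu$ and $\tfrac12 d_k^T\breve G_k^\mu d_k\leq C_2^\mu$ for all $k$, where $\breve G_k^\mu=\breve H_k^\mu+\tfrac{2\mu}{1-\tau}(\Xk)^{-2}$ as in the proof of Lemma~\ref{lemma.non-noisy-bnd-iterations}.

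Next, for any $\alpha\in(0,\alpha_k^{\max}]$ I would apply Taylor's theorem together with Lemma~\ref{lemma:barrier_bounded} exactly as in \eqref{eq.dummy_taylor} to obtain $\varphi^\mu(\xk+\alpha d_k)-\varphi^\mu(\xk)\leq\alpha\,\nabla\varphi^\mu(\xk)^Td_k+\tfrac{\alpha^2}{2}d_k^T\breve G_k^\mu d_k$, and then add the bounds $|\vareps_f|\leq\eps_f$ at $\xk$ and at $\xk+\alpha d_k$ to pass to $\tilde\varphi^\mu$. This yields
\[
\tilde\varphi^\mu(\xk+\alpha d_k)-\tilde\varphi^\mu(\xk)-\nu\alpha\,\nabla\tilde\varphi^\mu(\xk)^Td_k\;\leq\;\alpha\,C_1^\mu+\alpha^2 C_2^\mu+2\eps_f
\]
for all $\alpha\in(0,\alpha_k^{\max}]$ and all $k$. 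Since $\eps_R>2\eps_f$, I can then fix $\bar\alpha^\mu\in(0,\bar\alpha]$ (with $\bar\alpha$ from \eqref{eq.bar_bar_alpha}) small enough that $\alpha\,C_1^\mu+\alpha^2 C_2^\mu+2\eps_f\leq\eps_R$ for all $\alpha\in(0,\bar\alpha^\mu]$. Recalling from the proof of Lemma~\ref{lemma.non-noisy-bnd-iterations} that $\bar\alpha\leq\alpha_k^{\max}$ for every $k$, we also have $\bar\alpha^\mu\leq\alpha_k^{\max}$, so the above display applies and shows that \eqref{eq.relaxed_armijo_bnd} holds at every $\alpha\in(0,\bar\alpha^\mu]$, for every $k$.

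Finally I would invoke the backtracking logic of Steps~\ref{s:armijo_begin}--\ref{s:armijo_end}: if the initial trial $\alpha_k=\alpha_k^{\max}$ is accepted then $\alpha_k\geq\bar\alpha\geq\bar\alpha^\mu>\bar\alpha^\mu/2$; otherwise the accepted $\alpha_k$ was preceded by the rejected value $2\alpha_k$, which by the previous paragraph cannot lie in $(0,\bar\alpha^\mu]$, so $\alpha_k>\bar\alpha^\mu/2$. Either way $\alpha_k\geq\bar\alpha^\mu/2$ for all $k$, as claimed. I do not expect a genuine obstacle here; the only point requiring care is the uniformity of $C_1^\mu$ and $C_2^\mu$ in $k$ for the fixed $\mu$, which is precisely what Lemmas~\ref{lemma.lb_on_x_bnd_const} and \ref{lemma.bnd_d} and Assumption~\ref{as:Hbounded} deliver, the remainder being the routine relaxed-Armijo backtracking estimate.
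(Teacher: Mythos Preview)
Your proposal is correct and follows essentially the same approach as the paper: both derive the Taylor-type estimate \eqref{eq.dummy_taylor}, bound the Armijo violation by a quadratic in $\alpha$ with uniformly bounded coefficients (via Lemmas~\ref{lemma.lb_on_x_bnd_const} and \ref{lemma.bnd_d}) minus the positive constant $\eps_R-2\eps_f$, and conclude by the standard backtracking argument. The only cosmetic difference is that the paper phrases the last step as locating the positive root of the quadratic $\phi(\alpha)=a\alpha^2+b\alpha-(\eps_R-2\eps_f)$ and invokes a monotonicity lemma to argue the root is uniformly bounded below, whereas you simply pick $\bar\alpha^\mu$ small enough directly.
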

			\begin{proof}
				Let $k\in\NN$.  As in the proof of Lemma~\ref{lemma.non-noisy-bnd-iterations} we can derive \eqref{eq.dummy_taylor} where $\breve G_k^{\mu} = \breve H^\mu_k + \tfrac{2\mu}{1 - \tau} (X_k^\mu)^{-2} $.  Then, using Assumption~\ref{assumption.bnd_err}, \eqref{eq.dummy_taylor}, and Lemma~\ref{lemma.lb_on_x_bnd_const}, we have 
				\begin{align}
					\label{eq.tmp3}
					\begin{split}
						&\ \tilde \varphi^{\mu}(\xk + \alpha_k d_k) -  \tilde \varphi^{\mu}(\xk)  -  \nu \alpha_k  (\nabla\tilde \varphi^{\mu}_k)^T d_k- \eps_R \\  \leq & \ \varphi^{\mu}(\xk + \alpha_k d_k) -  \varphi^{\mu}(\xk)  
						-  \nu \alpha_k (\nabla \varphi^{\mu}_k)^T  d_k
						+ \nu \alpha_k \|d_k\| \eps_g + 2 \eps_f - \eps_R \\
						\leq & \  (1 - \nu)\alpha_k (\nabla \varphi^{\mu}_k)^T  d_k+ \tfrac{\alpha_k^2}{2} d_k^T \breve G_k^{\mu} d_k +\nu  \alpha_k\|d_k\| \eps_g + 2 \eps_f - \eps_R \\
						\leq & \;  (1 - \nu)\alpha_k \|\nabla \varphi^{\mu}_k\| \|d_k\| + \tfrac{\alpha_k^2}{2} \left(L_g + \tfrac{2 \mu}{(1 - \tau) \delta_x^2} \right) \|d_k\|^2  +\nu \alpha_k \|d_k\| \eps_g + 2 \eps_f - \eps_R.     
					\end{split}
				\end{align}
				To examine the right-hand side, we define the convex quadratic function $\phi(\alpha) = a \alpha^2 + b \alpha - c$ with
				\begin{align*}
					a :=&\; \tfrac{1}{2}\left(L_g + \tfrac{2 \mu}{(1 - \tau) \delta_x^2} \right) \|d_k\|^2  > 0, \\
					b :=&\; \left((1 - \nu)  \|\nabla \varphi^{\mu}_k\|  +\nu \eps_g \right) \|d_k\| > 0,\\
					c:=&\; \eps_R - 2 \eps_f > 0,
				\end{align*}
				and let $\bar\alpha_k>0$ be the positive root of $\phi(\alpha)$.  Then by \eqref{eq.tmp3}, the Armijo condition \eqref{eq.relaxed_armijo_bnd} is satisfied when $\phi(\alpha_k)\leq0$, which is the case when $\alpha_k\in(0,\bar\alpha_k]$.  From Steps~\ref{s:armijo_begin}--\ref{s:armijo_end} we can then conclude that $\alpha_k\geq\bar\alpha_k/2$.  Because $a$ and $b$ are uniformly bounded by Lemmas~\ref{lemma.lb_on_x_bnd_const} and \ref{lemma.bnd_d} for all $k$, $\bar\alpha_k$ is uniformly bounded away from 0 (see Lemma~\ref{lemma.unif_lb_on_alpha} in \appB).  It follows that $\alpha_k\geq\inf_{k'\in\NN}\bar\alpha_{k'}=:\bar\alpha^{\mu}>0$.
			\end{proof}

		We are now ready to state the main theorem of this section.
		\begin{theorem}
			\label{thm.neigh2}
			Let $\bar\mu$ be the threshold from Lemma~\ref{lemma.H_inv_bound}, and suppose $\bar\delta\in(0,1)$ satisfies \eqref{eq:bardelta0}.
			Further suppose that $\eps_g$ and $\eps_H$, in addition to \eqref{eq:epsHstrict}, satisfy
			\begin{align}
				\label{eq.err_condition_thm}
				\begin{split}
					\eps_H < \tfrac{1}{M_1},  \ \ \text{and} \ \
					\eps_g \leq  \tfrac{ (1 - M_1 \eps_H)^2}{4 M_0 M_2},
				\end{split}
			\end{align}
			where $M_0$, $M_1$, and $M_2$ are defined in \eqref{eq:Mconstants} (and depend on $\bar\delta$). 
			Further, let
			\begin{align}
				\label{eq.chi1_chi2}
				\begin{split}
					\delta^+, \delta^- =  \tfrac{(1 - M_1 \eps_H) \pm \sqrt{\Delta}}{2 M_2}, \ \ \text{with} \ \   \Delta = (1 - M_1 \eps_H)^2 - 4 M_0 M_2 \eps_g,
				\end{split} 
			\end{align}
			and $\delta_1 = \delta^-$
			and $\delta_2=\min\{\delta^+,\bar\delta\}$ and assume $\delta_1\leq\bar\delta$.
			Then, for all $\mu\in(0,\bar\mu)$,
			\begin{enumerate}[label=(\roman*)]
				\item \label{c.1} $\xk \in B_{\delta_2}(x_*^{\mu}) \setminus \bar B_{\delta_1}(x_*^{\mu})$ implies $\|x_{k+1}^\mu - x_*^{\mu}\|_{(X_*^{\mu})^{-2}}  < \|\xk - x_*^{\mu}\|_{(X_*^{\mu})^{-2}} $;
				
				\item \label{c.2} $\xk\in \bar B_{\delta_1}(x_*^{\mu})$ implies $x_{k+1}^\mu \in \bar  B_{\delta_1}(x_*^{\mu})$; and
				
				\item \label{c.3} $\limsup_{k \to \infty} \|\xk -x_*^\mu\|_{(X_*^{\mu})^{-2}} \leq \delta_1$.
			\end{enumerate}
		\end{theorem}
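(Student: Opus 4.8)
The plan is to reduce the three claims to the scalar contraction estimate of Theorem~\ref{thm_main_neigh_1b} together with a sign analysis of the associated quadratic, plus one elementary device that transfers the estimate from the full Newton point $\xk+d_k$ to the damped iterate $\xkk=\xk+\alpha_kd_k$. Fix $\mu\in(0,\bar\mu)$. Since $\bar\mu\le\mu_2$, Lemma~\ref{lemma.G_unif_posdef_iter} guarantees $\sigma_{\min}(\tilde G_k^\mu)\ge\tilde\sigma_l^G$ whenever $\xk\in\Bbardelta$, so in that case the algorithm takes the noisy Newton step \eqref{eq.noisy_d_N} and Theorem~\ref{thm_main_neigh_1b} gives $e_k^{\mu,+}\le p(e_k^\mu)$ with $p(t):=M_2t^2+M_1\eps_H t+M_0\eps_g$ strictly increasing on $\RR_{\geq0}$. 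Set $q(t):=p(t)-t=M_2t^2-(1-M_1\eps_H)t+M_0\eps_g$; conditions \eqref{eq.err_condition_thm} are exactly what make $1-M_1\eps_H>0$ and $\Delta\ge0$, so $q$ has the nonnegative roots $\delta^-=\delta_1$ and $\delta^+$ of \eqref{eq.chi1_chi2}, with $q<0$ on $(\delta^-,\delta^+)$, $q\ge0$ off $[\delta^-,\delta^+]$, and $p(\delta^\pm)=\delta^\pm$. Finally, since $\xkk=(1-\alpha_k)\xk+\alpha_k(\xk+d_k)$ with $\alpha_k\in(0,1]$, the triangle inequality in $\|\cdot\|_{(X_*^\mu)^{-2}}$ gives, for $\xk\in\Bbardelta$,
\[
e_{k+1}^\mu\ \le\ (1-\alpha_k)e_k^\mu+\alpha_ke_k^{\mu,+}\ \le\ e_k^\mu+\alpha_k\,q(e_k^\mu).
\]

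Claims \ref{c.1} and \ref{c.2} then follow immediately; in both cases $\xk\in\Bbardelta$ because $\delta_1\le\bar\delta$ and $\delta_2=\min\{\delta^+,\bar\delta\}\le\bar\delta$, so the displayed bound is valid. For \ref{c.1}, $\xk\in B_{\delta_2}(x_*^\mu)\setminus\bar B_{\delta_1}(x_*^\mu)$ means $\delta_1<e_k^\mu<\delta_2\le\delta^+$, hence $q(e_k^\mu)<0$; since $\alpha_k>0$ we get $e_{k+1}^\mu<e_k^\mu$. For \ref{c.2}, $e_k^\mu\le\delta_1=\delta^-$ gives $e_k^{\mu,+}\le p(e_k^\mu)\le p(\delta^-)=\delta^-=\delta_1$ by monotonicity of $p$, so $e_{k+1}^\mu\le(1-\alpha_k)\delta_1+\alpha_k\delta_1=\delta_1$, i.e., $x_{k+1}^\mu\in\bar B_{\delta_1}(x_*^\mu)$.

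For \ref{c.3} I would argue from the first iterate $x_{k_0}^\mu$ lying in $B_{\delta_2}(x_*^\mu)$ (the ``neighborhood of confusion'', which by \ref{c.1}--\ref{c.2} is never left once entered). Induction with \ref{c.1} and \ref{c.2} yields a dichotomy: either some $x_{k_1}^\mu$ enters $\bar B_{\delta_1}(x_*^\mu)$, after which \ref{c.2} pins all subsequent iterates there and $\limsup_k e_k^\mu\le\delta_1$ follows; or $\{e_k^\mu\}_{k\ge k_0}$ stays in $(\delta_1,\delta_2)$, is strictly decreasing by \ref{c.1}, and converges to some $\ell\ge\delta_1$. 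In the latter case, if $\ell>\delta_1$ then $\{e_k^\mu\}_{k\ge k_0}\subset[\ell,e_{k_0}^\mu]\subset(\delta^-,\delta^+)$, a compact set on which $q$ attains a maximum $-c<0$; combining the displayed bound with the uniform step-size lower bound $\alpha_k\ge\bar\alpha^\mu/2>0$ of Lemma~\ref{lem:alpha_lb} gives $e_{k+1}^\mu\le e_k^\mu-\tfrac{\bar\alpha^\mu}{2}c$ for all $k\ge k_0$, impossible since $e_k^\mu\ge0$. Hence $\ell=\delta_1$, and $\limsup_k e_k^\mu\le\delta_1$ again.

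The substantive analytic work is packaged in Theorem~\ref{thm_main_neigh_1b}, which is already established; within the present argument the one genuine obstacle is reconciling that estimate --- stated for the undamped Newton point --- with the iterate actually produced under the fraction-to-the-boundary and relaxed Armijo rules, where $\alpha_k$ can be strictly below one, typically for components converging to an active bound. The scaled-norm triangle inequality disposes of this cleanly, but it is also what makes the appeal to the uniform lower bound $\bar\alpha^\mu$ on the step size indispensable in \ref{c.3}: without it the guaranteed per-iteration decrease could collapse to zero and the sequence would not be forced toward $\bar B_{\delta_1}(x_*^\mu)$.
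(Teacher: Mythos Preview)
Your proposal is correct and follows essentially the same route as the paper: both reduce the claims to the scalar inequality $e_k^{\mu,+}\le p(e_k^\mu)$ from Theorem~\ref{thm_main_neigh_1b}, transfer it to the damped iterate via the convex-combination identity $e_{k+1}^\mu\le(1-\alpha_k)e_k^\mu+\alpha_k e_k^{\mu,+}$, and then do a sign analysis of $q(t)=p(t)-t$; for \ref{c.3} both invoke the uniform step-size lower bound of Lemma~\ref{lem:alpha_lb}. Your treatment of \ref{c.2} (using that $p$ is increasing on $\RR_{\ge0}$ and $p(\delta^-)=\delta^-$) is in fact cleaner than the paper's, and your limit/compactness packaging of \ref{c.3} is equivalent to the paper's $\eta$-perturbation of the roots; note that, like the paper, you tacitly assume some iterate lies in $B_{\delta_2}(x_*^\mu)$ to start the argument for \ref{c.3}.
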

		\begin{proof}
			Let $\mu\in(0,\mu_3]$ and let $\phi(e) = M_2\cdot e^2 + M_1\eps_H\cdot e +M_0\eps_g$. 
			The assumptions on $\eps_g$ and $\eps_H$ imply $\Delta,\delta^-,\delta^+ >0$.  If $\xk\in B_{\delta_2}(x_*^{\mu})$, then $e^{\mu}_k\leq \delta_2\leq\bar\delta$ and Theorem~\ref{thm_main_neigh_1b} implies $e_{k}^{\mu,+}\leq\phi(e^\mu_k)$.
			Hence
			\begin{equation}\label{eq:ekp1}
				\begin{split}
					e^\mu_{k+1} = &\; \|x_{k+1}^\mu-x_*^{\mu}\|_{(X_*^{\mu})^{-2}} =\|\xk + \alpha_kd_k-x_*^{\mu}\|_{(X_*^{\mu})^{-2}} \\ = &\;  \|(1-\alpha_k)(\xk - x^\mu_*)  + \alpha_k(\xk + d_k-x_*^{\mu})\|_{(X_*^{\mu})^{-2}} \\ 
					\leq &\; (1-\alpha_k)e^\mu_k + \alpha_k e_k^{\mu,+} \leq e^\mu_k + \alpha_k (\phi(e^\mu_k)-e^\mu_k ).
				\end{split}
			\end{equation}
			Since $\delta^-$ and $\delta^+$ are the roots of $\psi(\delta)=\phi(\delta)-\delta$ and $\psi$ is a strictly convex quadratic function, we have that $\psi(e^\mu_k)<0$ when $\delta^-<e^\mu_k<\min\{\delta^+,\bar\delta\}=\delta_2$, and consequently $\phi(e^\mu_k)=\psi(e^\mu_k)+e^\mu_k<e^\mu_k$.
			Since $\alpha_k>0$, \eqref{eq:ekp1} yields claim \ref{c.1}.
			%
			
			To show claim~\ref{c.2}, 
			we first prove that $M_0 \eps_g \leq \delta_1 = \delta^-$. By the definition of $\delta^-$ in \eqref{eq.chi1_chi2}, this is equivalent to showing $ (1 - M_1 \eps_H) - 2M_0 M_2 \eps_g\geq  \sqrt{\Delta}$.
			Observe that $(1 - M_1 \eps_H) - 2M_0 M_2 \eps_g$ is non-negative since $ \eps_g \leq \tfrac{(1 - M_1 \eps_H)^2}{4M_0 M_2} \leq \tfrac{1 - M_1 \eps_H}{2M_0 M_2} $ by \eqref{eq.err_condition_thm} and $(1-M_1 \eps_H)\in(0,1)$.   Thus, it is sufficient to show    $ \left((1 - M_1 \eps_H) - 2M_0 M_2 \eps_g \right)^2\geq  {\Delta}$. Expanding the quadratic on the left-hand side of this inequality and by the definition of $\Delta$ in \eqref{eq.chi1_chi2}, one has that this inequality trivially holds, as $M_0 M_2 \eps_g + M_1 \eps_H \geq 0$.
			
			Now consider $\xk\in \bar B_{\delta_1}(x_*^{\mu})$.
			Because $e_{k}\in[0,\delta_1]$, we have $\phi(e^\mu_k)\leq\phi(0)=M_0\eps_g\leq\delta_1$ since $\phi$ is monotonically decreasing over $[0,\delta^-]=[0,\delta_1]$ (recall $\delta_1=\delta^-$ by definition).  Together with \eqref{eq:ekp1}, this concludes the proof of claim \ref{c.2}.
		
				Finally, claim \ref{c.3} follows from part \ref{c.2} if $x_{k'}\in \bar B_{\delta_1}(x_*^{\mu})$ for some $k'$.
				For the other case, suppose $e_{k'}>\delta_1$ for all $k'$ and let $x_{k}\in B_{\delta_2}(x_*^{\mu})$.  Part \ref{c.1} then yields that $e_{k'+1}<e_{k'}$ for all $k'\geq k$.
				Choose an arbitrary $\eta\in(0,-\psi(e_{k}))$ and recall that $\delta_1=\delta^-$ and $\psi(e_{k})<0$ since $e_{k}\in(\delta^-,\delta^+)$.
				Let $\delta^-_{\eta}$and $\delta^+_{\eta}$ be the roots of $\psi(\delta)+\eta=0$. Then $\psi(\delta)+\eta<0$ for all $\delta\in(\delta^-_{\eta},\delta^+_{\eta})$.  Consequently, as long as $e^\mu_k>\delta_\eta^-$, we have $\phi(e^\mu_k)=\psi(e^\mu_k) + e^\mu_k < e^\mu_k-\eta$.  Substituting this into \eqref{eq:ekp1}, we have
				\[ e_{k+1} < (1-\alpha_k)e^\mu_k + \alpha_k(e^\mu_k-\eta)=e^\mu_k-\alpha_k\eta\leq e^\mu_k-\bar \alpha^\mu\eta,
				\]
				where $\bar \alpha^\mu>0$ is the constant defined in Lemma~\ref{lem:alpha_lb}.
				This shows that the error is decreased by at least $\bar\alpha^\mu\eta$ in each iteration as long as $e_{k}>\delta^-_\eta$.  This, combined with the fact that $e_{k'+1}<e_{k'}$ for all $k'\geq k$, yields that $e_{k'}\leq\delta^-_\eta$ for all $k'$ sufficiently large.
				To finish the proof note that the conclusion above holds for any $\eta\in(0,-\psi(e_{k}))$, and since $\delta^-_\eta\to\delta^-$ as $\eta\to0$, we obtain the desired result.    
			\end{proof}
			
			\begin{remark}\label{rem:interpretation}
				The interpretation of this result is that, if the noise levels are sufficiently small, there exists a neighborhood $B_{\delta_1}(x_*^\mu)$ around the solution from which the iterates cannot escape once they are inside.  Further, there is a larger neighborhood of attraction, $B_{\delta_2}(x_*^\mu)$, with the property that once iterates fall into this neighborhood, they converge monotonically towards $B_{\delta_1}(x_*^\mu)$ or enter $B_{\delta_1}(x_*^\mu)$.
				The condition on $\epsilon_g$ in \eqref{eq.err_condition_thm} shows a trade-off between the permissible noise in the gradient and the Hessian, where $\eps_g$ needs to be decreased if $\eps_H$ increases (see Figure~\ref{eq.ub_lb_chi_bar} below).
			\end{remark}

			\begin{remark}\label{rem:extreme_delta}
				Theorem~\ref{thm.neigh2} is valid for an entire range of $\bar\delta$.  
				Notice that $M_0$, $M_1$, and $M_2$ and consequently $\delta_1$ and $\delta_2$ depend on $\bar\delta$.
				Therefore, we can derive the largest ``attraction-neighborhood'' $\bar B_{\delta_2^{\max}}(x_*^{\mu})$ from Theorem~\ref{thm.neigh2} by looking for the largest value $\delta_2^{\max}$ of $\delta_2$ among those for which $\bar\delta$ satisfies the assumptions of the theorem.
				Similarly, defining $\delta_1^{\min}$ as the smallest possible value of $\delta_1$, we can find the smallest ``containment-neighborhood'' $\bar B_{\delta^{\min}_1}(x_*^{\mu})$.  We discuss an example of this in Section~\ref{sec:choicedeltabar}.
			\end{remark}
			
			The following is the key observation of this section.
			\begin{remark}[Active-set identification]\label{rem:activeset}
				In the non-noisy case ($\eps_f=\eps_g=\eps_H=0$), the interior-point framework obtains a solution of the original problem \eqref{eq.orig_bnd_const_prob} by approximately solving barrier problems \eqref{eq.barrier_bnd_const} for a decreasing sequence $\{\mu_\ell\}$ of barrier parameters with $\mu_\ell\to0$. Then, for $i\in\AAA$, the iterates $x^{\mu_\ell}_{k,i}$ converge to zero when $\mu_\ell\to0$ and Algorithm~\ref{alg.bnd_const_fixed_mu} is executed for sufficiently many iterations for each fixed value of $\mu_\ell$.  In other words, the interior-point method is able to identify which bounds are active at the optimal solution.  One may suspect that this is not the case when $\eps_f,\eps_g,\eps_H>0$ because the iterates $x^{\mu_\ell}_{k,i}$ might keep changing significantly even for small $\mu_\ell$ because of the noise.  However, if for every $\mu_\ell$, for which Algorithm~\ref{alg.bnd_const_fixed_mu} is executed, eventually $x_{{k}}^{\mu_\ell}\in B_{\delta_2}(x_*^{\mu_\ell})$ for a sufficiently large $k$, then
				$
				\tfrac{|x^{\mu_\ell}_{{k},i}-x^\mu_{*,i}|}{x^{\mu_\ell}_{*,i} } \leq \|(X_*^{\mu_\ell})^{-1}(x^{\mu_\ell}_{{k}}-x^{\mu_\ell}_{*})\| = \|x^{\mu_\ell}_{{k}}-x^{\mu_\ell}_{*}\|_{(X_*^{\mu_\ell})^{-2}} < \delta_2,
				$
				or equivalently $|x^{\mu_\ell}_{k,i}-x^{\mu_\ell}_{*,i}| < \delta_2 x^{\mu_\ell}_{*,i}$ for all $i \in \AAA $ and all large $k$.
				Since $\lim_{\mu\to0}x^{\mu}_{*,i}=0$ this implies that, as in the non-noisy case, $x^{\mu_\ell}_{k,i}$ converge to zero when $\mu_\ell\to0$. In other words, the algorithm is able to identify the set of active constraints even in the presence of noise; see also Figure~\ref{fig.neigh_x_y}.
			\end{remark}

			\subsection{Illustrative example}\label{sec:choicedeltabar}
	To illustrate the implications of Remark~\ref{rem:extreme_delta}, consider the optimization problem 
			\begin{equation*}
				\min_{x \in \RR^n}\ \tfrac{1}{2}\left(x_1 - 1\right)^2 + x_2\ \ \st \ \ x \geq 0.
			\end{equation*}
			The primal-dual path, in terms of $\mu \in [0, 1]$, is
			$
			x_*^{\mu } = \left(
			\tfrac{1 + \sqrt{1 + 4 \mu}}{2},  \, \mu\right)^T
			\text{ and }
			z_*^{\mu } = \left(
			\tfrac{2\mu}{1 + \sqrt{1 + 4 \mu}}, \, 1\right)^T.
			$
			Observe that $\|z_*^{\mu}\|_{\infty} = 1$. In addition,  $g(x) =
			(    x_1 -1, 
			1)^T
			$
			and 
			$H(x) = \begin{bmatrix}
				1 & 0 \\ 
				0 & 0
			\end{bmatrix}$ yield $L_g = 1$ and $L_H = 0$, respectively.
			Also, 
			\begin{align*}
				G_k^\mu X_*^\mu & = \begin{bmatrix}
					\left(1 + \frac{\mu}{(x_{k, 1}^\mu)^2}\right)x_{*, 1}^{\mu} & 0 \\
					0 & \frac{\mu}{(x_{k, 2}^\mu)^2} x_{*, 2}^\mu
				\end{bmatrix}, \text{ and }\\
				\|  (G_k^\mu X_*^\mu)^{-1}\| & = \max \left\{{\left(x_{*, 1}^{\mu} + \tfrac{\mu}{x_{*, 1}^{\mu}}\left(\tfrac{x_{*, 1}^{\mu}}{x_{k, 1}^\mu}\right)^2\right)^{-1}}, \tfrac{x_{*, 2}^{\mu}}{\mu}\left(\tfrac{x_{k, 2}^\mu}{x_{*, 2}^{\mu}}\right)^2
				\right\}.
			\end{align*}
			Since $x_{*, 1}^{\mu} > 1$, the first term in the $\max$-term is less than $1$.
			Also, by Lemma~\ref{lemma.DX_bnds}, one has that $\left(\nicefrac{x_{k, 2}^\mu}{x_{*, 2}^{\mu}}\right)^2 \leq (1 + e^\mu_k)^2$. This, along with $\frac{x_{*, 2}^{\mu}}{\mu} = 1$, implies that $  \|  (G_k^\mu X_*^\mu)^{-1}\| \leq (1 + e^\mu_k)^2 \leq (1 + \bar \delta)^2$. Replacing $4\|(\Gamma_*^0)^{-1}\|$ with $ \|  (G_k^\mu X_*^\mu)^{-1}\|$ in \eqref{eq:Mconstants} yields a tighter bound on $e_k^+$ and the constants $M_2$, $M_1$, and $M_0$ for this example are 
			\begin{align*}
				M_{2} = \left( \tfrac{1 + \bar\delta}{1 - \bar\delta} \right)^2, \ \ 
				M_{1} = 2 (1 + \bar\delta)^4 \left(\tfrac{1 + \sqrt{1 + 4 \mu}}{2}\right)  \left(\left(\tfrac{1 + \sqrt{1 + 4 \mu}}{2}\right) + \tfrac{1}{1 -\bar \delta} \right), \ \ 
				M_{0} =  2 (1 + \bar\delta)^2.
			\end{align*}
			Now consider $\eps_g = 0.02$ and $\eps_H = 0.01$ with $\mu = 10^{-6}$.
			Figure~\ref{fig.ub_lb_delta_bar} shows $\delta^-$ and $\delta^+$ as functions of $\bar \delta$. For visualization purposes, we also include a plot of the \emph{identity line} (with a slope of one), denoted by $\bar \delta$ in the figure legend.
			Note that $\delta^-$ and $\delta^+$ are monotonically increasing and decreasing, respectively, as $\bar\delta$ increases.
			Consequently, the radii in Remark~\ref{rem:extreme_delta} are $\delta_1^{\min} \approx 0.05$ and $\delta_2^{\max} \approx 0.24$, computed as the roots of $\delta^- = \bar \delta$ and $\delta^+ = \bar \delta$, respectively.

			Figure~\ref{fig.neigh_x_y} shows the neighborhood $\bar B_{\delta^{\max}_2}(x_*^{\mu})$ in the $(x_1, x_2)$-space and is a visualization for Remark~\ref{rem:activeset}.  Observe that the neighborhood is much narrower in the $x_2$- than in the $x_1$-direction. In addition, as $\mu$ decreases the neighborhood becomes even narrower in the $x_2$-direction. This implies that late iterates $x_{k, 2}^{\mu}$ remain closer to $x_{*, 2}^0=0$ as $\mu$ decreases despite the presence of noise in evaluating the objective function and its gradient. This observation suggests that decreasing the barrier parameter $\mu$ beyond some potential noise-induced threshold is beneficial in obtaining a more accurate solution and identifying the active constraints.
			In Table~\ref{tb.active_test_ndeg} below we see that this is observed also when the algorithm is applied to general problem instances.
			
			Finally, we want to show that Theorem~\ref{thm.neigh2} applies to a wide range of noise thresholds $\eps_H$ and $\eps_g$.
			Figure~\ref{eq.ub_lb_chi_bar} depicts a plot of $\delta_1^{\min}$ and $\delta_2^{\max}$ as a function of $\eps_g$ and $\eps_H$ for $\mu = 10^{-6}$. 
			The domain in Figure~\ref{subfiga} and \ref{subfigb} corresponds to the pairs $(\eps_g,\eps_H)$ for which the assumptions of Theorem~\ref{thm.neigh2} are satisfied, i.e., $\Delta\geq0$ in \eqref{eq.chi1_chi2}.
			As discussed in Remark~\ref{rem:interpretation}, \eqref{eq.err_condition_thm} indicates a trade-off in the permissible values of $\eps_H$ and $\eps_g$: When $\eps_g$ is large, $\eps_H$ needs to be small, and vice versa. 
			Also, observe that $\delta_2^{\max}$ increases and $\delta_1^{\min}$ decreases as $\eps_g$ and $\eps_H$ decrease. In particular, $\eps_g = \eps_H = 0$ yields $\delta_{1}^{\min} = 0$, i.e., $\bar B_{\delta^{\min}_1}(x_*^{\mu}) =\{x_*^{\mu}\}$, which implies that $\xk \to x_*^{\mu}$ as $k \to \infty$ by part \ref{c.3} in Theorem~\ref{thm_main_neigh_1b}, as one expects in the non-noisy case.\

			\begin{figure}
				\centering
				\begin{minipage}{.5\textwidth}
					\centering
					\includegraphics[trim={0cm 0cm 0cm 0.0cm},clip, width=\textwidth]{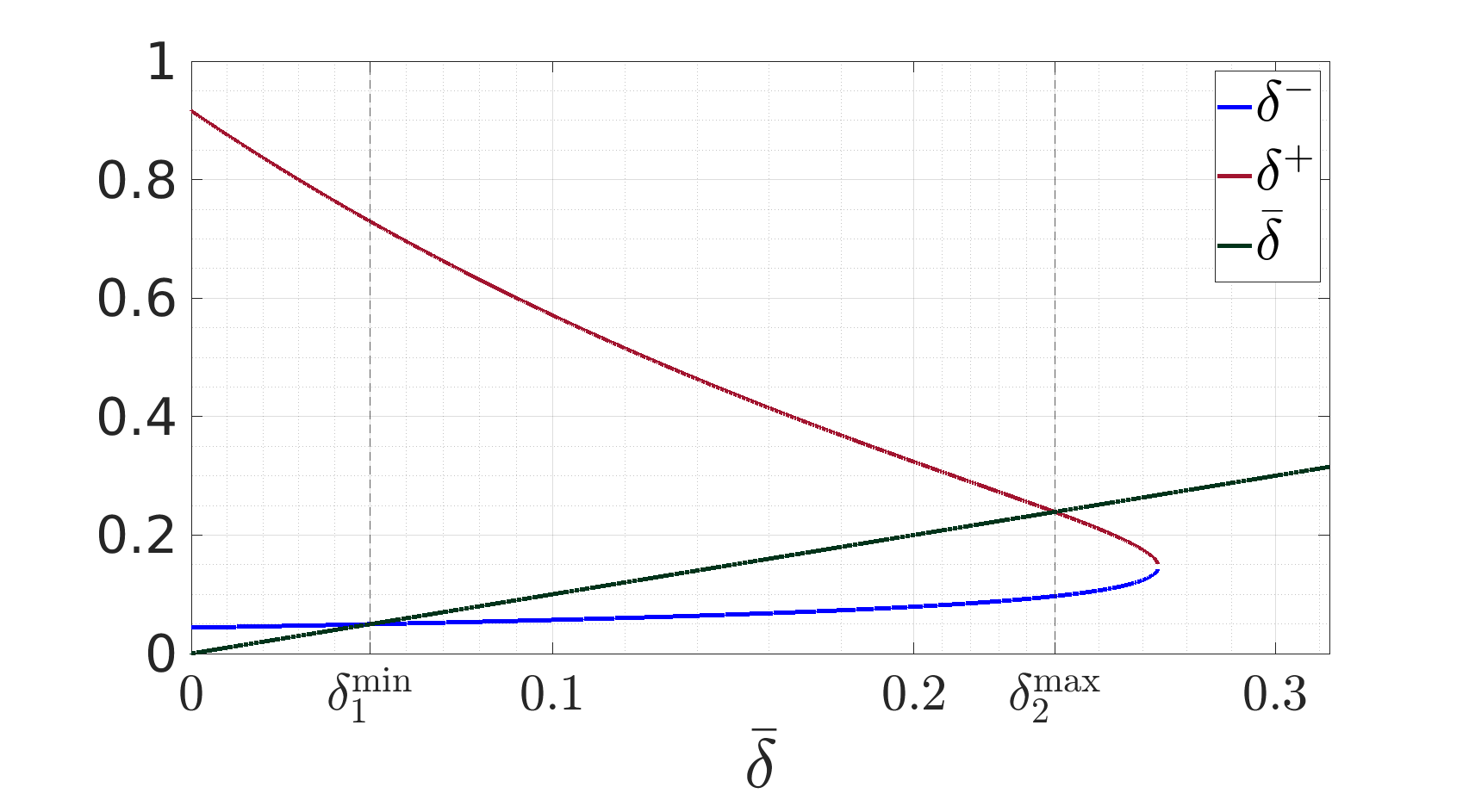}
					\caption{$\delta^-$ and $\delta^+$ as a function of $\bar \delta$.}
					\label{fig.ub_lb_delta_bar}
				\end{minipage}%
				\begin{minipage}{.5\textwidth}
					\centering
					\includegraphics[trim={0cm 0cm 2cm 0cm},clip, width=\textwidth]{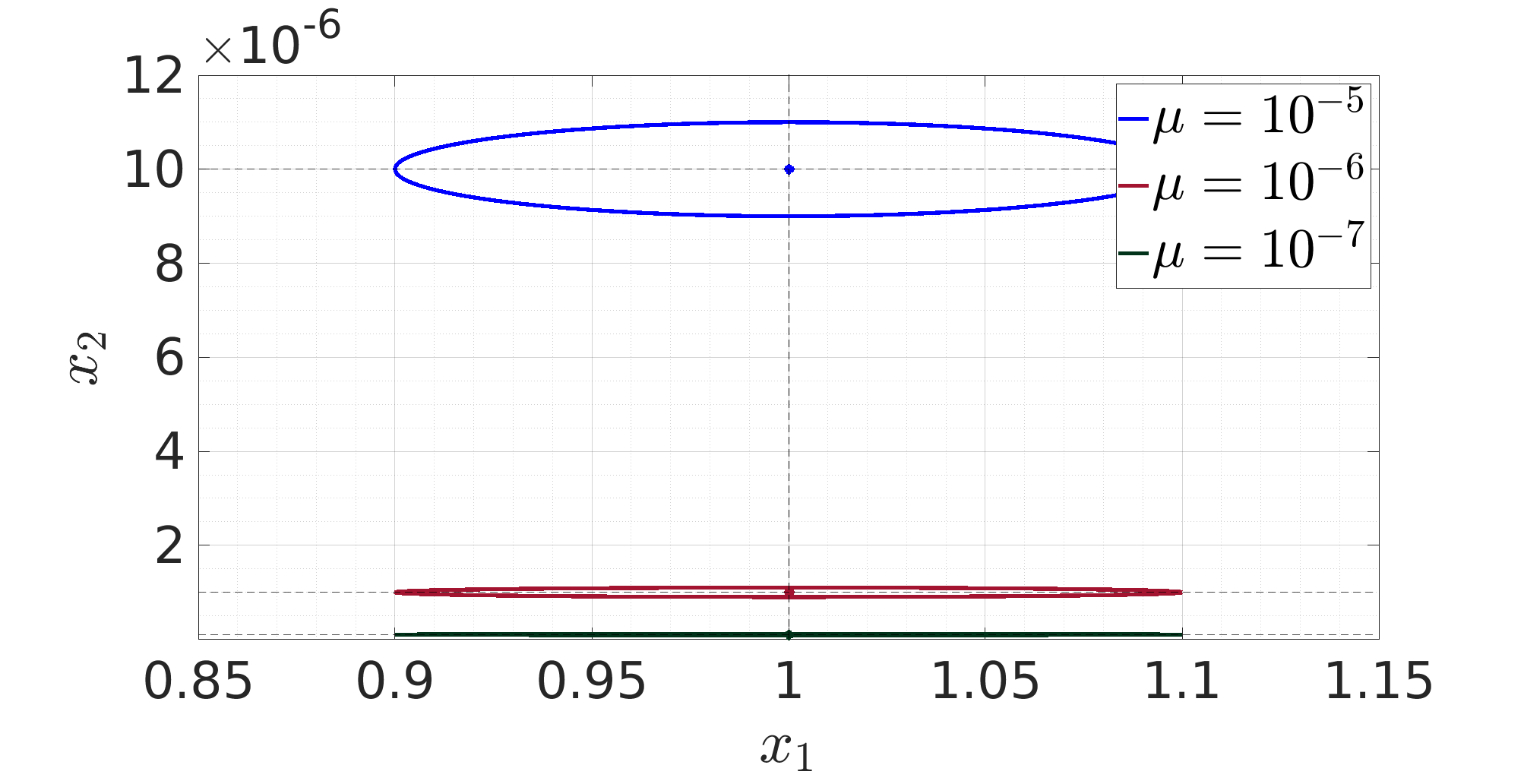}	
					\caption{Illustration of $\bar B_{\delta_2}(x_*^{\mu})$.}
					\label{fig.neigh_x_y}
				\end{minipage}
			\end{figure}

			\begin{figure}\centering
				\subfloat[Color map of $\delta_1^{\min}$]{\label{subfiga}\includegraphics[trim={.5cm 0cm .5cm 0cm},clip, width=.29\linewidth]{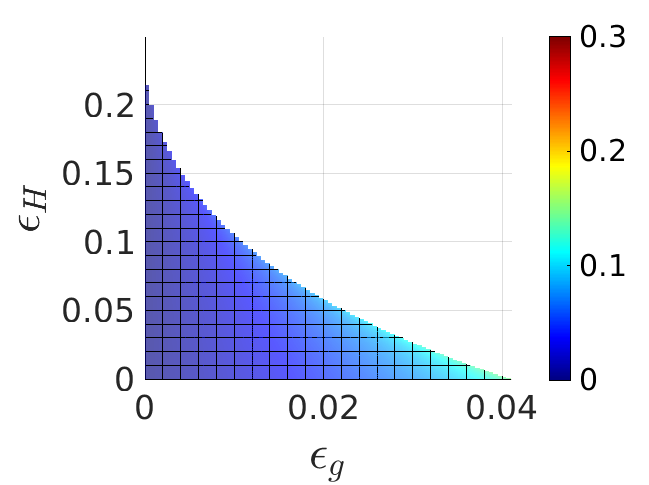}}\hfill
				\subfloat[Color map of $\delta_2^{\max}$]{\label{subfigb}\includegraphics[trim={.5cm 0cm .5cm 0cm},clip, width=.29\linewidth]{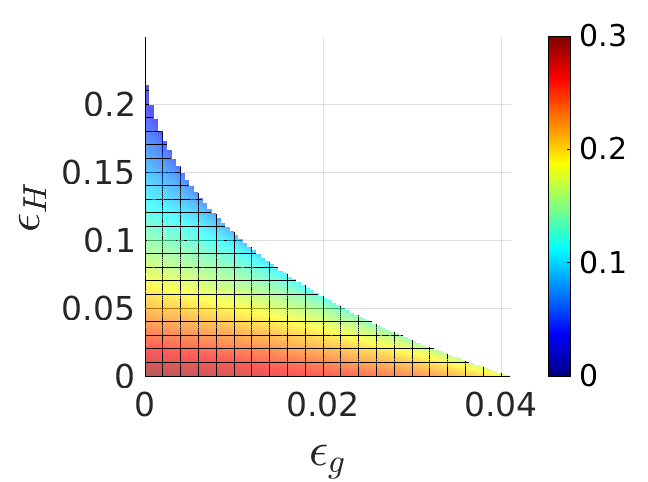}}\hfill
				\subfloat[3-dim view]{\label{c}\includegraphics[trim={0cm .2cm .5cm 0cm},clip,width=.29\linewidth]{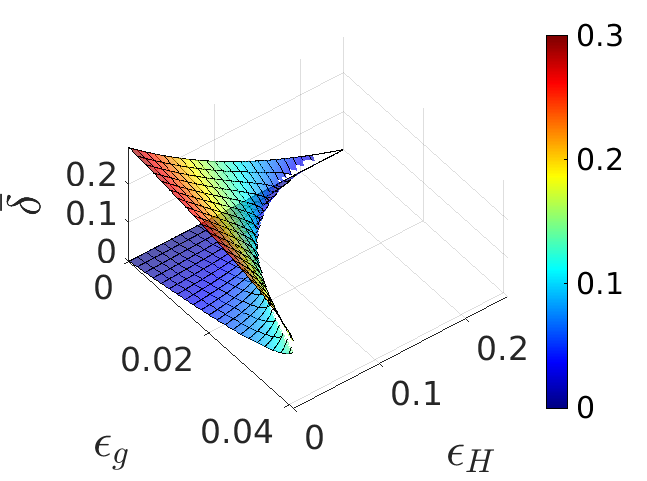}}
				\caption{Dependence of $\delta_1^{\min}$ and $\delta_2^{\max}$ as a function of $\eps_g$ and $\eps_H$.}
				\label{eq.ub_lb_chi_bar}
			\end{figure}

			\section{Numerical experiments}
			\label{sec.numerical_exp}
			In this section, we present numerical results that show that the theoretical findings in Sections~\ref{sec.global_conv} and \ref{sec.local_conv} can be observed in practice.
			
			We performed the experiments in Matlab 2020b with a primal-dual implementation of Algorithm~\ref{alg.bnd_const_fixed_mu} with $\nu=10^{-6}$ and $\eps_R=2.05\eps_f$. The primal-dual Hessian, based on \eqref{eq:G_primaldual}, is used in computing the search direction \eqref{eq.direction_bnd}. Algorithm~\ref{alg.bnd_const_update_barrier} in \appE \ is a formal description of the implemented method.
			We use the primal-dual variant because, as is well known, it significantly outperforms the primal method in terms of the number of iterations needed to converge in the non-noisy case.  In preliminary tests, we observed the same advantage in the presence of noise.  As discussed
			in Section~\ref{sec.global_conv}, the global convergence results still hold for this variant.

			To handle nonconvexity, we follow an approach similar to that in \cite{nocedal1999numerical} and choose $\hat G_k^\mu=\hat H_k^\mu + Z_k^\mu (X_k^\mu)^{-1}+\lam I$ for some $\lam\geq0$ to make sure that the positive-definite condition \eqref{eq:sigma_l_G} holds.
			We observed in our experiments that for small values of $\mu$, the regularization eventually chooses $\lam=0$ also for nonconvex instances, providing evidence for our theoretical result in Lemma~\ref{lemma.G_unif_posdef_iter}.

			The problems considered in this section are selected from the CUTE (Constrained and Unconstrained Testing Environment) problem collection \cite{bongartz1995cute}. 
			We chose the bound-constrained instances for which the AMPL models are available \cite{vanderbei_ampl}\footnote{We divided the objective functions in problems \texttt{ncvxbqp*} by $n$.}. 
			We omitted problems in which no constraint was active at the optimal solution, as they are not relevant to verifying the theoretical results of identifying active constraints in Section~\ref{sec.local_conv}.  The sizes of the problems vary between $n=2$ and $n=1000$ variables.

			In all experiments, we added Bernoulli-distributed noise with parameter $0.5$ and magnitude $\eps_f$ to the deterministic objective function, and the added noise in the gradient is uniformly distributed on the surface of an n-dimensional ball with radius $\eps_g$ around the deterministic gradient. We further perturb the diagonal elements of the Hessian with a Bernoulli-distributed noise with parameter $0.5$ and magnitude $\eps_H$.

			\subsection{Termination test}
			We begin by studying the effectiveness of the stopping test in Theorem~\ref{thm.practical_termination_v2}.  We ran the algorithm for 1000 iterations and recorded the first iteration $k=\texttt{ter}$ in which condition \ref{it.stop1_v2} or \ref{it.stop2_v2} in Theorem~\ref{thm.practical_termination_v2} holds.
			The results are given in Table~\ref{tb.stop_test_1}, for $\mu = 10^{-1}$ and noise levels $\eps_f = 10^{-2}$ and $\eps_g = \eps_H = 10^{-1}$.  (We choose the derivative noise $\eps_g$ and $\eps_H$ as $\sqrt{\eps_f}$ to imitate the errors one might obtain if the derivatives are approximated by finite differences using noisy 
			function values.)
			
			First, we note that the algorithm is able to solve all instances.  The termination test is triggered often within a number of iterations that is very close to the iteration count for a noise-less version of the instance (solved to a tight tolerance of $10^{-6}$).
			This indicates that the termination test is capable of detecting when the algorithm enters a regime in which steps start to be dominated by noise.
			%
			Furthermore, by comparing the number of iterations with the number of function evaluations, we see that in almost all instances the first trial step size ($\alpha_k=\alpha_k^{\max}$) is accepted in the first \texttt{ter} iterations.
			
			Comparing the unscaled optimality measures $\|\na\tilde\varphi_0^\mu\|$ and $\|\na\tilde\varphi_{\texttt{ter}}^\mu\|$, we see that the method made significant progress from the starting points until the termination tests were satisfied.  The column $\|\na\tilde\varphi^\mu\|^\texttt{av}$ gives an idea of the best possible outcome if we executed the method beyond the termination test.  Since $\eps_g=10^{-1}$, we expect these values to be around $10^{-1}$ as well.
			
			Considering $\|\na\tilde\varphi_\texttt{ter}^\mu\|_{(\hat G^{\mu}_\texttt{ter})^{^{-1}}}$ and $\|\na\tilde\varphi^\mu\|^{\texttt{av}}_{(\hat G^{\mu})^{^{-1}}}$, which report the scaled optimality measures used in the termination tests, we see that in most instances the termination test was triggered when the optimality measure $\|\na\tilde\varphi_k^\mu\|_{(\hat G^{\mu}_k)^{^{-1}}}$ was within a factor of at most 10 of the best possible outcome.
			The last two columns give the two quantities in the termination test in Theorem~\ref{thm.practical_termination_v2}.  We can see that in all but one instance the strategy described in Remark~\ref{rem:balanceT} was able to balance the two terms, thereby tightening the tolerances and delaying a potentially too early termination.
			Since the derivation of the termination test relies on some conservative inequalities, we cannot expect that it can predict the smallest achievable $\|\na\tilde\varphi^\mu\|^{\texttt{av}}_{(\hat G^{\mu})^{^{-1}}}$ accurately, so in a practical setting, one may want to run the algorithm a few additional iterations after the test has been triggered.
			An experiment with smaller noise levels, reported in Table~\ref{tb.stop_test_3} in \appC,
			shows similar observations.

			Overall, this experiment indicates that the termination test seems to be able to find a compromise between wasting unproductive iterations and taking sufficiently many iterations to reduce the optimality measure close to the best possible value.

			\begin{table}
				\caption[]{
					Performance of the stopping test with $\mu = 10^{-1}$, $\eps_f = 10^{-2}$, $\eps_g = \eps_H = 10^{-1}$. \texttt{det}: number of iterations needed for noiseless instance with tolerance $10^{-6}$; \texttt{\#f}: total number of function evaluations until iteration $k=\texttt{ter}$; 
					${\|\nabla \tilde \varphi_{\texttt{ter}}^\mu\|}^{}_{\hat G^{-1}}={\|\nabla \tilde \varphi_{\texttt{ter}}^\mu\|}^{}_{(\hat G_{\texttt{ter}}^{\mu})^{^{-1}}}$;
					$\|\nabla \tilde \varphi^\mu \|^{\texttt{av}}$: geometric mean of $\|\nabla \tilde \varphi_k^\mu \|$ over the last 10 iterations, (i.e., $k = 991, \dots, 1000$)  of the run;
					${\|\nabla \tilde \varphi\|}^{\texttt{av}}_{\hat G^{-1}} $ is defined analogously for ${\|\nabla \tilde \varphi_{\texttt{ter}}^\mu\|}^{}_{\hat G^{-1}}$.
				}
				\label{tb.stop_test_1}
				\centering
				\texttt{
					\resizebox{1\textwidth}{!}{%
						\begin{tabular}{|p{1.5cm}||r|r||r|r||p{1.5cm}|p{1.5cm}|p{1.6cm}||c|c||p{1.5cm}|p{1.5cm}| }
							\hline
							\rule{0pt}{15pt}
							Problem  &  \multicolumn{1}{c|}{$n$} &  \multicolumn{1}{c||}{det} &  \multicolumn{1}{c|}{ter} & \multicolumn{1}{c||}{\#f } & \multicolumn{1}{c|}{$ \|\nabla \tilde \varphi_0^\mu\|$} & \multicolumn{1}{c|}{$ \|\nabla \tilde  \varphi_{\texttt{ter}}^\mu\|$} &\multicolumn{1}{c||}{
								${\|\nabla \tilde \varphi^\mu\|^{\texttt{av}}}$
							} &   \multicolumn{1}{c|}{ ${\|\nabla \tilde \varphi_{\texttt{ter}}^\mu\|}^{}_{\hat G^{-1}}$
							} &\multicolumn{1}{c||}{ ${{ \|\nabla \tilde \varphi^\mu\|}^{\texttt{av}}_{\hat G^{-1}}} $
							} & \multicolumn{1}{c|}{$T_{1,\texttt{ter}}$} & \multicolumn{1}{c|}{$T_{2,\texttt{ter}}$} 
							\\[5pt]
							\hline
							biggsb1 & 1000 & 7 & 5 & 6 & 3.14e+01 & 1.50e-01 & 1.42e-01 & 8.83e-02 & 8.22e-02 & 4.05e-01 & 4.05e-01 \\
							chenhark & 1000 & 16 & 19 & 20 & 1.57e+01 & 6.40e-01 & 1.59e-01 & 1.68e+00 & 2.42e-01 & 1.81e+00 & 1.81e+00 \\
							cvxbqp1 & 100 & 8 & 6 & 7 & 2.57e+03 & 2.43e+00 & 1.45e-01 & 2.12e-02 & 3.14e-03 & 3.04e-01 & 3.04e-01 \\
							eg1 & 3 & 7 & 5 & 7 & 2.26e+00 & 5.55e-01 & 1.16e-01 & 2.32e-01 & 4.68e-02 & 3.19e-01 & 3.77e-01 \\
							eigena & 110 & 21 & 12 & 14 & 7.28e+01 & 7.31e+00 & 2.64e-01 & 7.47e-01 & 9.83e-02 & 1.12e+00 & 1.12e+00 \\
							explin & 120 & 18 & 16 & 17 & 7.65e+03 & 4.05e-01 & 1.40e-01 & 2.76e-02 & 4.57e-03 & 3.08e-01 & 3.08e-01 \\
							explin2 & 120 & 18 & 18 & 19 & 7.65e+03 & 1.44e-01 & 1.40e-01 & 3.80e-03 & 3.13e-03 & 3.02e-01 & 3.02e-01 \\
							expquad & 120 & 17 & 16 & 17 & 7.64e+03 & 3.50e+00 & 1.40e-01 & 3.05e-01 & 4.72e-02 & 3.23e-01 & 3.23e-01 \\
							harkerp2 & 100 & 10 & 23 & 24 & 9.36e+06 & 4.32e+01 & 1.45e-01 & 2.57e-01 & 1.14e-02 & 3.77e-01 & 3.77e-01 \\
							mccormck & 1000 & 14 & 7 & 8 & 9.38e+01 & 2.88e-01 & 1.42e-01 & 1.24e-01 & 5.87e-02 & 3.54e-01 & 3.54e-01 \\
							mdhole & 2 & 33 & 15 & 23 & 2.75e+03 & 8.59e-01 & 9.53e-02 & 5.42e-01 & 2.13e-02 & 6.43e-01 & 6.43e-01 \\
							ncvxbqp1 & 1000 & 52 & 55 & 57 & 8.15e+01 & 3.35e+00 & 1.43e-01 & 2.49e-01 & 4.91e-02 & 2.13e+00 & 2.13e+00 \\
							ncvxbqp2 & 1000 & 108 & 103 & 104 & 6.01e+01 & 3.34e-01 & 1.48e-01 & 3.26e-01 & 7.68e-02 & 1.09e+00 & 1.09e+00 \\
							ncvxbqp3 & 1000 & 151 & 146 & 148 & 4.83e+01 & 1.31e+01 & 1.40e-01 & 8.83e-01 & 6.84e-02 & 2.49e+00 & 2.49e+00 \\
							nonscomp & 1000 & 25 & 7 & 8 & 7.59e+03 & 5.65e-01 & 1.85e-01 & 8.77e-02 & 3.07e-02 & 7.49e-01 & 7.49e-01 \\
							obstclal & 64 & 7 & 4 & 5 & 7.53e+00 & 1.99e-01 & 1.40e-01 & 1.58e-01 & 8.76e-02 & 4.67e-01 & 4.67e-01 \\
							obstclbl & 64 & 6 & 4 & 5 & 1.95e+02 & 5.14e-01 & 1.44e-01 & 5.90e-02 & 3.60e-02 & 3.55e-01 & 3.55e-01 \\
							obstclbu & 64 & 6 & 4 & 5 & 1.91e+02 & 3.79e-01 & 1.44e-01 & 7.10e-02 & 3.60e-02 & 3.55e-01 & 3.55e-01 \\
							pentdi & 1000 & 6 & 3 & 4 & 1.72e+01 & 1.14e+00 & 1.42e-01 & 1.58e-01 & 3.06e-02 & 3.16e-01 & 3.16e-01 \\
							qrtquad & 120 & 21 & 20 & 23 & 7.54e+03 & 1.35e+00 & 1.39e-01 & 7.40e-02 & 4.70e-02 & 3.23e-01 & 3.23e-01 \\
							qudlin & 12 & 26 & 14 & 15 & 2.58e+02 & 3.31e+00 & 2.11e-01 & 2.64e-01 & 1.15e-01 & 7.40e-01 & 7.40e-01 \\
							sim2bqp & 2 & 7 & 4 & 5 & 4.02e+01 & 3.37e-01 & 6.08e-02 & 8.65e-02 & 2.00e-02 & 3.40e-01 & 3.40e-01 \\
							\hline
				\end{tabular}}}
			\end{table}

			\subsection{Identification of active set}
			
			We explore whether the algorithm can identify the active constraints, as postulated in Remark~\ref{rem:activeset}.  
			The analysis in Section~\ref{sec.local_conv} was done for the primal barrier method. The experiments reported here used the same primal-dual interior-point method as in the previous section.  We will see that the active-set identification is also possible with that variant.

			We start with a small instance of \texttt{harkerp2},
			\begin{equation}
				\label{eq.harkerp2}
				\begin{split}
					\min \ \   -\sum_{i=1}^n \Big(\tfrac{1}{2}x_i^2 + x_i \Big) + \Big( \sum_{i=1}^n x_i\Big)^2 + 2\sum_{j=2}^n \Big(\sum_{i = j}^n  x_i\Big)^2 \quad 
					\st \ \   x_i \geq 0, \ \ \forall i \in [n],    
				\end{split}
			\end{equation}
			with $n = 4$.
			The primal-dual optimal solution of this problem is $x_*^0 = (1, 0, 0, 0)^T$ and $z_*^0=(0, 1, 1, 1)^T$, i.e., all but the first bound constraints are active at the solution and strict complementarity holds.

			Figure~\ref{fig.slack_harkerp2} plots $\log_{10}|x_{k, i} - x_{*, i}^0|$ as a function of $k$ when the algorithm is started from $x_0 = (1, 2, 3, 4)^T$ with $\eps_f = 10^{-2}$ and $\eps_g = \eps_H = 10^{-1}$. The barrier parameter is decreased by a factor of 10 every 30 iterations. 
			Observe that, as $\mu$ decreases, $x_{k,2},x_{k,3},x_{k,4}$ approach their optimal value zero without much erratic behavior, whereas $x_{k,1}$, for which its optimal value is not at the boundary, keeps moving around $x_{*,1}=1$ significantly, no matter how small $\mu$ is.
			This experiment provides evidence that the theoretical predictions of Remark~\ref{rem:activeset} are observed in practice.
			\begin{figure}
				\centering
				\includegraphics[width=\textwidth,clip=true,trim=115 10 100 10]{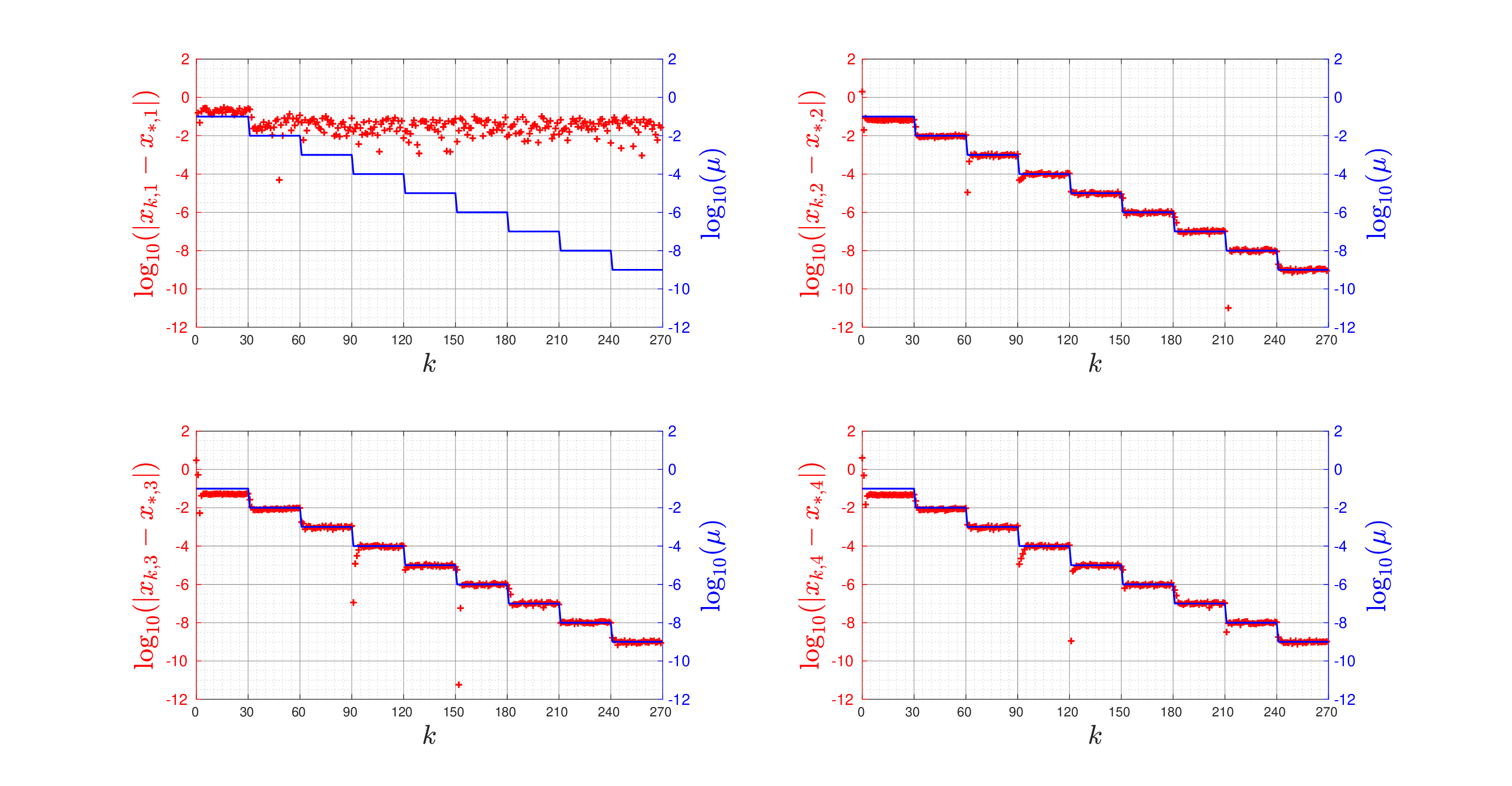} 
				\caption{\texttt{harkerp2}. $\log_{10}|x_i - x_{*, i}^0|$ for $i \in [4]$.  }
				\label{fig.slack_harkerp2}
			\end{figure}

			The final experiment examines this phenomenon for all bound-constrained examples with strictly complementary solutions.
			For each instance, we identified the set of active constraints for which strict complementarity holds, i.e., $\AAA_s=\{i\in\AAA:z^0_{*,i}>0\}$ (for details see Table~\ref{tb.indices} in \appA).  The data for Table~\ref{tb.active_test_ndeg} was generated from the last 10 iterates after running the algorithm for 5000 iterations, which is long enough to reach asymptotic behavior even for very small values of $\mu$.  
			As we proved in Section~\ref{sec.local_conv}, $\|x^\mu_{k,\AAA_s}\|$ converges to zero as $\mu\to0$, and it does so at the same rate as $\mu$, which is expected due to $z^\mu_{*,i}x^\mu_{*,i}=\mu$.

			\subsection{Update of the barrier parameter}
			\label{sec:barrier_update}
			In order to solve the original problem \eqref{eq.orig_bnd_const_prob}, we are interested in obtaining a good solution of the barrier problem (in the sense of Theorem~\ref{thm.practical_termination_v2}) for a target value $\bar\mu>0$ that is small enough so that the barrier term does not push the iterates too far from the boundary of the feasible region.  
			If one is interested in identifying which variables are at their bound, a very small value of $\bar\mu$ should be chosen, as discussed in the previous section. 
			On the other hand, if the active constraints are not of interest, it is not clear that decreasing $\mu_\ell$ below a moderate value related to the noise level is advantageous since the non-active variables remain affected by the noise and dominate the effect on the objective function.
			
			In practice, starting the optimization with the final target value, e.g., $\bar\mu=10^{-7}$, may lead to an extremely large number of iterations before convergence, a phenomenon that we also observed in our experiments in the noisy setting.
			In the non-noisy setting, a practical interior-point method starts with a large initial value of $\mu_0$ and solves a sequence of barrier problems approximately, for decreasing values of $\mu_\ell$.  The approximate solution of the barrier problem for $\mu_\ell$ then provides a good starting point for solving the next barrier problem with $\mu_{\ell+1}$. 
			
			In the non-noisy setting, $\mu_\ell$ is decreased whenever an optimality measure is below a fraction of $\mu_\ell$.  
			However, this approach cannot be applied here because the best-achievable tolerance, the right-hand-side of \eqref{eq:defCCC}, does not decrease to zero as $\mu_\ell\to0$. 
			
			In experiments that are reported in Appendix~\ref{sec.barrier_rule},
			we explored a heuristic that decreases the barrier parameter
			whenever the stopping test in Remark~\ref{rem:balanceT} holds and $\|X_k^{\mu_\ell} z_k^{\mu_\ell} - \mu_\ell e\|_\infty\leq\kappa_\mu\mu_{\ell}$ for some $\kappa_\mu\in(0,1)$, or a fixed number $N_\mu$ of iterations for $\mu_\ell$ are exceeded.  With this procedure, solutions of \eqref{eq.barrier_bnd_const} with $\bar\mu=10^{-7}$ could be computed in a reasonable number of iterations.

			\begin{table}
				\caption{Behavior of the non-degenerate active variable iterates
					$x^\mu_{k,\AAA_s}$
					over the last 10 iterations. (* Converged to a different local solution.)}
				\label{tb.active_test_ndeg}
				\centering
				\begin{threeparttable}
					\texttt{
						\resizebox{0.7\textwidth}{!}
						{%
							\begin{tabular}{|l||c|c|c|c|}
								\hline
								\rule{0pt}{10pt}
								\multirow{3}{*}{\rm Problem} & \multicolumn{4}{{p{9cm}|}}
								{\rm max.\ of $\|x^\mu_{k,\AAA_s}\|_{\infty}$ in last 10 iterations.}
								\\ [3pt] \cline{2-5} 
								\rule{0pt}{10pt}
								&\multicolumn{2}{{p{4.5cm}|}}{$\eps_f = 10^{-2}, \eps_g = \eps_H = 10^{-1}$} & \multicolumn{2}{{p{4.5cm}|}}{$\eps_f = 10^{-6}, \eps_g = \eps_H = 10^{-3}$}                                                    \\ [3pt]  \cline{2-5} 
								\rule{0pt}{10pt}
								& \multicolumn{1}{{p{1.9cm}|}}{ $\mu= 10^{-4}$}  & \multicolumn{1}{{p{1.9cm}|}}{$\mu= 10^{-8}$}   & \multicolumn{1}{{p{1.9cm}|}}{$\mu= 10^{-4}$}                         & \multicolumn{1}{{p{1.9cm}|}}{$\mu= 10^{-8}$}   \\ [2pt]  \hline
								chenhark & 1.05e-04 & 1.01e-08 & 1.05e-04 & 1.00e-08 \\
								cvxbqp1 & 8.45e-05 & 8.45e-09 & 8.33e-05 & 8.33e-09 \\
								eg1 & 2.76e-04 & 1.46e-08 & 1.99e-04 & 1.28e-08 \\
								explin & 1.30e-05 & 1.30e-09 & 1.30e-05 & 1.30e-09 \\
								explin2 & 1.03e-05 & 1.03e-09 & 1.03e-05 & 1.03e-09 \\
								expquad & 1.00e-05 & 1.00e-09 & 1.00e-05 & 1.00e-09 \\
								harkerp2 & 1.03e-04 & 1.07e-08 & 9.88e-05 & 1.00e-08 \\
								mccormck & 1.07e-04 & 1.07e-08 & 1.07e-04 & 1.07e-08 \\
								mdhole & 1.16e-04 & 1.16e-08 & 1.00e-04 & 1.00e-08 \\
								ncvxbqp1 & 2.22e-03 & 2.20e-07 & 1.97e-03 & 1.97e-07 \\
								ncvxbqp2 & 2.10e-03 & 2.10e-07 & 1.97e-03 & 1.97e-07 \\
								ncvxbqp3 & 5.44e-02 & 3.31e-06 & 2.42e-02 & 2.82e-06 \\
								obstclal & 5.76e-03 & 7.58e-07 & 3.13e-03 & 3.55e-07 \\
								obstclbl & 3.29e-03 & 1.09e-06 & 2.63e-03 & 4.37e-07 \\
								obstclbu & 3.29e-03 & 1.09e-06 & 2.63e-03 & 4.37e-07 \\
								pentdi & 4.36e-04 & 4.00e-08 & 4.26e-04 & 4.00e-08 \\
								qrtquad & 1.01e-05 & 1.01e-09 & 1.01e-05 & \, 1.36e-01$^*$\\
								qudlin & 6.63e-06 & 1.00e-09 & 1.00e-05 & 1.00e-09 \\
								sim2bqp & 1.11e-04 & 1.11e-08 & 1.00e-04 & 1.00e-08 \\
								\hline
							\end{tabular}
						}
					}
				\end{threeparttable}
			\end{table}

			\section{Concluding remarks}
			\label{sec.conclusion}
			The goal of this paper is to provide a first step towards understanding the impact of the barrier term on the convergence properties of an interior-point algorithm for problem instances with bounded noise.  By relaxing only the Armijo line-search condition, we keep the method very similar to the standard interior-point framework used in the non-noisy setting so that existing implementations can be easily adjusted.
			To keep the initial analysis manageable, we restricted our focus to problems that have only bound constraints.  As a next step, we will explore interior-point methods for noisy instances with general inequality constraints.
			We conjecture that active-set identification can also be observed in this context.
			
			We point out that the practical termination test introduced in Section~\ref{sec.stopping_test} is not limited to the interior-point setting.  The methodology of estimating the unknown problem-dependent parameters in a theoretical convergence theorem like Theorem~\ref{thm.neigh} by quantities observed during the run of the algorithm can also be applied to methods with similar formulations of a global convergence result, such as in \cite{oztoprak2023constrained,sun2023trust}.

			\section*{Acknowledgement}
			We thank Jorge Nocedal and Frank~E. Curtis for inspiring conversations that helped us to improve the work presented here.
			
			\bibliographystyle{siamplain}
			\bibliography{references}

			\pagebreak

			\appendix
			\section{Indices of  non-degenerate and degenerate active constraints, inactive constraints, and free variables}
			\label{app.active_idx}%
			In Table~\ref{tb.indices}, we provide the number of non-degenerate and degenerate active constraints, inactive constraints, and free variables, along with their corresponding indices, for the test problems discussed in Section~\ref{sec.numerical_exp}. This is particularly relevant for replicating the results in Table~\ref{tb.active_test_ndeg}. In order to determine which bound constraints are active, we run IPOPT to solve the non-noisy problems and examine the value of the primal and dual variables. If the dual variables are nonzero for the variables that their optimal value is at the boundary of the feasible region, we consider those bound constraints active. 
			
			\begin{landscape}
				\begin{table}
					\caption{Indices and number of non-degenerate, degenerate, inactive, and free variables}
					\label{tb.indices}
					\centering
					\texttt{
						\resizebox{1.2\textwidth}{!}{%
							\begin{tabular}{|p{1.5cm}|l||l|l|l|l|l|l|l|l|}
								\hline
								\rule{0pt}{15pt}
								\multirow{2}{*}{Problem} & \multirow{2}{*}{$n$} & \multicolumn{2}{l|}{non-deg. act. vars}    & \multicolumn{2}{l|}{deg. act. vars}    & \multicolumn{2}{l|}{bnd. \& inact. vars}    & \multicolumn{2}{l|}{free vars}    \\ \cline{3-10} 
								&                   & \multicolumn{1}{l|}{ct} & \multicolumn{1}{c|}{${\AAA_s}$} & \multicolumn{1}{l|}{ct} & \multicolumn{1}{c|}{${\AAA_d}$} & \multicolumn{1}{l|}{ct} &  \multicolumn{1}{c|}{${\III_b}$} & \multicolumn{1}{l|}{ct} & \multicolumn{1}{c|}{${\III_f}$} \\ \hline
								biggsb1 & $1000$ & $0$ &  $\emptyset$ & $999$ & $[999]$  & $0$ & $\emptyset$ & $1$ & $\{1000\}$ \\
								\hline
								chenhark & $1000$ & $300$ &  $\{ 701, \dots, 1000\}$& $200$ &  $ \{501, \dots, 700\}$ &  $500$ & $  [500]$ & $0$ & $\emptyset$ \\
								\hline
								cvxbqp1 & $100$ & $100$ & $\{1, \dots, 100\}$ & $0$ & $\emptyset$ & $0$ & $\emptyset$ &  $0$ & $\emptyset$  \\
								\hline
								eg1 & $3$ & $1$ & $\{3\}$   & $0$ & $\emptyset$ & $1$ & $\{2\}$ & $1$ & $\{1\}$ \\
								\hline
								eigena & $110$ & $0$ & $\emptyset$ & $90$ & $[110]\setminus \III_b $ & $20$ & $\begin{array}{lcl}\{1, 2, 12, 14, 23, 26, 34, 38,  45, 50, 56, \\ 62, 67, 74, 78, 86, 89, 98, 100, 110 \}\end{array}$  & $0$ & $\emptyset$ \\
								\hline
								explin & $120$ & $115$ & $[120]\setminus \III_b$ & $0$ & $\emptyset$  & 5 & $  \{2, 4, 6, 8, 10\}$ & $0$ & $\emptyset$ \\
								\hline
								explin2 & $120$ & $117$ & $ [120]\setminus \III_b$ & $0$ & $\emptyset$ & 3  & $  \{6, 8, 10\}$  & $0$ & $\emptyset$ \\
								\hline
								expquad & $120$ & $10$  & $[10]$ & $0$ & $\emptyset$ &  $0$ & $\emptyset$ & 110 & $ [120]\setminus \AAA_s$ \\
								\hline
								harkerp2 & $100$ & $99$ & $\{2, \dots, 100\}$  & $0$ & $\emptyset$ & 1 & $\{1 \}$ & $0$ & $\emptyset$ \\
								\hline
								mccormck & $1000$ &  $1$  & $\{1000 \}$   & $0$ & $\emptyset$ & 999 & $[999]$ & $0$ & $\emptyset$ \\
								\hline
								mdhole & $2$ & $1$  & $\{1 \}$ & $0$ & $\emptyset$ & $0$ & $\emptyset$ & 1 & $\{2 \}$  \\
								\hline
								ncvxbqp1 & $1000$ & $1000$  & $[1000]$ & $0$  & $\emptyset$ & $0$ & $\emptyset$ & $0$ & $\emptyset$ \\
								\hline
								ncvxbqp2 & $1000$ & 992 & $[1000] \setminus \III_b$ & $0$  & $\emptyset$ & $8$ & $\{146, 248, 266, 302, 338, 428, 446, 464\}$ & $0$  & $\emptyset$\\
								\hline
								ncvxbqp3 & $1000$ & $987$ & $[1000] \setminus \III_b$  & $0$ & $\emptyset$ &  13 & $\begin{array}{lcl}\{{3}, 352, 388, 406, 424, 442, 460,\\ 478, 496, 536, 548, 722, 740, {746}\}\end{array}$   & $0$ & $\emptyset$ \\
								\hline
								nonscomp & $1000$ &  $0$ & $\emptyset$  & $316$ & $\{3, 6, 9, \dots, 948 \}$  & 684 & $ [1000] \setminus \AAA_d$ & $0$ & $\emptyset$\\
								\hline
								obstclal & $64$ & $27$ & $\begin{array}{lcl}\{ 11, \dots, 14, 18, \dots, 23, 26 \dots 31,\\ 34, \dots, 39, 42, \dots, 46 \} \end{array} $& $0$ & $\emptyset$  & $37$ & $[64] \setminus \AAA_s$& $0$ & $\emptyset$\\
								\hline
								obstclbl & $64$ & $48$ & $[64] \setminus \III_b$ & $0$ & $\emptyset$& 16&$\begin{array}{lcl}\{ 4,5,12,13,25,26,31,32,33,\\ 34,39,40,52,53,60,61 \} \end{array} $ & $0$ & $\emptyset$ \\
								\hline
								obstclbu & $64$ & $48$ & $[64] \setminus \III_b$ & $0$ & $\emptyset$ & 16&$\begin{array}{lcl}\{ 4,5,12,13,25,26,31,32,33,\\ 34,39,40,52,53,60,61 \} \end{array} $ & $0$ & $\emptyset$ \\
								\hline
								pentdi & $1000$ & $502$  & $\{3, 498, 501, 502, \dots, 1000\}$ & 496  &$[1000] \setminus (\AAA_s \cup \III_b)$ &2 & $\{1, 500 \}$  & $0$ & $\emptyset$\\
								\hline
								qrtquad & $120$ & $5$ &$\{1, 3, 5, 7, 9\} $& $0$ & $\emptyset$ & 115 &$[120] \setminus \AAA_s$ & $0$ & $\emptyset$\\
								\hline
								{qudlin} & $12$ & $10$  & $\{3, \dots, 12\} $ & $0$ & $\emptyset$ & 2 &  $\{1, {2}\} $  & $0$ & $\emptyset$\\
								\hline
								sim2bqp & $2$ & $1$ & $\{2\} $ & $0$ & $\emptyset$ & $0$ & $\emptyset$ &  1 & $\{1\} $ \\
								\hline
					\end{tabular}}}
				\end{table}
			\end{landscape}

			\section{Uniform Lower Bound on the step size in Lemma~\ref{lem:alpha_lb}}
			\label{app.lb_on_alpha}
			\begin{lemma}
				\label{lemma.unif_lb_on_alpha}
				Consider $\phi(\alpha) = a \alpha^2 + b \alpha - c$ in Lemma~\ref{lem:alpha_lb}. 
				The positive root of $\phi(\alpha)$, i.e., $\alpha_k = \tfrac{-b + \sqrt{b^2 + 4 a c}}{2 a}$ is a decreasing function of $a$ and $b$.
			\end{lemma}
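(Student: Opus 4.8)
The plan is to eliminate the apparently indeterminate quotient by rationalizing the numerator. Multiplying the numerator and denominator of $\alpha_k = \frac{-b + \sqrt{b^2+4ac}}{2a}$ by the conjugate $b + \sqrt{b^2+4ac}$ and using $\left(\sqrt{b^2+4ac}\right)^2 - b^2 = 4ac$ gives the equivalent closed form
\[
\alpha_k = \frac{2c}{b + \sqrt{b^2 + 4ac}},
\]
and this single identity is essentially the whole proof. Note that this form also makes the positivity of $\alpha_k$ transparent (since $a,b,c>0$) and shows that the expression remains well-behaved even in the limit $a\to 0$, which is reassuring for the uniform-lower-bound argument in Lemma~\ref{lem:alpha_lb}.

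From here I would fix $c>0$ and regard the denominator $D(a,b) := b + \sqrt{b^2+4ac}$ as a function of the positive variables $a$ and $b$. Since $a \mapsto 4ac$ is strictly increasing (as $c>0$) and $t\mapsto\sqrt{t}$ is strictly increasing, $D$ is strictly increasing in $a$; likewise $b\mapsto b$ and $b\mapsto\sqrt{b^2+4ac}$ are both increasing, so $D$ is strictly increasing in $b$. Consequently $\alpha_k = 2c/D(a,b)$ is a fixed positive constant divided by a positive, strictly increasing quantity, hence strictly decreasing in $a$ and strictly decreasing in $b$, which is the claim.

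If one prefers a derivative-based argument instead, one can differentiate the original expression directly: with $s:=\sqrt{b^2+4ac}$ one gets $\partial\alpha_k/\partial b = \frac{1}{2a}\left(\frac{b}{s} - 1\right) < 0$ because $b < s$ whenever $ac>0$, and $\partial\alpha_k/\partial a = \frac{c(b-s)}{a\,s\,(s+b)} < 0$ after substituting $s - b = 4ac/(s+b)$. There is no genuine obstacle here; the only judgment call is that the rationalized form is the cleaner route, since it yields both monotonicity statements at once without any casework.
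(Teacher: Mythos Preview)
Your proof is correct and, in fact, cleaner than the paper's. The paper differentiates the quadratic-formula expression directly: writing $s=\sqrt{b^2+4ac}$, it computes $\partial\alpha_k/\partial a = \bigl(b - (2ac+b^2)/s\bigr)/(2a^2)$ and checks the numerator is nonpositive by squaring (equivalent to $4a^2c^2\ge 0$), and similarly $\partial\alpha_k/\partial b = (b/s - 1)/(2a)\le 0$ since $4ac\ge 0$. Your rationalization $\alpha_k = 2c/(b+s)$ collapses both monotonicity claims into the single observation that the denominator is increasing in $a$ and in $b$, avoiding the sign-checking step for $\partial\alpha_k/\partial a$ entirely. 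A further bonus of your form, which you already note, is that it makes the limit $a\to 0$ manifestly finite and equal to $c/b$; this is precisely what is needed in Lemma~\ref{lem:alpha_lb} to conclude that the positive root is uniformly bounded away from zero once $a$ and $b$ are uniformly bounded above, so your approach dovetails more directly with the application.
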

			\begin{proof}
				The partial derivative of $\alpha_k$ with respect to $a$ is given by
				\begin{align*}
					\frac{\partial \alpha_k}{\partial a} = \frac{b - \tfrac{2 ac + b^2}{\sqrt{4 a c + b^2}}}{2 a^2 },
				\end{align*} 
				observe that $b - \tfrac{2 ac + b^2}{\sqrt{4 a c + b^2}} \leq 0 $ because $4 a^2 c^2 \geq 0$. Thus, $\alpha_k$ is a decreasing function of $a$. The partial derivative of $\alpha_k$ with respect to $b$ is given by
				\begin{align*}
					\frac{\partial \alpha_k}{\partial b} = \frac{\tfrac{b}{\sqrt{4 a c + b^2}} - 1}{2 a},
				\end{align*}
				observe that $\tfrac{b}{\sqrt{4 a c + b^2}} - 1 \leq 0$ because  $ 4  a c \geq 0$. Thus, $a_k$ is a decreasing function of $b$.
			\end{proof}

			\section{Further numerical results on the barrier subproblem}
			\label{sec.more_num_res}
			
			In this appendix, we provide additional numerical results concerning the effectiveness of the stopping test outlined in Theorem~\ref{thm.practical_termination_v2} with a smaller noise level. We observe a similar algorithmic behavior to that provided in Table~\ref{tb.stop_test_1}.

			\begin{table}[ht]
				\caption[]{Performance of the stopping test with $\mu = 10^{-1}$, $\eps_f = 10^{-6}$, $\eps_g = \eps_H = 10^{-3}$, $\nu = 10^{-6}$.}
				\label{tb.stop_test_3}
				\centering
				\texttt{
					\resizebox{1\textwidth}{!}{%
						\begin{tabular}{|p{1.6cm}||r|r|r|p{1.5cm}|p{1.5cm}|p{1.6cm}|c|c|p{1.5cm}|p{1.5cm}|  }
							\hline
							\rule{0pt}{15pt}
							Problem  &  \multicolumn{1}{c|}{$n$} &  \multicolumn{1}{c|}{ter} & \multicolumn{1}{c|}{\#f } & \multicolumn{1}{c|}{$ \|\nabla \tilde \varphi_0^\mu\|$} & \multicolumn{1}{c|}{$ \|\nabla \tilde  \varphi_{\texttt{ter}}^\mu\|$} &\multicolumn{1}{c|}{
								${\|\nabla \tilde \varphi^\mu\|^{\texttt{av}}}$
							} &   \multicolumn{1}{c|}{ ${\|\nabla \tilde \varphi_{\texttt{ter}}^\mu\|}_{\hat G{^{-1}}}$
							} &\multicolumn{1}{c|}{ ${{ \|\nabla \tilde \varphi^\mu\|}^{\texttt{av}}_{\hat G^{-1}}} $
							} & \multicolumn{1}{c|}{$T_{1,\texttt{ter}}$} & \multicolumn{1}{c|}{$T_{2,\texttt{ter}}$} 
							\\[5pt]
							\hline
							biggsb1 & 1000 & 5 & 6 & 3.14e+01 & 4.27e-03 & 1.42e-03 & 2.46e-03 & 8.20e-04 & 4.02e-03 & 4.02e-03 \\
							chenhark & 1000 & 271 & 287 & 1.57e+01 & 1.90e-02 & 1.05e-02 & 8.67e-01 & 1.85e-01 & 1.26e+00 & 1.26e+00 \\
							cvxbqp1 & 100 & 7 & 8 & 2.57e+03 & 1.50e-03 & 1.45e-03 & 2.25e-05 & 3.13e-05 & 3.04e-03 & 3.04e-03 \\
							eg1 & 3 & 6 & 7 & 2.36e+00 & 1.13e-03 & 1.15e-03 & 6.91e-04 & 4.84e-04 & 3.67e-03 & 3.67e-03 \\
							eigena & 110 & 19 & 21 & 7.28e+01 & 1.21e-01 & 1.40e-03 & 8.77e-03 & 1.18e-03 & 1.07e-02 & 1.07e-02 \\
							explin & 120 & 17 & 18 & 7.65e+03 & 1.41e-03 & 1.40e-03 & 5.82e-05 & 4.56e-05 & 3.08e-03 & 3.08e-03 \\
							explin2 & 120 & 17 & 18 & 7.65e+03 & 1.41e-03 & 1.40e-03 & 2.74e-05 & 3.13e-05 & 3.02e-03 & 3.02e-03 \\
							expquad & 120 & 15 & 16 & 7.64e+03 & 5.30e-02 & 1.40e-03 & 1.17e-03 & 4.71e-04 & 3.23e-03 & 3.23e-03 \\
							harkerp2 & 100 & 10 & 11 & 9.36e+06 & 9.41e-03 & 1.45e-03 & 1.26e-04 & 1.16e-04 & 3.79e-03 & 3.79e-03 \\
							mccormck & 1000 & 7 & 8 & 9.38e+01 & 1.37e-03 & 1.42e-03 & 5.68e-04 & 5.87e-04 & 3.53e-03 & 3.53e-03 \\
							mdhole & 2 & 29 & 49 & 2.75e+03 & 7.39e-04 & 1.48e-03 & 2.78e-04 & 3.65e-04 & 3.33e-03 & 3.33e-03 \\
							ncvxbqp1 & 1000 & 51 & 52 & 8.15e+01 & 3.08e-03 & 1.42e-03 & 1.80e-04 & 1.75e-04 & 9.35e-03 & 9.35e-03 \\
							ncvxbqp2 & 1000 & 107 & 108 & 6.01e+01 & 1.42e-03 & 1.42e-03 & 6.00e-04 & 4.43e-04 & 1.11e-02 & 1.11e-02 \\
							ncvxbqp3 & 1000 & 146 & 147 & 4.83e+01 & 1.75e-02 & 1.39e-03 & 2.16e-03 & 6.59e-04 & 1.54e-02 & 1.54e-02 \\
							nonscomp & 1000 & 7 & 14 & 7.59e+03 & 9.17e+00 & 1.61e-03 & 1.14e+00 & 7.66e-04 & 1.24e+00 & 1.24e+00 \\
							obstclal & 64 & 5 & 6 & 7.54e+00 & 4.87e-03 & 1.42e-03 & 4.04e-03 & 8.43e-04 & 4.65e-03 & 4.65e-03 \\
							obstclbl & 64 & 5 & 6 & 1.95e+02 & 2.09e-03 & 1.43e-03 & 8.25e-04 & 3.60e-04 & 3.55e-03 & 3.55e-03 \\
							obstclbu & 64 & 5 & 6 & 1.91e+02 & 2.19e-03 & 1.43e-03 & 7.31e-04 & 3.60e-04 & 3.55e-03 & 3.55e-03 \\
							pentdi & 1000 & 5 & 6 & 1.72e+01 & 1.45e-03 & 1.42e-03 & 3.15e-04 & 3.06e-04 & 3.17e-03 & 3.17e-03 \\
							qrtquad & 120 & 19 & 22 & 7.54e+03 & 1.84e-03 & 1.39e-03 & 4.99e-04 & 4.70e-04 & 3.23e-03 & 3.23e-03 \\
							qudlin & 12 & 24 & 30 & 2.58e+02 & 1.04e-02 & 1.25e-03 & 7.57e-03 & 8.64e-04 & 8.19e-03 & 8.19e-03 \\
							sim2bqp & 2 & 5 & 6 & 4.03e+01 & 8.57e-03 & 1.30e-03 & 1.73e-03 & 5.11e-04 & 3.39e-03 & 3.39e-03 \\
							\hline
				\end{tabular}}}
			\end{table}
			
			\section{Numerical observation: Impact of gradient noise distribution on non-noisy gradient} 
			\label{app.obs}
			In this appendix, we share an observation related to the non-noisy gradient at the final iterations of the algorithm.  Depending on the specific distribution of the randomly added noise to generate the noisy gradient, we see an interesting pattern in the non-noisy gradient.
			
			We consider the following optimization problem
			\begin{equation}
				\label{eq.obs}
				\min_{x \in \RR^n}\ \tfrac{c_1}{2}\left(x_1 - 1\right)^2 + c_2 \cdot x_2\ \ \st \ \ x \geq 0,
			\end{equation}
			where values of the coefficients $c_1$ and $c_2$ are specified in each example. 
			For all the following experiments, noise in the objective function follows a Bernoulli distribution with parameter of $0.5$ and magnitude of $\eps_f = 10^{-2}$  and the diagonal elements of the Hessian are perturbed with a Bernoulli distributed noise with parameter $0.5$ and magnitude $\eps_H = 10^{-1}$. The distribution of the noise in the gradient is specified in each example respectively, where $\eps_g = 10^{-1}$.  In each example, we solve the barrier subproblem with $\mu = 10^{-8}$ and run the algorithm for $1000$ iterations. The scatter plots of the iterates $x_2$ versus $x_1$, the noisy gradient $\nabla_{x_2} \tilde \varphi$ versus $\nabla_{x_1} \tilde \varphi$, and the deterministic gradient $\nabla_{x_2}  \varphi$ versus $\nabla_{x_1}  \varphi$ are provided for the last $200$ iterations of the run of the algorithm (See Figures~\ref{fig.obs_ex1}, \ref{fig.obs_ex2}, \ref{fig.obs_ex3}, and \ref{fig.obs_ex4}).
			\begin{example} \label{ex.obs1}
				Let  $c_1 = c_2 = 1$ in problem~\eqref{eq.obs}. Noise in the gradient is uniformly distributed on the surface of a 2-dimensional circle with radius $\eps_g$ around the deterministic gradient.
				
				In Figure~\ref{fig.obs_ex1}, we observe that the points $(x_1, x_2)$ are scattered around an ellipse. Since $x_{2,*} = 0$, the ellipse is narrower in the $x_2$ direction. In addition, $(x_1, x_2)$'s have denser concentrations around the circumference of the ellipse. We further observe that $(\nabla_{x_1} \tilde \varphi, \nabla_{x_2} \tilde \varphi)$'s are scattered uniformly in a circle with radius $\approx 10^{-1}$. 
				In addition, the deterministic gradient $(\nabla_{x_1}  \varphi, \nabla_{x_2}  \varphi)$'s are densely concentrated around the circumference of a circle with radius $\approx 10^{-1}$.
				\begin{figure}[H]
					\centering
					\includegraphics[width=\textwidth,clip=true,trim=10 170 100 100]{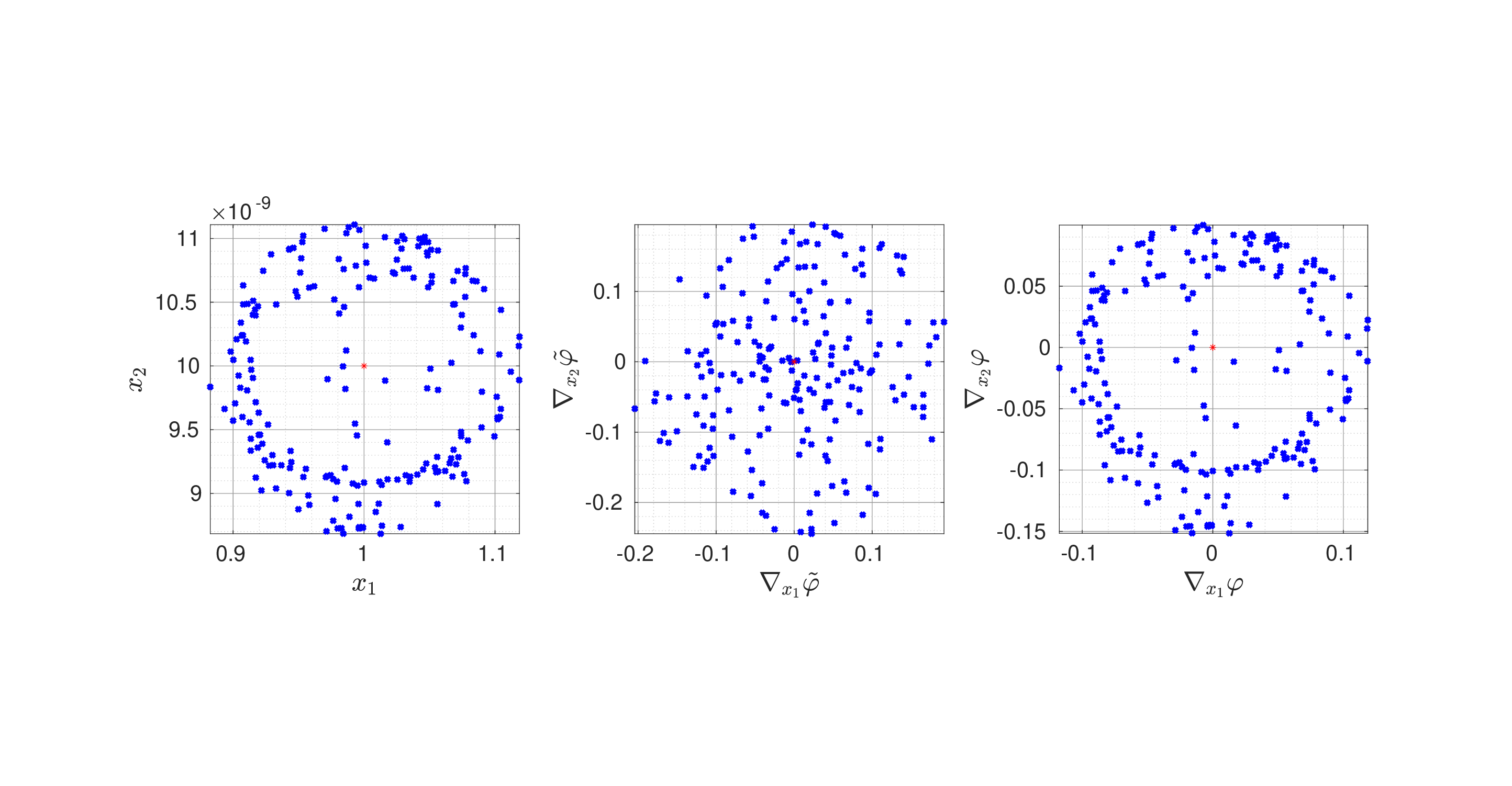}
					\caption{Example~\ref{ex.obs1} observation}
					\label{fig.obs_ex1}
				\end{figure}
			\end{example}
			\begin{example}  \label{ex.obs2}
				Let  $c_1 = c_2 = 1000$ in problem~\eqref{eq.obs}. Noise in the gradient is uniformly distributed on the surface of a 2-dimensional circle with radius $\eps_g$ around the deterministic gradient.

				In Figure~\ref{fig.obs_ex2}, we observe similar phenomena as in Example~\ref{ex.obs1} and Figure~\ref{fig.obs_ex1}. The concentrations of points $(x_1, x_2)$ and $(\nabla_{x_1}  \varphi, \nabla_{x_2}  \varphi)$ around the circumferences of the ellipse and circle are even more pronounced. This is due to the scaling of the problem with $c_1 = c_2 = 1000$. Comparing the range of the axes of the scatter plots of $x_2$ versus $x_1$ in Figures~\ref{fig.obs_ex1} and \ref{fig.obs_ex2} suggests that this scaling results in a more accurate solution.
				\begin{figure}[H]
					\centering
					\includegraphics[width=\textwidth,clip=true,trim=10 170 100 100]{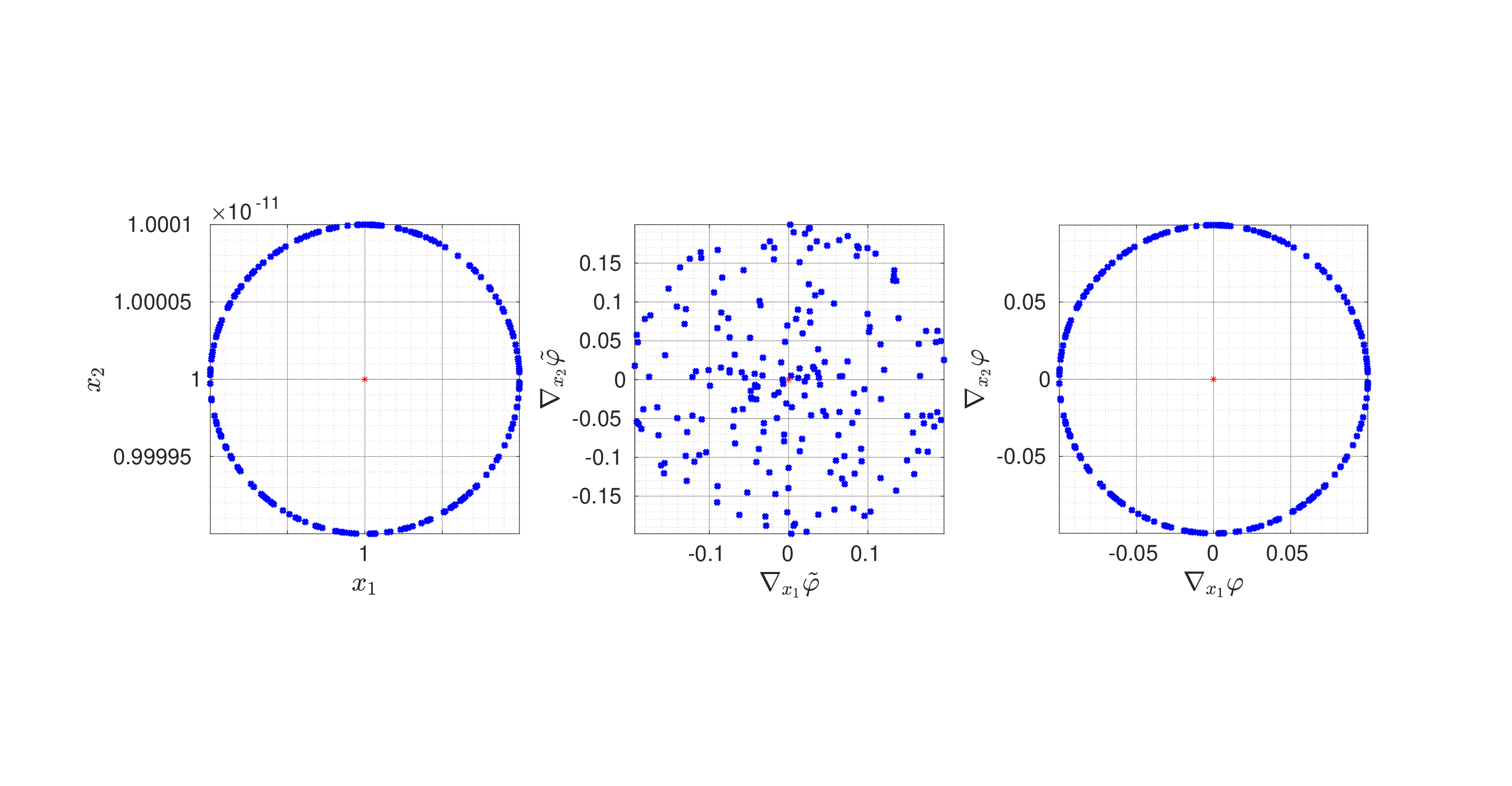}
					\caption{Example~\ref{ex.obs2} observation}
					\label{fig.obs_ex2}
				\end{figure}
			\end{example}
			\begin{example} \label{ex.obs3}
				Let  $c_1 = c_2 = 1$ in problem~\eqref{eq.obs}. Each element of the gradient is perturbed with a uniformly distributed noise with support $[-\eps_g, \eps_g]$.

				In Figure~\ref{fig.obs_ex3}, we observe that the points $(x_1, x_2)$ are scattered in a rectangle and since $x_{2,*} = 0$, the rectangle is narrower in the $x_2$ direction. We further observe that $(\nabla_{x_1} \tilde \varphi, \nabla_{x_2} \tilde \varphi)$ and $(\nabla_{x_1}  \varphi, \nabla_{x_2}  \varphi)$'s are scattered uniformly in a square/rectangle where the length of the edges is close to $0.1$.
				
				\begin{figure}[H]
					\centering
					\includegraphics[width=\textwidth,clip=true,trim=10 170 100 100]{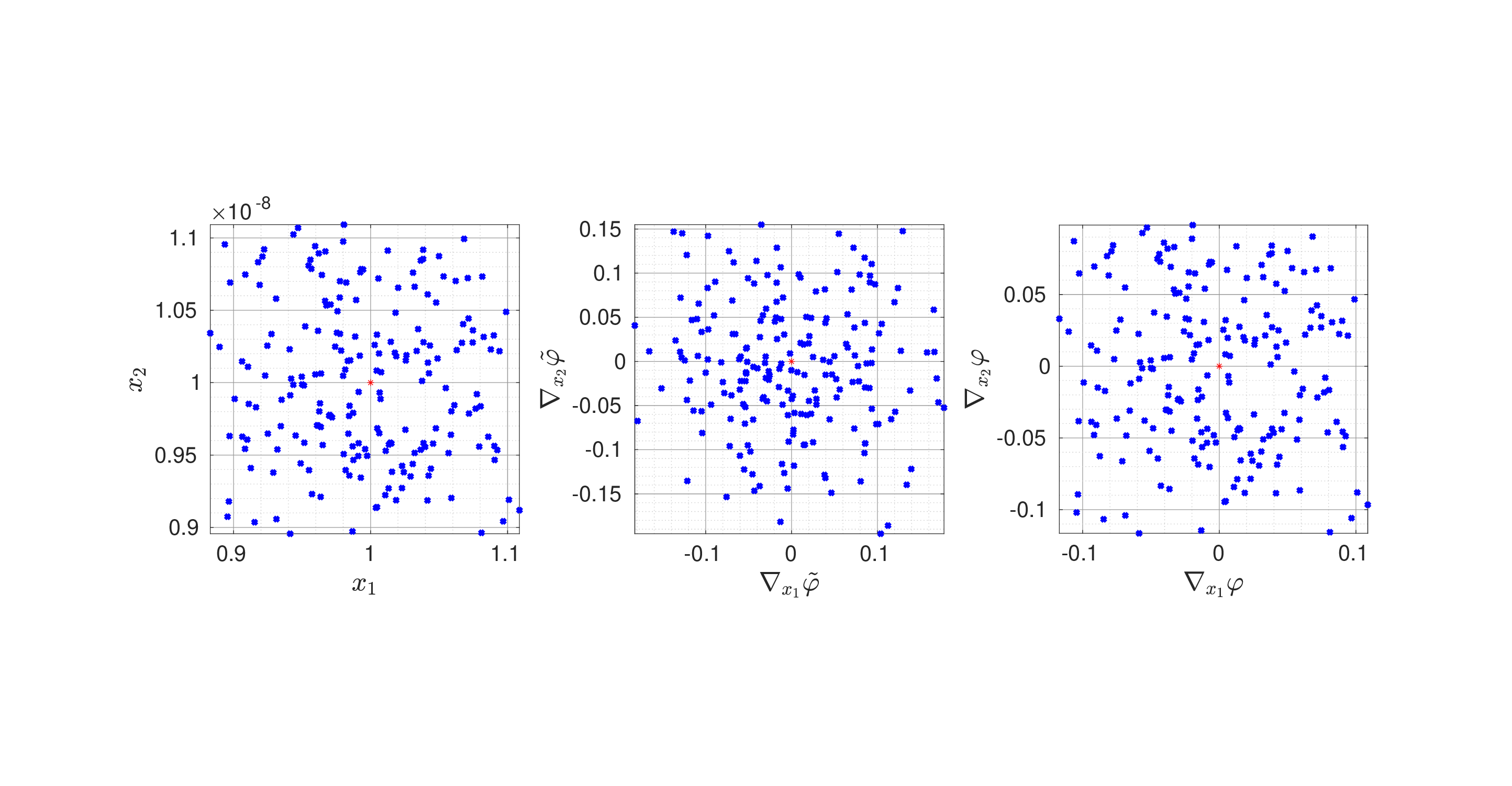}
					\caption{Example~\ref{ex.obs3} observation}
					\label{fig.obs_ex3}
				\end{figure}
			\end{example}
			\begin{example}  \label{ex.obs4}
				Let  $c_1 = c_2 = 1000$ in problem~\eqref{eq.obs}. Each element of the gradient is perturbed with a uniformly distributed noise with support $[-\eps_g, \eps_g]$.
				
				In Figure~\ref{fig.obs_ex4}, we have a similar observation as in Example~\ref{ex.obs3} and Figure~\ref{fig.obs_ex3}. Again, comparing the range of the axes in the scatter plots of $x_2$ versus $x_1$ in Figures~\ref{fig.obs_ex3} and \ref{fig.obs_ex4} suggests that this scaling results in a more accurate solution.
				
				\begin{figure}[H]
					\centering
					\includegraphics[width=\textwidth,clip=true,trim=10 170 100 100]{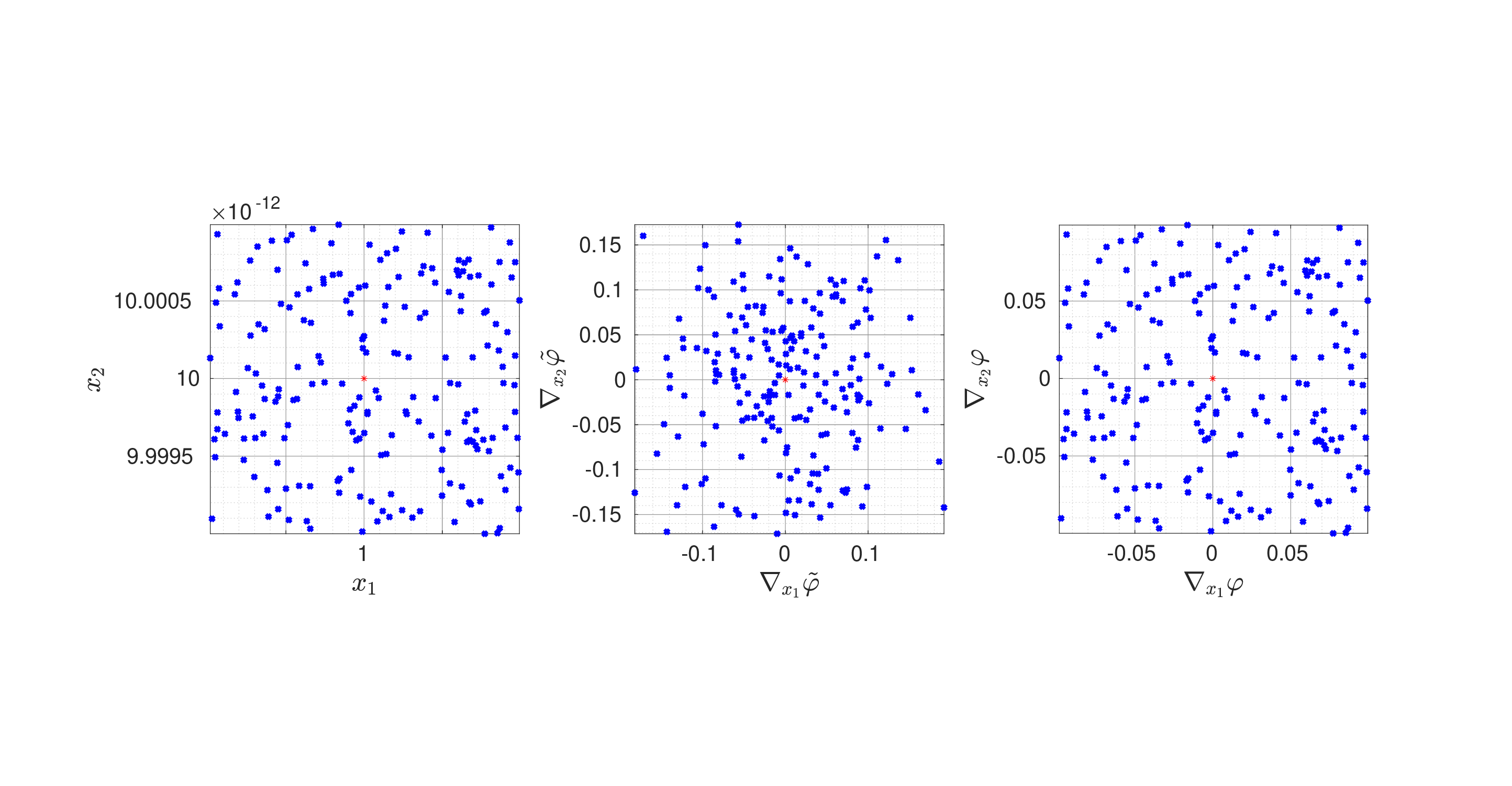}
					\caption{Example~\ref{ex.obs4} observation}
					\label{fig.obs_ex4}
				\end{figure}
			\end{example}
			
			\section{Numerical results on the barrier parameter update rule}
			
			\label{sec.barrier_rule}
			
			In this appendix,  we propose a heuristic for updating the barrier parameter $\mu$ in order to solve problem~\eqref{eq.orig_bnd_const_prob} in the interior-point method framework and illustrate its effectiveness by providing numerical experiments.
			
			As mentioned in Section~\ref{sec.numerical_exp}, we observed that the primal-dual implementation improves the convergence behavior of the algorithm significantly compared to the primal approach. Thus, the implemented algorithm follows the primal-dual approach, as presented in Algorithm~\ref{alg.bnd_const_update_barrier}. This algorithm consists of two loops: the outer \emph{for loop} and the inner \emph{while loop}. The overall algorithm stops when the barrier parameter becomes smaller than a $\mu_{\min}$. Ignoring the stopping condition of the inner while loop, this loop can be viewed as the primal-dual implementation of Algorithm~\ref{alg.bnd_const_fixed_mu} for a fixed $\mu = \mu_\ell$. 
			Hence, all experiments conducted with a fixed $\mu$, as presented in Section~\ref{sec.numerical_exp}, correspond to the inner while loop of Algorithm~\ref{alg.bnd_const_update_barrier}.

			\begin{algorithm}[ht]
				\caption{: Interior-point with relaxed line search for solving problem \eqref{eq.orig_bnd_const_prob} }
				\label{alg.bnd_const_update_barrier}
				\begin{algorithmic}[1]
					\Require $\nu \in (0, \tfrac12)$; $\eps_R > 2 \eps_f$; $\tau_{\min} \in (0, 1)$;  $N_{\mu} = 10$; $\kappa_{\mu} = 10$; $\kappa_{\text{dec}} = 0.1$;  $\kappa_{\Sigma} = 10^4$; $\mu_{\min} \in (0, 1)$; Initial iterate $x_0 \in \RR^n$; Initial dual variables $z_0 \in \RR^{n_u}$, $\mu_0 \in \RR_{>0}$.
					\For{$\ell = 0, 1, \dots$}
					\State \label{st.frac_to_bnd}Set $\tau_\ell \gets \max\{\tau_{\min}, 1 - \mu_\ell \} $.
					\While{\label{st.inner_while}a prescribed stopping condition is not satisfied}
				\State Compute $d_k$  and $d_k^z$  by \eqref{eq.d_x_0} and \eqref{eq.d_z}, respectively.
				\State Compute $\alpha_k^{\max}$ by \eqref{eq.frac_to_the_bnd} and $\alpha_k^{\text{dual}}$ by \eqref{eq.frac_to_the_bnd_z} with $\mu = \mu_\ell$ and $\tau = \tau_\ell$ .
				\State \label{st.alpha_k}Set {$\alpha_k \gets \left(\tfrac{1}{2}\right)^i \alpha_k^{\max} $ ($i$: smallest element of $\NN $ yielding  \eqref{eq.relaxed_armijo_bnd} with $\mu = \mu_\ell$).  
				}
				\State \label{st.x_k_z_k}Set $x_{k + 1}^{\mu_\ell} \gets x_k^{\mu_\ell} +\alpha_k d_k$, and  $z_{k+1}^{\mu_\ell} \gets  z_k^{\mu_\ell} + \alpha_k^{\text{dual}} d^z_k$.
				\State Reset $z_{k+1}^{\mu_\ell}$ according to \eqref{eq.dual_var_proj}.
				\State Set $k \gets k+1$.
				\EndWhile
				\IIf{$\mu_\ell \leq \mu_{\min}$} Stop.
				\EndIIf
				\State \label{st.mu_update}Set $\mu_{l+1} \gets \kappa_{\text{dec}} \mu_\ell$.
				\State \label{st.x_z_reset}Set $x_0^{\mu_{l+1}} \gets x_k^{\mu_\ell}$, and $z_0^{\mu_{l+1}} \gets z_k^{\mu_\ell}$. 
				\State \label{st.k_reset}Set $k \gets 0$.
				\EndFor
			\end{algorithmic}
		\end{algorithm}
		
		Before presenting the results of the numerical experiments, let us briefly review Algorithm~\ref{alg.bnd_const_update_barrier}. At the beginning of each iteration $j$ of the outer for loop, the fraction-to-the-boundary rule parameter $\tau_\ell$  is updated \cite{wachter2006implementation} in line \ref{st.frac_to_bnd}. Then, the while loop condition checks whether the barrier subproblem is solved to a prescribed desired accuracy.   If the condition is met, then the barrier parameter is updated in line \ref{st.mu_update}. In addition, the iterates and iteration counter (for the inner loop) are reset in lines \ref{st.x_z_reset} and \ref{st.k_reset}, respectively. Conversely, if the condition is not met, in iteration $k$ of the inner while loop, the steps $d_k$ and $d_k^z$ are defined as the solutions of the linear system 
		\begin{align}
			\label{eq.d_x_z}
			\begin{bmatrix}
				\tilde H_k^\mu & - I \\
				Z_k^\mu  &  X_k^\mu\\
			\end{bmatrix}\begin{bmatrix}
				d_k \\
				d_k^z 
			\end{bmatrix} = -\begin{bmatrix}
				\tilde g^\mu_k - z_k^\mu \\
				X_k^\mu z_k^\mu - \mu  e  \\
			\end{bmatrix}.
		\end{align}
		To compute $d_k$ and $d_k^z$, we multiply the second row of \eqref{eq.d_x_z} by $(X_k^\mu)^{-1}$ and add up both rows to obtain 
		\begin{align}
			\label{eq.d_x_0}
			\left(\tilde H_k^\mu + (X_k^\mu)^{-1} Z_k^\mu\right) d_k = -(\tilde g^\mu_k - \mu (X_k^\mu)^{-1}\mathbf{e}).
		\end{align}
		As we already discussed in Section~\ref{sec.numerical_exp}, we ensure that the $\tilde H_k^{\mu} + (X_k^\mu)^{-1} Z_k^\mu$ is positive definite by adding a multiple of the identity matrix to it before calculating the step $d_k$. 
		After finding $d_k$ by solving \eqref{eq.d_x_0}, we proceed to compute $d_k^z$ by 
		\begin{align}
			\label{eq.d_z}
			d_k^z = - (X_k^\mu)^{-1} (Z_k^\mu d_k - \mu e) - z_k^\mu.
		\end{align}
		Next, the fraction-to-the-boundary rule determines the upper bound on the step sizes in $d_k$ and $d_k^z$ directions. The upper bound on the primal step size, $\alpha_k^{\max}$, is defined similar to \eqref{eq.frac_to_the_bnd} with $\tau = \tau_\ell$. As for the upper bound on the dual step size, we define $ \alpha_k^{\text{dual}}$ as follows
		\begin{align}
			\alpha_k^{\text{dual}} := \max \left\{ \alpha \in (0, 1]:  z_k^\mu + \alpha d_k^z \geq (1 - \tau_\ell) z_k^\mu \right\}  \label{eq.frac_to_the_bnd_z}
		\end{align}
		with $\tau_l \in (0, 1)$. Subsequently, $\alpha_k$ is determined in line~\ref{st.alpha_k} and the primal and dual iterates are updated in line~\ref{st.x_k_z_k}. 
		Following \cite{wachter2006implementation}, to prevent the primal-dual Hessian $ \tilde H_k^{\mu} + (X_k^\mu)^{-1}Z_k^\mu$ from deviating significantly from the primal Hessian $\tilde G_k^\mu = \tilde H_k^\mu + \mu (X_k^\mu)^{-2}$,  the dual variables are projected as 
		\begin{align}
			\label{eq.dual_var_proj}
			z_{k+1, i}^\mu \gets \max \left\{\min \left \{ z_{k+1, i}^\mu, \frac{\kappa_{\Sigma} \mu_\ell}{x_{k+1, i}^\mu} \right\}, \frac{\mu_\ell}{\kappa_{\Sigma} x_{k+1, i}^\mu}\right\}, \ \ \text{for} \ \ i \in [n],
		\end{align}
		where $\kappa_{\Sigma} \geq 1$.

		In the remainder of this appendix, we present numerical results regarding the performance of the overall Algorithm~\ref{alg.bnd_const_update_barrier} for solving problem~\eqref{eq.orig_bnd_const_prob} and the importance of the heuristic for decreasing the barrier parameter.
		
		As discussed in Section~\ref{rem:balanceT}, for good practical performance it is important to start the algorithm with a relatively large value of $\mu_0$ (such as $0.1$) and only decrease the barrier parameter when the corresponding barrier problem has been solved sufficiently accurately.
		The stopping test in Theorem~\ref{thm.practical_termination_v2} provides an upper bound on the smallest values of the scaled norm of the gradient of the barrier function upon that can be achieved. This condition naturally suggests to reduce the barrier parameter as soon as condition~\ref{it.stop1_v2} or \ref{it.stop2_v2} is satisfied, i.e., whenever $\left\|\nabla \tilde \varphi^{\mu}(x_k^\mu) \right\|_{(\hat G_k^{\mu})^{-1}} \leq \max\{T_{1, k}, T_{2, k} \}$ with $T_{1, k}$ and $T_{2, k} $ defined in \eqref{eq:Tk}. However, the upper bound $\max\{T_{1, k}, T_{2, k} \}$ is based on pessimistic assumptions on the error in the function.
		More importantly, the tolerances do not become tighter as $\mu$ decreases and the termination test might get triggered very quickly after an update of $\mu_\ell$ even though not enough iterations have been taken to solve the new barrier problem well.
		We demonstrate this issue in Figure~\ref{fig.non_deg_stop_test_harkerp2}. We use $j$ to indicate the accumulation of the total number of iterations that Algorithm~\ref{alg.bnd_const_update_barrier} has executed, i.e., the total number of outer and inner loops iterations.
		
		Figure~\ref{fig.non_deg_stop_test_harkerp2} presents the plot of $\log_{10}(\|x_{j, \AAA_s} - x_{*, \AAA_s}\|_{\infty})$ against the iteration number $j$ 
		on the left vertical axis, and the plot of $\log_{10}(\mu)$ against $j$ on the right vertical axis. Using the stopping test described in Theorem~\ref{thm.practical_termination_v2} the barrier parameter is decreased for the first time in iteration~$23$. Subsequently, it is decreased in every iteration until reaching its minimum value in iteration~$30$. 
		Thus, if the termination test in Theorem~\ref{thm.practical_termination_v2} is utilized, the algorithm stops in iteration~$31$, shown by the dashed vertical line, where $\|x_{j, \AAA_s} - x_{*, \AAA_s}\|_{\infty}$ is approximately $10^{-6}$. 
		However, if we continue running the algorithm,  $\|x_{j, \AAA_s} - x_{*, \AAA_s}\|_{\infty}$ decreases to $10^{-8}$, yielding a more accurate solution. Thus, the stopping condition in  Theorem~\ref{thm.practical_termination_v2} is satisfied in every iteration after being satisfied once, leading to early termination of the algorithm.

		\begin{figure}[ht]
			\centering
			\includegraphics[width=.75\textwidth,clip=true,trim=100 5 30 30]{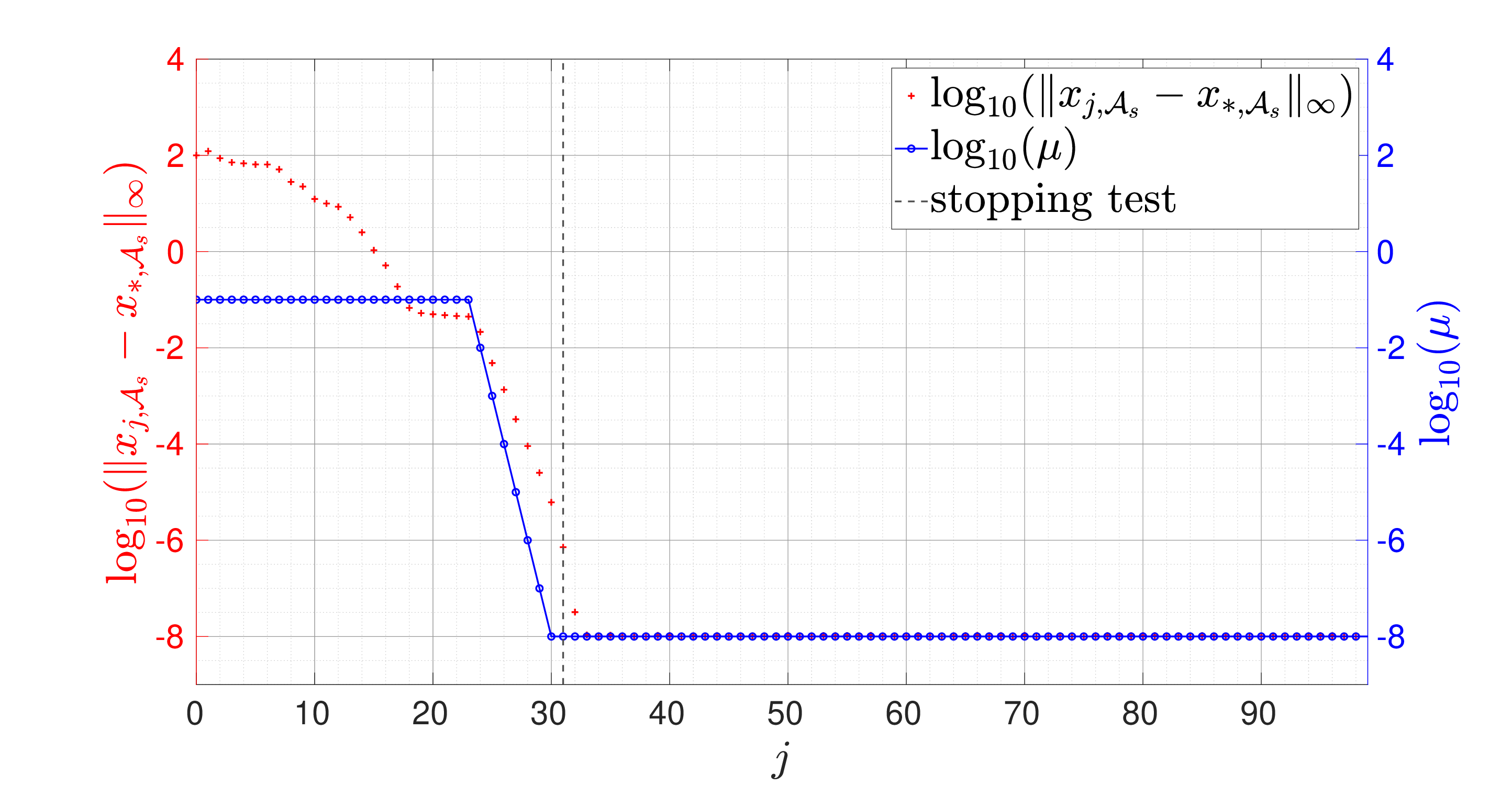}
			\caption{$\log_{10}(\|x_{j, \AAA_s} - x_{*, \AAA_s}\|_{\infty})$ and $\eps_f = 10^{-2}$, $\eps_g = \eps_H = 10^{-1}$ for \texttt{harkerp2} (stopping test)}
			\label{fig.non_deg_stop_test_harkerp2}
		\end{figure}

		To address this issue, we turn to the deterministic setting where the norm of the KKT residuals is used as the criterion for updating the barrier parameter, i.e., $\mu$ is decreased if $$\max\left\{\|\nabla f_k^\mu - z_k^\mu \|_{\infty}, \|X_k^\mu z_k^\mu - \mu e\|_{\infty} \right\}\leq \kappa_{\mu} \mu, $$ for a prescribed constant $\kappa_{\mu} \in \RR_{>0}$ \cite{wachter2006implementation}. Following the deterministic approach and by adopting the primal-dual point of view of interior-point methods,  we propose a heuristic that combines the stopping test described in Theorem~\eqref{thm.practical_termination_v2}
		with the norm of the perturbed complementarity condition, i.e., $\|Xz - \mu e \|_{\infty}$.
		We define the following two conditions, i.e.,  C1 and C2 
		
		\begin{itemize}
			\item [] C1.  $\|\nabla \tilde \varphi_k^{\mu} \|_{(\hat G_k^{\mu})^{-1}} \leq \max \left \{ T_{1, k}, T_{2, k} \right \} + \kappa_{\mu} \mu$,
			\item [] C2. $\|X_k^\mu z_k^\mu - \mu e \|_{\infty} \leq \kappa_{\mu} \mu$,
		\end{itemize}
		and the barrier parameter is decreased if these two conditions are satisfied with $\mu = \mu_\ell$. In other words, these two conditions serve as the stopping condition for the while loop in line~\ref{st.inner_while} of Algorithm~\ref{alg.bnd_const_update_barrier}.
		Observe that since $\nabla f_k^\mu - z_k^\mu \approx \nabla \varphi_k^\mu$, Condition C1 mimics $\|\nabla f(\xk) - z_k^\mu \| \leq \kappa_{\mu} \mu$ in the non-noisy setting, while Condition C2 is the perturbed complementarity condition, similar to the non-noisy setting.
		However, it is not clear that Condition C2 can always be satisfied since the noise might lead to erratic changes in $X_k^\mu z_k^\mu$.
		Hence, after satisfying  Condition C1, we impose a limit $N_{\mu}$ on the number of iterations to attempt to satisfy Condition C2 before proceeding to decrease the $\mu$.
		Using this heuristic, the issue that we observed in Figure~\ref{fig.non_deg_stop_test_harkerp2} is resolved as it is demonstrated in Figure~\ref{fig.non_deg_heuristic_harkerp2}. In comparison to Figure~\ref{fig.non_deg_stop_test_harkerp2}, Figure~\ref{fig.non_deg_heuristic_harkerp2} demonstrates a more gradual decrease in the barrier parameter. Using this heuristic the algorithm stops in iteration~$36$ where $\|x_{j, \AAA_s} - x_{*, \AAA_s}\|_{\infty}$ equals $10^{-8}$. Thus running the algorithm further does not improve the accuracy of the solution.
		\begin{figure}[ht]
			\centering
			\includegraphics[width=.75\textwidth,clip=true,trim=100 5 30 30]{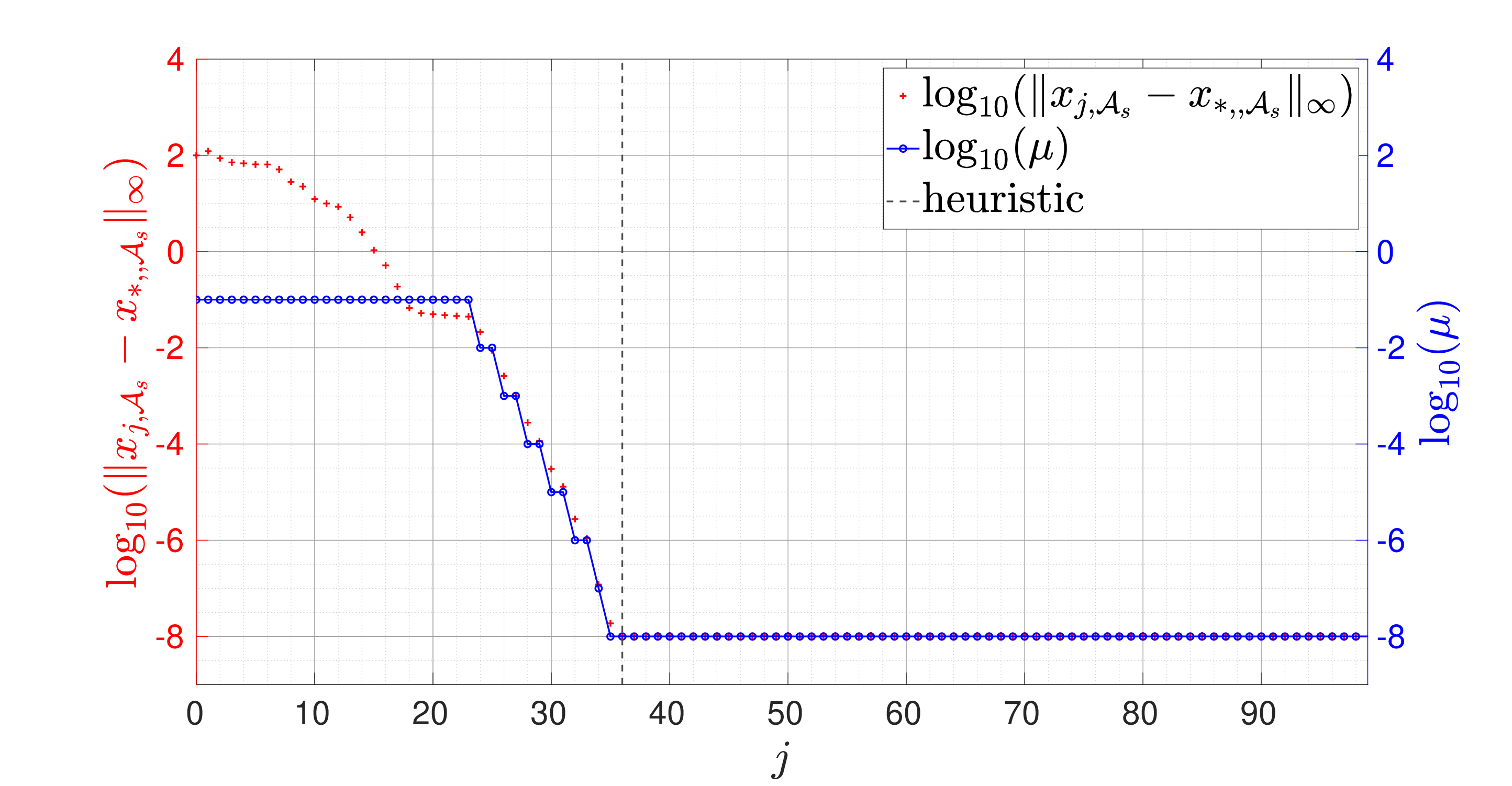}
			\caption{$\log_{10}(\|x_{j, \AAA_s} - x_{*, \AAA_s}\|_{\infty})$ and $\eps_f = 10^{-2}$, $\eps_g = \eps_H = 10^{-1}$ for \texttt{harkerp2} (heuristic)}
			\label{fig.non_deg_heuristic_harkerp2}
		\end{figure}

		Tables~\ref{tb.heuristic} and \ref{tb.heuristic_2} display the performance of different update heuristics for all problems, for two different noise levels.
		The barrier parameter is initialized with $\mu_0=0.1$ and decreased by a factor of 10 until it reaches the final target value $\bar\mu=10^{-7}$.
		\texttt{Heuristic} refers to the method that we outlined, i.e., satisfying conditions C1 and C2 and using $N_{\mu}= 10$
		as a safeguard to avoid getting stuck. (The maximum number of $N_{\mu}$ per barrier problem was never reached in these experiments.) 
		\texttt{Periodic} refers to a method that decreases the barrier parameter every $40$ iterations. The header \texttt{Stopping Test} refers to the stopping condition provided in Theorem~\ref{thm.practical_termination_v2} as the sole criterion for decreasing the barrier parameter (as in Figure~\ref{fig.non_deg_stop_test_harkerp2}).

		We observe that strongly active constraints are identified in all instances. In addition, for most problems, $\|\nabla \varphi_{\texttt{ter}}\|$'s are within a factor of at most 10 of the noise level. 
		For the \texttt{Periodic} method, which takes many more iterations than the other strategies, the values for $\|\nabla \varphi_{\texttt{ter}}\|$ have slightly smaller values compared to those of the other two methods, as one would expect. 
		The strategies \texttt{Heuristic} and \texttt{Stopping Test} perform very similarly in terms of the number of iterations taken and the final accuracy.
		The fact that \texttt{Stopping Test} does as well as \texttt{Heuristic} is an indication that for these instances, the barrier parameter can be reduced very quickly towards the end of the optimization, and the extra safeguard from Condition C2 might not be necessary in these cases.

		For some problems, we observed in Table~\ref{tb.heuristic} large values of the final (unscaled)
		norm of the barrier function gradient, reaching values exceeding $10^{3}$.  We attribute this observation to numerical issues but it deserves further investigation. 
		We also note that $\|\nabla \varphi_{\texttt{ter}}\|$ for \texttt{Periodic} is often equal to the gradient noise level $\epsilon_g$.  This can be explained by the specific way we generated the random perturbations that are added to obtain the noisy gradient, see Figure~\ref{fig.obs_ex2}.

			\begin{table}[ht]
				\caption{Barrier parameter update rule with $\eps_f = 10^{-4}$, and $\eps_g = \eps_H = 10^{-2}$. See Table~\ref{tb.stop_test_1} for the explanation of some of the headers.}
				\label{tb.heuristic}
				\centering
				\texttt{
					\resizebox{.9\textwidth}{!}{%
						\begin{tabular}
							{|c||c| c|c|c| c| c| c|c|c|    }
							\hline
							\rule{0pt}{15pt}
							\multirow{3}{*}{Problem} & \multicolumn{3}{c|}{Heuristic} & %
							\multicolumn{3}{c|}{Periodic} & \multicolumn{3}{c|}{Stopping Test}\\
							[5pt] \cline{2-10}
							\rule{0pt}{15pt}
							& ter & $\|x_{\text{ter}, \AAA_s}\|_{\infty}$ & $\|\nabla \varphi_{\text{ter}}\|$ &  ter & $\|x_{\text{ter}, \AAA_s}\|_{\infty}$ & $\|\nabla \varphi_{\text{ter}}\|$ & ter & $\|x_{\text{ter}, \AAA_s}\|_{\infty}$ & $\|\nabla \varphi_{\text{ter}}\|$  \\
							[5pt] \cline{1-10}
							\hline
							biggsb1 & 19 & NA & 1.26e-02 & 280 & NA & 1.05e-02 & 14 & NA & 1.64e-01 \\
							chenhark & 57 & 1.60e-07 & 8.35e-02 & 280 & 1.12e-07 & 1.16e-02 & 57 & 1.60e-07 & 8.35e-02 \\
							cvxbqp1 & 17 & 8.45e-08 & 1.32e-01 & 280 & 8.33e-08 & 1.00e-02 & 17 & 8.45e-08 & 1.32e-01 \\
							eg1 & 14 & 1.06e-07 & 1.60e-01 & 280 & 1.27e-07 & 1.00e-02 & 14 & 1.06e-07 & 1.60e-01 \\
							eigena & 35 & NA & 4.23e+03 & 280 & NA & 8.12e+03 & 31 & NA & 1.84e-01 \\
							explin & 27 & 1.30e-08 & 1.38e-01 & 280 & 1.30e-08 & 4.03e-02 & 27 & 1.30e-08 & 1.38e-01 \\
							explin2 & 27 & 1.03e-08 & 1.37e-01 & 280 & 1.03e-08 & 4.03e-02 & 27 & 1.03e-08 & 1.37e-01 \\
							expquad & 25 & 1.00e-08 & 4.30e-02 & 280 & 1.00e-08 & 1.00e-02 & 25 & 1.00e-08 & 4.30e-02 \\
							harkerp2 & 28 & 1.15e-07 & 8.72e-01 & 280 & 1.00e-07 & 9.99e-03 & 27 & 1.81e-07 & 3.20e+00 \\
							mccormck & 16 & 1.07e-07 & 1.00e-02 & 280 & 1.07e-07 & 1.00e-02 & 16 & 1.07e-07 & 1.00e-02 \\
							mdhole & 41 & 1.19e-07 & 1.53e-01 & 280 & 1.01e-07 & 1.00e-02 & 41 & 1.19e-07 & 1.53e-01 \\
							ncvxbqp1 & 65 & 1.92e-06 & 1.34e-01 & 280 & 1.98e-06 & 1.00e-02 & 65 & 1.92e-06 & 1.34e-01 \\
							ncvxbqp2 & 114 & 2.08e-06 & 2.42e-01 & 280 & 1.98e-06 & 1.00e-02 & 114 & 2.08e-06 & 2.42e-01 \\
							ncvxbqp3 & 165 & 3.23e-05 & 1.43e-01 & 280 & 2.96e-05 & 1.00e-02 & 163 & 5.75e-05 & 3.95e+03 \\
							nonscomp & 23 & NA & 1.62e-02 & 280 & NA & 1.03e-02 & 20 & NA & 1.22e-01 \\
							obstclal & 20 & 3.49e-06 & 1.07e-02 & 280 & 3.78e-06 & 1.00e-02 & 19 & 4.60e-06 & 1.23e-01 \\
							obstclbl & 19 & 3.90e-06 & 1.27e-01 & 280 & 4.61e-06 & 1.00e-02 & 19 & 3.90e-06 & 1.27e-01 \\
							obstclbu & 19 & 3.90e-06 & 1.27e-01 & 280 & 4.61e-06 & 1.00e-02 & 19 & 3.90e-06 & 1.27e-01 \\
							pentdi & 24 & 4.02e-07 & 1.17e-02 & 280 & 4.02e-07 & 1.02e-02 & 23 & 3.92e-07 & 9.94e-02 \\
							qrtquad & 28 & 1.01e-08 & 2.70e-02 & 280 & 1.01e-08 & 1.00e-02 & 28 & 1.01e-08 & 2.70e-02 \\
							qudlin & 36 & 1.00e-08 & 1.46e+00 & 280 & 1.00e-08 & 5.38e-02 & 29 & 1.00e-08 & 1.26e-01 \\
							sim2bqp & 14 & 7.92e-08 & 2.62e-01 & 280 & 1.01e-07 & 9.98e-03 & 14 & 7.92e-08 & 2.62e-01 \\
							\hline
				\end{tabular}}}
			\end{table}

			\begin{table}[ht]
				\caption{Barrier parameter update rule with $\eps_f = 10^{-6}$, and $\eps_g = \eps_H = 10^{-3}$. See Table~\ref{tb.stop_test_1} for the explanation of some of the headers.}
				\label{tb.heuristic_2}
				\centering
				\texttt{
					\resizebox{.9\textwidth}{!}{%
						\begin{tabular}
							{|c||c| c|c|c| c| c| c|c|c|    }
							\hline
							\rule{0pt}{15pt}
							\multirow{3}{*}{Problem} & \multicolumn{3}{c|}{Heuristic} & %
							\multicolumn{3}{c|}{Periodic} & \multicolumn{3}{c|}{Stopping Test}\\
							[5pt] \cline{2-10}
							\rule{0pt}{15pt}
							& ter & $\|x_{\text{ter}, \AAA_s}\|_{\infty}$ & $\|\nabla \varphi_{\text{ter}}\|$ &  ter & $\|x_{\text{ter}, \AAA_s}\|_{\infty}$ & $\|\nabla \varphi_{\text{ter}}\|$ & ter & $\|x_{\text{ter}, \AAA_s}\|_{\infty}$ & $\|\nabla \varphi_{\text{ter}}\|$  \\
							[5pt] \cline{1-10}
							\hline
							biggsb1 & 22 & NA & 1.23e-03 & 280 & NA & 5.62e-04 & 18 & NA & 2.49e+00 \\
							chenhark & 323 & 1.70e-07 & 1.93e-02 & 280 & 1.15e-07 & 1.67e-03 & 323 & 1.70e-07 & 1.93e-02 \\
							cvxbqp1 & 22 & 8.33e-08 & 1.00e-03 & 280 & 8.33e-08 & 1.00e-03 & 22 & 8.33e-08 & 1.00e-03 \\
							eg1 & 17 & 1.28e-07 & 2.26e-03 & 280 & 1.28e-07 & 1.00e-03 & 17 & 1.28e-07 & 2.26e-03 \\
							eigena & 39 & NA & 1.63e-02 & 280 & NA & 1.00e-03 & 38 & NA & 5.49e-02 \\
							explin & 29 & 1.30e-08 & 3.88e-02 & 280 & 1.30e-08 & 3.88e-02 & 29 & 1.30e-08 & 3.88e-02 \\
							explin2 & 30 & 1.03e-08 & 3.88e-02 & 280 & 1.03e-08 & 3.88e-02 & 30 & 1.03e-08 & 3.88e-02 \\
							expquad & 29 & 1.00e-08 & 4.50e-03 & 280 & 1.00e-08 & 1.00e-03 & 29 & 1.00e-08 & 4.50e-03 \\
							harkerp2 & 33 & 1.00e-07 & 1.09e-03 & 280 & 1.00e-07 & 1.00e-03 & 33 & 1.00e-07 & 1.09e-03 \\
							mccormck & 19 & 1.07e-07 & 1.04e-03 & 280 & 1.07e-07 & 1.00e-03 & 19 & 1.07e-07 & 1.04e-03 \\
							mdhole & 40 & 9.83e-08 & 1.65e-02 & 280 & 1.00e-07 & 1.00e-03 & 40 & 9.83e-08 & 1.65e-02 \\
							ncvxbqp1 & 65 & 1.97e-06 & 1.12e-03 & 280 & 1.97e-06 & 1.16e-03 & 65 & 1.97e-06 & 1.12e-03 \\
							ncvxbqp2 & 122 & 1.97e-06 & 1.07e-03 & 280 & 1.97e-06 & 1.06e-03 & 122 & 1.97e-06 & 1.07e-03 \\
							ncvxbqp3 & 163 & 2.78e-05 & 1.03e-03 & 280 & 2.81e-05 & 1.04e-03 & 163 & 2.78e-05 & 1.03e-03 \\
							nonscomp & 26 & NA & 1.26e-02 & 280 & NA & 9.99e-04 & 22 & NA & 7.15e-02 \\
							obstclal & 24 & 3.56e-06 & 1.00e-03 & 280 & 3.56e-06 & 1.00e-03 & 24 & 3.56e-06 & 1.00e-03 \\
							obstclbl & 23 & 4.34e-06 & 9.98e-04 & 280 & 4.36e-06 & 1.00e-03 & 23 & 4.34e-06 & 9.98e-04 \\
							obstclbu & 23 & 4.34e-06 & 9.98e-04 & 280 & 4.36e-06 & 1.00e-03 & 23 & 4.34e-06 & 9.98e-04 \\
							pentdi & 31 & 4.01e-07 & 1.07e-03 & 280 & 4.01e-07 & 9.99e-04 & 31 & 4.01e-07 & 1.07e-03 \\
							qrtquad & 30 & 1.01e-08 & 3.19e-03 & 280 & 1.01e-08 & 1.00e-03 & 30 & 1.01e-08 & 3.19e-03 \\
							qudlin & 44 & 9.99e-09 & 4.84e-02 & 280 & 1.00e-08 & 1.68e-01 & 37 & 9.67e-07 & 1.83e+03 \\
							sim2bqp & 18 & 9.91e-08 & 9.24e-03 & 280 & 1.00e-07 & 1.00e-03 & 18 & 9.91e-08 & 9.24e-03 \\
							\hline
				\end{tabular}}}
			\end{table}

\end{document}